\numberwithin{equation}{section}
\newcommand{\bw}{\mathbf{w}}
\newcommand{\bx}{\mathbf{x}}
\newcommand{\CI}{
\begin{tikzpicture}
\draw  (0,0) -- (1.5ex,0) -- (0.75ex,1.5ex) --(0,0);
\end{tikzpicture}}
\newcommand{\CII}{
\begin{tikzpicture}
\draw  (0,0) -- (1.5ex,0) -- (1.5ex,1.5ex) --(0,1.5ex)--(0,0);
\end{tikzpicture}
}
\newcommand{\CIII}{
\begin{tikzpicture}
\draw  (0,0) -- (1.5ex,0) -- (1.5ex,1.5ex) --(0,1.5ex)--(0,0);
\draw (0,0)-- (1.5ex,1.5ex);
\end{tikzpicture}
}
\theoremstyle{plain}
\newtheorem{theorem}{Theorem}[section]
\newtheorem{proposition}[theorem]{Proposition}
\newtheorem{lemma}[theorem]{Lemma}
\newtheorem{corollary}[theorem]{Corollary}
\newtheorem{claim}[theorem]{Claim}
\newtheorem{observation}[theorem]{Observation}
\theoremstyle{definition}
\newtheorem{definition}[theorem]{Definition}
\newtheorem{algorithm}[theorem]{Algorithm}
\newtheorem{example}[theorem]{Example}
\begin{document}
\title{On toric ideals arising from signed graphs}
\author{JiSun Huh${}^{a}$, \  Sangwook Kim${}^{b}$  \ and Boram Park${}^{a}$.\\[1.5ex]
\small ${}^{a}$ Department of Mathematics, Ajou University, Suwon 16499, Republic of Korea\\
\small ${}^{b}$ Department of Mathematics, Chonnam National University, Gwangju 61186, Republic of Korea}

\maketitle
\date{}

\vspace{-0.5cm}
\begin{abstract}
\small
A \textit{signed graph} is a pair $(G,\tau)$ of a graph $G$ and its sign $\tau$, where
a \textit{sign} $\tau$ is a function from $\{ (e,v)\mid e\in E(G),v\in V(G), v\in e\}$ to $\{1,-1\}$. Note that graphs or digraphs are special cases of signed graphs.
In this paper, we study the toric ideal $I_{(G,\tau)}$ associated with a signed graph $(G,\tau)$, and the results of the paper give a unified idea to explain some known results on the toric ideals of a graph or a digraph.
We characterize all primitive binomials of $I_{(G,\tau)}$, and then
focus on the complete intersection property.
More precisely, we find a complete list of graphs $G$ such that $I_{(G,\tau)}$ is a complete intersection for every sign $\tau$.
\end{abstract}

\noindent\textbf{Keywords} Signed graph; Toric ideal; Primitive element; Complete intersection\\

\noindent\textbf{MSC2010}	14M25, 05C22,  05C25

\section{Introduction}
Throughout the paper, a graph means a finite simple graph. A finite graph allowed to have a multiple edge or a loop is called a multigraph. For a graph $G$, we set $V(G)=\{v_1,\ldots, v_n\}$, $E(G)=\{e_1,\ldots,e_m\}$ and $\mathbb{e}=(e_1,\ldots,e_m)$ unless otherwise specified.
For a positive integer $n$, we denote $\{1,\ldots,n\}$ by $[n]$. For an integer vector $\mathbb{b}$, $\mathbb{b}^+$ (resp. $\mathbb{b}^-$) means the vector whose $i$th entry is $\max\{b_i,0\}$ (resp. $-\min\{b_i,0\}$).
For an integer vector $\mathbb{x}=(x_1,\ldots,x_m)$,
$\mathbb{e}^{\mathbb{x}}$ means a monomial $e_1^{x_1}e_2^{x_2}\cdots e_m^{x_m}$.\footnote{Throughout the paper, to denote a vector, we use $\mathbb{a}$, $\mathbb{b}$, $\mathbb{c}$, etc. The standard bold type letters ($\mathbf{a}$, $\mathbf{b}$, $\mathbf{c}$, etc) are for walks in a graph.}

Let $K[e_1,\ldots,e_m]$ be a polynomial ring in $m$ variables over a field $K$. For  an $n\times m$ integer matrix $A$
 without zero columns,
the ideal
\[ I_A=\left< \mathbb{e}^{\mathbb{b}^+} -
\mathbb{e}^{\mathbb{b}^-} \in K[e_1,\ldots,e_m] \mid \mathbb{b}\in\mathbb{Z}^m \text{ and } A\mathbb{b}=\mathbb{0}  \right>\]
is called the \textit{toric ideal} associated with $A$. It is well-known that a toric ideal  is a prime binomial  ideal.
For more details about toric ideals and related topics, see \cite{S1996, Hibi}.

A (homogeneous) toric ideal not only defines a projective toric variety (see \cite{Fulton,S2002}), but also provides wide applications in other areas, such as algebraic statistics, dynamical system, hypergeometric differential equations, toric geometry, graph theory, and so on, see \cite{ES1996, S1996, ES2004}.
Toric ideals arising from various kinds of combinatorial objects have been widely studied by many researchers, see  \cite{kashiwabara2010toric, lason2014toric,PS2014,ohsugi2017grobner} for some recent results.
Especially, the toric ideal of a graph or a digraph, which is the toric ideal associated with its vertex-edge incidence matrix, has been an interesting topic (see \cite{ohsugi1999koszul,ohsugi1999toric, RTT2012minimal,BGR2015,biermann2017bounds,gap2015,GRV2013CI,reyes2005complete,gitler2017cio}).

A major line of research on toric ideal arising from a combinatorial object focuses on a `special' set of binomials of the ideal (giving a combinatorial interpretation). Among them, the set of primitive binomials, which is known to form the Graver basis, was studied widely related to a problem initiated by Sturmfels, called true degree problem. (See \cite{S1996,TT2015,T2011universal,R2017bound} for detail.)
For a toric ideal $I_A$,
an irreducible binomial $B=B^+-B^-$ of $I_A$ is \textit{primitive} if there exists no other binomial  $B_0=B_0^+-B_0^-$ such that  $B_0^+|B^+$ and
$B_0^-|B^-$.
For the toric ideal of a graph, the primitive binomials and some other important binomials were characterized in \cite{RTT2012minimal}.
The primitive binomials of the toric ideal of a digraph are nicely stated in \cite{gitler2010ring,GRV2013CI}. See Subsection~\ref{subsubsec:primitive} for the primitive binomials of the toric ideal of a graph/digraph.

Another important  research direction on a toric ideal is about the complete intersection property.
A toric ideal $I_A$ associated with an $n\times m$ integer matrix $A$  has the height $\textrm{ht}(I_ A ) = m- \textrm{rank}(A)$.  We say $I_A$ is a \textit{complete intersection} if it is generated by $\textrm{ht}(I_A)$ elements (see \cite{S1996}).
A complete intersection toric ideal was first studied by Herzog in \cite{H1970}, and it is known that the Hilbert series of the corresponding quotient ring $R/I$ can be computed easily when $I$ is a complete intersection.
The complete intersection property of the toric ideal from a combinatorial object was also investigated by many researchers, see \cite{BG2015,MT2005,reyes2005complete,BGR2015,GRV2013CI,gitler2017cio,gitler2010ring,TT2013,M2019}. We summarize some known results on the toric ideals of graphs/digraphs in Subsection~\ref{subsubsec:CI}.

In this paper, we consider toric ideals of signed graphs, as a generalization of graphs and digraphs.
An \textit{incidence} of a graph $G$ is a pair $(e,v)$ of an edge $e$ and a vertex $v$ such that $v$ is an endpoint of $e$.
A \textit{sign} $\tau$ of $G$ is a function from the set of all incidences to the set $\{1,-1\}$, and a \textit{signed graph} is a pair $(G,\tau)$ of a graph $G$ and its sign $\tau$.
For a signed graph $(G,\tau)$ with $n$ vertices and $m$ edges, the \textit{incidence matrix} $A(G,\tau)$ of $(G,\tau)$ is an $n\times m$ matrix whose rows are labeled by the vertices $v_1, \ldots, v_n$ and columns are labeled by the edges $e_1, \ldots, e_m$ such that $[A(G,\tau)]_{ij}=\tau(e_j,v_i)$ if $v_i$ is incident to $e_j$, and $[A(G,\tau)]_{ij}=0$ otherwise. With an abuse of notation, we often consider
the codomain of $\tau$ is $\{+,-\}$. See Figure~\ref{fig:signed} for an example.
\begin{figure}[ht!]
  \centering
  \includegraphics[width=10.5cm,page=1]{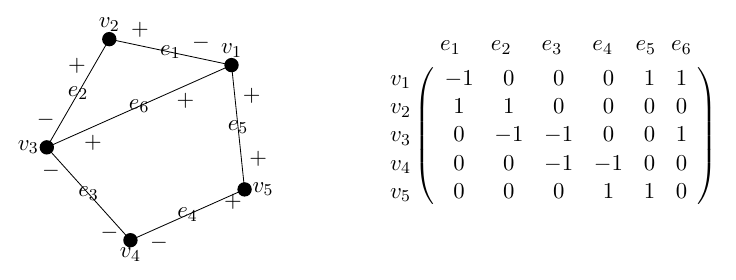}\\
  \caption{A signed graph $(G,\tau)$ and its incidence matrix $A(G,\tau)$}\label{fig:signed}
\end{figure}
We remark that if a sign $\tau$ is a constant function, then $(G,\tau)$ is just a graph. If a sign $\tau$ satisfies that $\tau(e,u) \tau(e,v)=-1$ for each edge $e=uv$, then $(G,\tau)$ is equal to a digraph. The \textit{toric ideal of a signed graph} $(G,\tau)$, denoted by $I_{(G,\tau)}$, is the toric ideal associated with the incidence matrix $A(G,\tau)$ in the polynomial ring $K[e_1,\ldots,e_m]$ over a field $K$.

As long as the authors are aware, the toric ideal of a signed graph is firstly considered in this paper, and so we start our research from a fundamental question on generators. We completely characterize the primitive binomials of $I_{(G,\tau)}$ of a signed graph $(G,\tau)$ with graph theory language. This gives a way to explain the previous results on graphs/digraphs in a unified idea.
The latter part of the paper sheds light on the complete intersection property  of  $I_{(G,\tau)}$. We give a necessary and sufficient condition for a  graph $G$ to have a complete intersection $I_{(G,\tau)}$ for every sign $\tau$, see Theorems~\ref{thm:characterization:CI}~and~\ref{thm:cis:general:graph}.
We emphasize that this result is more than unifying the results of graphs/digraphs in \cite{GRV2013CI, BGR2015}, since there are infinitely many graphs $G$ such that the toric ideal of $G$ and its every orientation are complete intersections but $I_{(G,\tau)}$ is not a complete intersection for some sign $\tau$ (see Section~3).
 Lastly, we find a full list of such graphs without assuming $2$-connectedness, see Theorem~\ref{thm:characterization:CI}.

\section{Preliminaries}\label{sec:prel}
This section gives some basic notion and terminology in graphs, and then summarizes some known results on toric ideals of graphs and digraphs. In addition, we explain how to define binomials from walks in a signed graph, which generate $I_{(G,\tau)}$.

\subsection{Basic notion for walks in a graph }\label{subsec:basic:graph:walks}
For a graph $G$, let $\mathbf{w}: v_{i_1}e_{j_1} \cdots e_{j_t}v_{i_{t+1}}$ be a walk or $(v_{i_1},v_{i_{t+1}})$-walk,
which is an alternating sequence  of vertices $v_i$'s and edges $e_j$'s where $e_{j_{\ell}}=v_{i_{\ell}}v_{i_{\ell+1}}$ for each $\ell \in[t]$.
We call $v_{i_{\ell}}$ (resp. $e_{j_{\ell}}$)  the $\ell$th vertex (resp. edge) term of $\bw$.
A vertex term is said to be \textit{internal} if it is neither first nor last.
We let $V(\bw)$ be the set of vertex terms of $\bw$ and $E(\bw)$ be the multiset of the edge terms in $\bw$. We denote the multigraph with the vertex  set $V(\bw)$ and the edge set $E(\bw)$ by $[\bw]$. The underlying simple graph of $[\bw]$ is a subgraph of $G$, but $[\bw]$ may not be a subgraph of $G$ by multiple edges.
See Figure~\ref{fig:walk}.

\begin{figure}[ht!]
  \centering
  \includegraphics[width=14cm,page=2]{all_figures.pdf}\\\caption{A walk $\bw$ in a graph $G$ and $V(\bw)$, $E(\bw)$, and $[\bw]$}\label{fig:walk}
\end{figure}

The \textit{length}  of $\bw$ is the number of edge terms in $\bw$.
A \textit{subwalk} of $\bw$ is a subsequence of $\bw$ which is a walk, and
a \textit{section} is a subwalk consisting of consecutive terms of $\bw$.
When we consider a subwalk or a section of a closed walk $\bw$, the terms are considered cyclically so that the last edge term is consecutive to the first vertex term.

For two walks $\bw$ and $\bw'$ in a graph, if the last vertex term of $\bw$ and the first vertex term of $\bw'$ are equal, then we denote by $\bw+\bw'$ the walk going through $\bw$ and then $\bw'$.
If $\bw_0, \ldots, \bw_k$ are sections of a walk $\bw$ such that
 $\bw=\bw_0+\cdots+\bw_k$, then we call this form a \textit{section-decomposition} of $\bw$.
If every $\bw_i$ is a nontrivial walk, then we say it is nontrivial.

The walk obtained by reading a walk $\bw$ in the reverse order is denoted by $\bw^{-1}$.
For a closed walk $\bw: v_{i_1}e_{j_1}\cdots e_{j_t}v_{i_1}$ $(t\ge2)$, we say a vertex $v\in V(\bw)$ is \textit{repeated} if $v$ appears in $v_{i_1}e_{j_1}v_{i_2}\cdots v_{i_t}$ ($e_{j_t}v_{i_1}$ is deleted from $\bw$) at least two times.
Note that if a closed walk $\bw$ has a repeated vertex $v\in V(\bw)$,
then $\bw$ has a nontrivial section-decomposition $\bw_0+\bw_1$ for some closed walks $\bw_0$ and $\bw_1$ (whose first vertex terms are $v$).

\subsection{Toric ideals of graphs and digraphs}\label{subsec:known:graphs:digraphs}

 Recall that the \textit{toric ideal} $I_G$ of a graph (resp. digraph) $G$  is the toric ideal  $I_{(G,\tau)}$ when $\tau$ is a constant function (resp. $\tau(e,u)\tau(e,v)=-1$ for every edge $e=uv$).
In this section we summarize some previous results on the toric ideals of graphs/digraphs, which will be used in this paper.

\subsubsection{Primitive binomials}\label{subsubsec:primitive}
Let $A$ be an $n\times m$ matrix without zero columns.
An irreducible binomial of $I_A$ has a form of $\mathbb{e}^{\mathbb{b}^+}-\mathbb{e}^{\mathbb{b}^-}$ for some  $\mathbb{b}\in\mathbb{Z}^{m}$ with $A\mathbb{b}=\mathbb{0}$.
An irreducible binomial $\mathbb{e}^{\mathbb{b}^+}-\mathbb{e}^{\mathbb{b}^-}$ is called \textit{primitive} if there exists no other binomial
$\mathbb{e}^{\mathbb{c}^+}-\mathbb{e}^{\mathbb{c}^-}$ such that
$\mathbb{e}^{\mathbb{c}^+}| \mathbb{e}^{\mathbb{b}^+}$ and
$\mathbb{e}^{\mathbb{c}^-}|\mathbb{e}^{\mathbb{b}^-}$.

For a closed nontrivial walk $\bw: v_{i_1}e_{j_1}\cdots v_{i_{2t}}e_{j_{2t}}v_{i_1}$ of even length in a graph $G$,  let $B_{\bw}=B^+-B^-$, where $B^+=e_{j_1}e_{j_3}\cdots e_{j_{2t-1}}$ and $B^-=e_{j_2}e_{j_4}\cdots e_{j_{2t}}$.
Here, the same closed walk can be written in different ways but associated binomials differ only in the sign.
It is  observed that (see \cite{Ree}) $I_G$ is generated by those binomials $B_{\bw}$. A necessary condition for the primitive binomials was firstly studied in \cite{ohsugi1999toric} and a necessary and sufficient condition was established in \cite{RTT2012minimal} as follows. When two graphs $G$ and $G'$  contain cliques $K$ and $K'$ of size $k$, respectively,
a graph obtained from $G$ and $G'$ by identifying  $K$ and $K'$ is called a $k$-\textit{clique sum} of $G$ and $G'$.

\begin{theorem}
[{\cite[Theorem 3.2]{RTT2012minimal}}]\label{thm:graph:primitive}
For a closed walk $\bw$ of even length in a graph, the binomial $B_\bw$ is primitive if and only if the following hold:
\begin{itemize}
\item[\rm(i)]
The multigraph $[\bw]$ is constructed by $1$-clique sums of cycles of length at least two such that  every vertex of $[\bw]$ belongs to at most two cycles.
\item[\rm(ii)]
For every nontrivial section-decomposition $\bw=\bw_0+\bw_1$ into two closed walks $\bw_0$ and $\bw_1$, the length of each $\bw_i$ is odd.
\end{itemize}
\end{theorem}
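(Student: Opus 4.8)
The plan is to translate everything into the vector $\mathbb{b}=\mathbb{b}^+-\mathbb{b}^-$ attached to $\bw$, where for each edge $e$ the entry $\mathbb{b}^+_e$ (resp. $\mathbb{b}^-_e$) counts the occurrences of $e$ in an odd (resp. even) position term of $\bw$. Because consecutive edge terms of a closed walk share a vertex, the two edge terms at every occurrence of a vertex (read cyclically) sit in positions of opposite parity; summing over a vertex gives $A\mathbb{b}=\mathbb{0}$, so $B_\bw\in I_G$, and $B_\bw$ is irreducible exactly when every edge occurs in a single parity, i.e. $\gcd(B^+,B^-)=1$. In this language ``$B_\bw$ is primitive'' means ``$\mathbb{b}$ is a primitive point of $\ker A$'': there is no $\mathbb{c}\in\ker A$, $\mathbb{c}\notin\{\mathbb{0},\mathbb{b}\}$, with $\mathbb{c}^+\le\mathbb{b}^+$ and $\mathbb{c}^-\le\mathbb{b}^-$ componentwise. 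I will prove the two implications with this reformulation in hand, using throughout that $\bw$ is an Eulerian circuit of $[\bw]$, so every vertex of $[\bw]$ has even degree and $[\bw]$ has no bridges.

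For necessity, (ii) is immediate: in a nontrivial section-decomposition $\bw=\bw_0+\bw_1$ into closed walks with $|\bw_0|$ even, the walk $\bw_1$ starts at an odd position, so position-parity is preserved and $B^+=B^+_{\bw_0}B^+_{\bw_1}$, $B^-=B^-_{\bw_0}B^-_{\bw_1}$; whichever of $B_{\bw_0},B_{\bw_1}$ is nonzero is then a proper factor, contradicting primitivity, so both pieces are odd. For (i) I look at a repeated vertex $v$ and its excursions $E_1,\dots,E_r$ (the closed sections of $\bw$ between consecutive visits to $v$). A single even excursion is a separable even closed walk, hence a proper factor, so all excursions are odd; and if $r\ge3$ I may fuse two consecutive (odd) excursions into an even closed section whose complement is nonempty, again a proper factor --- so $r\le2$ at every vertex. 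Next, each block of $[\bw]$ is a cycle: a $2$-connected block that is not a cycle contains two vertices joined by three internally disjoint paths, and I intend to use a parity count on these paths to exhibit a sign-compatible even closed walk that properly divides $B_\bw$, a contradiction (a length-$2$ block is an edge traversed twice in equal parity, which is allowed). Finally, since every block is a cycle and each cycle through $v$ accounts for exactly one visit to $v$, the bound $r\le2$ yields at most two cycles through any vertex, completing (i).

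For sufficiency I assume (i) and (ii) and suppose some $\mathbb{c}\in\ker A$ is sign-compatible with $\mathbb{b}$ and lies outside $\{\mathbb{0},\mathbb{b}\}$. Its support is contained in the support $S$ of $\mathbb{b}$, the underlying simple graph of $[\bw]$, which by (i) is a cactus whose blocks are the cycles of $[\bw]$ (a length-$2$ block collapsing to a single edge carrying multiplicity $\mathbb{b}_e=\pm2$). On such a graph $\ker A$ is generated by the alternating $\pm1$ vectors of the \emph{even} cycles together with sign vectors tied to pairs of odd cycles, so it is enough to rule out each even-cycle vector. This is where (ii) does the work: since (ii) forces every leaf cycle --- a \emph{contiguous} section of $\bw$ --- to be odd, every even cycle is internal, hence is traversed by the Eulerian circuit in two arcs separated by an odd-length detour; the position-parity along the cycle therefore flips between the two arcs, so the cycle's alternating vector agrees with $\mathbb{b}$ on one arc but has the opposite sign on the other and cannot satisfy $\mathbb{c}^+\le\mathbb{b}^+$, $\mathbb{c}^-\le\mathbb{b}^-$. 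With the even cycles excluded, no sign-compatible point other than $\mathbb{0}$ and $\mathbb{b}$ survives, so $B_\bw$ is primitive.

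The step I expect to be the main obstacle is the kernel bookkeeping in the sufficiency direction for a general cactus chain: a high-order cut vertex is visited several times and an admissible $\mathbb{c}$ could in principle activate several blocks simultaneously, so one must track the exact position-parities that the Eulerian circuit induces on each pass through each block and show that the oddness guaranteed by (ii) forces total cancellation down to $\mathbb{0}$ or $\mathbb{b}$. A secondary difficulty, on the necessity side, is making the non-cycle-block argument rigorous: one has to confirm that the parity count on the three internally disjoint paths (allowing length-one paths for parallel edges, and excluding high edge-multiplicities) always yields an even closed walk that is genuinely \emph{sign-compatible} with $\mathbb{b}$, not merely an even cycle in $S$ --- since, as the internal even cycles above show, an even closed walk need not divide $B_\bw$.
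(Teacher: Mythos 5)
You have the right skeleton, and parts of it do match the paper's argument (the paper proves the signed generalization, Theorem~\ref{thm:signed:primitive}, of which this statement is the constant-sign special case): your necessity of (ii) via the factorization $B^{\pm}=B^{\pm}_{\bw_0}B^{\pm}_{\bw_1}$ is exactly the paper's Claim~\ref{lem:primitive:no_subwalk} via Lemma~\ref{lem:product}, and your excursion argument that each vertex is visited at most twice matches the first half of Claim~\ref{claim:repeat:cutvertex}. But the proposal has a genuine gap on the necessity side at precisely the point you flag: you never prove that every block of $[\bw]$ is a cycle, only announce a parity count on three internally disjoint paths, and that plan cannot work as stated, because an even closed walk in $[\bw]$ need not be sign-compatible with $\mathbb{b}$ --- your own discussion of internal even cycles exhibits the obstruction. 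Note also that your per-vertex bound $r\le 2$ does not exclude interleaved patterns in which two vertices alternate $u\ldots v\ldots u\ldots v$ along $\bw$ with neither becoming a cut vertex, which is exactly the situation where a block fails to be a cycle. The missing idea is the paper's Claim~\ref{lem:primitive:no_repetition}: in any nontrivial section-decomposition $\bw=\bw_0+\bw_1+\bw_2+\bw_3$, the walks $\bw_0+\bw_1$ and $\bw_1+\bw_2$ cannot both be closed. This is proved by manipulating the walk, not subgraphs of $[\bw]$: the reversal lemma (Lemma~\ref{lem:twowalks:reverse}, $B_{\bw^{-1}+\bw'}=\pm B_{\bw+\bw'}$ for odd closed walks) shows $\bw_1^{-1}+\bw_0^{-1}+\bw_2+\bw_3$ carries the same (hence primitive) binomial, so both $\bw_3+\bw_0$ and $\bw_3+\bw_1^{-1}$ are odd proper closed sections, and Lemma~\ref{lem:twowalks:basic} then makes $\bw_0+\bw_1$ an even proper closed section, a contradiction. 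With interleaving excluded, repeated vertices are cut vertices and blocks are cycles, with no theta analysis at all.

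The sufficiency direction also does not close. The reduction ``it is enough to rule out each even-cycle vector'' is a non sequitur: a sign-compatible $\mathbb{c}\in\ker A$ need not be one of your generators, and sign-compatibility of $\mathbb{c}$ is not inherited by the summands of an arbitrary integer combination of generators; you name this problem (``kernel bookkeeping'') but do not resolve it. The paper's resolution is Proposition~\ref{rmk:basic:binomial}: for \emph{any} $\mathbb{c}\in\ker A$, every connected component of the multigraph $G_{\mathbb{c}}$ carries an even Eulerian closed walk. Applied to a sign-compatible $\mathbb{c}$, for which $G_{\mathbb{c}}$ is a submultigraph of $G_{\mathbb{b}}=[\bw]$, condition (i) forces $G_{\mathbb{c}}$ --- and likewise $G_{\mathbb{d}}$ with $\mathbb{d}=\mathbb{b}-\mathbb{c}$ --- to be a union of whole cycle-blocks of $[\bw]$; a pendant such component in the block tree hangs at a single cut vertex and is therefore traversed by $\bw$ as one contiguous closed section, which is even (in the graph case evenness is just the parity of the number of edges, so it is determined by the component, not the traversal), contradicting (ii). That decomposition step is what replaces your generator bookkeeping and handles the case of several blocks activated simultaneously; your even-cycle exclusion via the odd detour forced by (ii) is correct as far as it goes, but without the Eulerian-decomposition argument (or an equivalent) it only rules out generators, not general sign-compatible kernel points. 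So both halves of the proposal are genuinely incomplete, and in both cases the paper's proof supplies a tool --- the reversal/interleaving claim and the Eulerian decomposition of kernel vectors --- that your outline lacks.
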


In \cite{RTT2012minimal},  other important sets of binomials in $I_G$ were also characterized and we omit them here as it is not related to our main purposes.

Let $G$ be a graph and $D$ be its orientation. The primitive binomials of $I_D$ are much more simply described. For every cycle $\bw$
of $G$, we define $B_{\bw}=B^+-B^-$, where $B^+$ is the product of the clockwise oriented edges  and  $B^-$ is the product of the other edges.

\begin{theorem}
[{\cite[Proposition 2]{GRV2013CI}}]\label{thm:digraph:primitive}
For a graph $G$, let $D$ be its orientation.
The primitive binomials of $I_D$ are binomials $B_{\bw}$ associated with cycles $\bw$ of $G$.
\end{theorem}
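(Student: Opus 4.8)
The plan is to translate the statement into the language of the integer kernel $\ker_{\mathbb Z}A$ of the incidence matrix $A=A(D)$ and to exploit that, for an orientation, this kernel is exactly the (integer) cycle space of $G$. Fixing a traversal of a cycle $\bw$ of $G$, I associate with it the vector $\mathbb{b}_{\bw}\in\mathbb{Z}^m$ whose entry on an edge $e\in E(\bw)$ is $+1$ if the orientation of $e$ in $D$ agrees with the traversal and $-1$ otherwise, and $0$ on the remaining edges; then $A\mathbb{b}_{\bw}=\mathbb{0}$ and $B_{\bw}=\mathbb{e}^{\mathbb{b}_{\bw}^+}-\mathbb{e}^{\mathbb{b}_{\bw}^-}$. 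Conversely, every irreducible binomial of $I_D$ is $\mathbb{e}^{\mathbb{b}^+}-\mathbb{e}^{\mathbb{b}^-}$ for some $\mathbb{b}\in\ker_{\mathbb Z}A$. Thus it suffices to prove two things: (a) each $\mathbb{b}_{\bw}$ is primitive, and (b) every primitive $\mathbb{b}$ equals some $\mathbb{b}_{\bw}$.

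For (a), suppose a binomial $\mathbb{e}^{\mathbb{c}^+}-\mathbb{e}^{\mathbb{c}^-}$ with $\mathbb{c}\in\ker_{\mathbb Z}A$ satisfies $\mathbb{e}^{\mathbb{c}^+}\mid\mathbb{e}^{\mathbb{b}_{\bw}^+}$ and $\mathbb{e}^{\mathbb{c}^-}\mid\mathbb{e}^{\mathbb{b}_{\bw}^-}$. Because every entry of $\mathbb{b}_{\bw}$ lies in $\{-1,0,1\}$, the divisibility forces every entry of $\mathbb{c}$ to lie in $\{-1,0,1\}$ with the same sign as the corresponding entry of $\mathbb{b}_{\bw}$, and $\operatorname{supp}(\mathbb{c})\subseteq E(\bw)$. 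A proper nonempty subset of the edge set of a simple cycle is a forest and carries no nonzero element of the cycle space; hence either $\mathbb{c}=\mathbb{0}$ or $\operatorname{supp}(\mathbb{c})=E(\bw)$, and in the latter case the sign conditions give $\mathbb{c}=\mathbb{b}_{\bw}$. In either case there is no \emph{other} binomial dividing $B_{\bw}$, so $B_{\bw}$ is primitive.

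For (b), let $\mathbb{b}\in\ker_{\mathbb Z}A$ be nonzero and orient each edge $e\in\operatorname{supp}(\mathbb{b})$ in its flow direction: along $D$ if $b_e>0$ and against $D$ if $b_e<0$. The equation $A\mathbb{b}=\mathbb{0}$ is precisely conservation of the flow $|b_e|$ at every vertex, so any vertex receiving positive in-flow also sends positive out-flow; following outgoing edges from an arbitrary starting edge therefore never gets stuck, and in a finite graph the resulting walk must revisit a vertex, yielding a directed cycle in the oriented support. This directed cycle is a cycle $\bw$ of $G$ whose associated vector $\mathbb{b}_{\bw}$ is sign-compatible with $\mathbb{b}$ and supported inside $\operatorname{supp}(\mathbb{b})$; consequently $\mathbb{b}_{\bw}^+\leq\mathbb{b}^+$ and $\mathbb{b}_{\bw}^-\leq\mathbb{b}^-$ entrywise, i.e. $\mathbb{e}^{\mathbb{b}_{\bw}^+}\mid\mathbb{e}^{\mathbb{b}^+}$ and $\mathbb{e}^{\mathbb{b}_{\bw}^-}\mid\mathbb{e}^{\mathbb{b}^-}$. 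If $\mathbb{b}$ is primitive this forces $\mathbb{b}=\mathbb{b}_{\bw}$, as desired.

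Both directions are short once this viewpoint is fixed, and I expect the only real content to sit in (b): the extraction of a cycle whose $\pm1$ pattern matches the signs of $\mathbb{b}$. Sign-compatibility is exactly what makes the two divisibilities $\mathbb{e}^{\mathbb{b}_{\bw}^+}\mid\mathbb{e}^{\mathbb{b}^+}$ and $\mathbb{e}^{\mathbb{b}_{\bw}^-}\mid\mathbb{e}^{\mathbb{b}^-}$ hold simultaneously, and it is supplied by reading $A\mathbb{b}=\mathbb{0}$ as a circulation and invoking the elementary cycle-finding argument above. No deeper input (such as Graver-basis machinery or the graph case, Theorem~\ref{thm:graph:primitive}) is needed, because in an orientation the rigid $\pm1$ incidence structure already pins kernel vectors to directed cycles.
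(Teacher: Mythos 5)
Your proof is correct, but it follows a genuinely different route from the paper. The paper never proves this statement on its own: it quotes it from \cite{GRV2013CI} and then remarks, after Theorem~\ref{thm:signed:primitive}, that it is a special case of the signed-graph characterization --- a digraph, viewed as a signed graph, has no odd closed walks, so condition (ii) of Theorem~\ref{thm:signed:primitive} rules out any nontrivial decomposition of a primitive walk into two closed walks, and condition (i) then forces the multigraph $[\bw]$ to be a single cycle. That derivation rests on the full machinery of balanced section-decompositions, Lemmas~\ref{eq:sign:product:closed}--\ref{lem:twowalks:reverse}, and the Eulerian decomposition of kernel vectors (Proposition~\ref{rmk:basic:binomial}); this generality is unavoidable in the signed setting, where kernel vectors may have entries of absolute value greater than one and a primitive walk may traverse an edge twice (cycles of length two in $[\bw]$). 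You instead exploit exactly what is special to orientations: $\ker_{\mathbb{Z}}A(D)$ is the lattice of integer circulations, so in your part (a) the two divisibilities pin any candidate $\mathbb{c}$ to a $\{-1,0,1\}$-vector sign-compatible with $\mathbb{b}_{\bw}$ supported in $E(\bw)$, and a proper subset of a cycle's edges is a forest carrying no nonzero circulation; in part (b) the standard flow-decomposition argument extracts from any nonzero kernel vector a sign-compatible directed-cycle vector it dominates, which primitivity then forces to equal the vector itself. Your approach buys a short, self-contained, elementary proof with no Graver-basis or signed-walk input; the paper's viewpoint buys uniformity, since one theorem simultaneously covers graphs, digraphs, and arbitrary signs. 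Two small points worth making explicit: each $B_{\bw}$ is indeed a nonzero irreducible binomial because $\mathbb{b}_{\bw}^+$ and $\mathbb{b}_{\bw}^-$ have disjoint supports, and the directed cycle extracted in (b) is vertex-simple of length at least three (each support edge receives a single direction, so no edge repeats, and $G$ is simple, so there are no $2$-cycles); both are immediate, but the statement of the theorem concerns cycles of $G$, so they should be said.
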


\subsubsection{The complete intersection property}
\label{subsubsec:CI}
Recall that the toric ideal $I_A$ is a \textit{complete intersection} if it can be generated by  $\mathrm{ht}(I)$ elements, where  $\mathrm{ht}(I)$ is the \textit{height} of $I$.  It also holds that $\mathrm{ht}(I_A)=m-\mathrm{rank}(A)$.
For a connected graph $G$ with $n$ vertices and $m$ edges, $I_G$ is a complete intersection if and only if it is generated by $r(G)$ binomials, where
\[r(G)=\begin{cases}
m-n+1&\text{if }G\text{ is bipartite},\\
m-n&\text{otherwise. }
\end{cases}\]
For a disconnected graph $G$, $I_G$ is a \textit{complete intersection} if every connected component of $G$ has a complete intersection toric ideal.
Let $\mathcal{G}^{ci}$ be the set of all graphs with complete intersection toric ideals.
The bipartite graphs in $\mathcal{G}^{ci}$ are nicely characterized as follows.

\begin{theorem}
[{\cite[Corollary 3.4]{gitler2010ring}}]
For a connected bipartite graph $G$,  $G\in\mathcal{G}^{ci}$ if and only if $G$ is a ring graph. Here,
a \emph{ring graph} is a graph whose nonedge block is constructed by $2$-clique sums of cycles.
\end{theorem}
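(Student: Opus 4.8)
The plan is to reduce to the $2$-connected case and then show that, for a $2$-connected bipartite graph, the minimal number of generators of $I_G$ equals the number of induced (chordless) cycles of $G$, which in turn equals $r(G)=m-n+1$ exactly when $G$ is built from cycles by $2$-clique sums. First I would dispose of the block structure: if $G$ is a $1$-clique sum at a cut vertex, the two parts share no edge, so $I_G=I_{G_1}+I_{G_2}$ lives in disjoint sets of variables and both the height and the ring-graph condition split over the blocks, while bridges contribute free variables and are irrelevant. Thus it suffices to treat a $2$-connected bipartite $G$, where $\mathrm{ht}(I_G)=m-\mathrm{rank}(A)=m-(n-1)=r(G)$.

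Next I would pin down a small generating set. Since $G$ is bipartite, every closed walk has even length; by the generation recalled above $I_G=\langle B_{\bw}\rangle$ over even closed walks, and a walk with a repeated vertex splits into two shorter even closed walks, so one may reduce to cycles and obtain $I_G=\langle B_C : C\text{ even cycle}\rangle$ (consistently, in Theorem~\ref{thm:graph:primitive} condition (ii) can never hold once a vertex repeats, as both halves would be even, so the primitive binomials are exactly the even-cycle binomials). I would then reduce further to \emph{induced} cycles: if a cycle $C$ has a chord $e_0=uv$, splitting $C$ into the two cycles $C_1,C_2$ formed by each arc together with $e_0$, a direct computation gives the telescoping identity $B_C=G_eB_{C_1}-F_eB_{C_2}$, where $F_e,G_e$ are the products of the appropriate alternate edges of the two arcs; hence $B_C\in\langle B_{C_1},B_{C_2}\rangle$. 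Iterating on the cycle length yields $I_G=\langle B_C : C\text{ induced cycle of }G\rangle$, so $\mu(I_G)\le \#\{\text{induced cycles}\}$.

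For the matching lower bound I would use indispensability. Grading $K[e_1,\dots,e_m]$ by the vertices through $A$, the fiber of $B_C^{+}$ consists of all products of $G$-edges covering each vertex of $V(C)$ exactly once, i.e.\ the perfect matchings of the induced subgraph $G[V(C)]$. As $C$ is induced, $G[V(C)]=C$, which has exactly two perfect matchings, so this fiber is $\{B_C^{+},B_C^{-}\}$; a binomial whose fiber has only two elements must connect those two monomials in every generating set and therefore lies (up to sign) in each of them, so every induced-cycle binomial is indispensable. Hence $\mu(I_G)=\#\{\text{induced cycles}\}$. Since induced cycles always generate the cycle space (dimension $r(G)$), their number is at least $r(G)$, with equality precisely when they are linearly independent. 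Therefore $I_G$ is a complete intersection, i.e.\ $\mu(I_G)=r(G)$, if and only if the induced cycles of $G$ are linearly independent.

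It remains to identify this independence condition with the ring-graph property, and this structural equivalence is the step I expect to be the main obstacle. One direction is an easy induction on the $2$-clique sum construction: gluing a new cycle $D$ along an existing edge introduces exactly one new induced cycle (namely $D$, its interior vertices having degree two) and raises $r$ by one, while the previously built induced cycles remain independent because $D$ carries brand-new edges; so a ring graph has exactly $r$ independent induced cycles. For the converse I would induct using an ear decomposition $G=C_0\cup P_1\cup\cdots\cup P_{r-1}$: letting $G'$ be $G$ with the interior vertices of the last ear $P_{r-1}$ deleted, if the two attachment vertices of $P_{r-1}$ are joined by an edge of $G'$, then $G$ is a $2$-clique sum of a cycle with the smaller graph $G'$ (whose induced cycles are still independent), and induction finishes. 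The delicate point — and the crux of the whole argument — is to show that when the induced cycles are independent such an edge-attached ear can always be found; the obstruction is precisely a theta-configuration (three internally disjoint $u$–$v$ paths of length $\ge 2$, e.g.\ a subdivision of $K_{2,3}$) or a $K_4$-subdivision, each of which produces an extra induced cycle and hence a linear dependence, contradicting independence. Establishing this dichotomy rigorously is where the real work lies.
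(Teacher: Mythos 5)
This statement is quoted by the paper from \cite[Corollary 3.4]{gitler2010ring} without proof, so your attempt can only be measured against the cited literature. Most of your chain is sound, and in fact it retraces the Gitler--Reyes--Villarreal argument rather than finding a new route: the block reduction is legitimate (edges of distinct blocks give disjoint variables, and heights add along the block tree since $\sum_i (m_i-n_i+1)=m-n+1$); in a bipartite graph every closed walk has even length, so condition (ii) of Theorem~\ref{thm:graph:primitive} forces primitive walks to be cycles and your chord-splitting identity correctly reduces generation to chordless cycles; and the two-element-fiber argument is valid --- for an induced cycle $C$ one has $G[V(C)]=C$, an even cycle with exactly two perfect matchings, distinct induced cycles have distinct vertex sets and hence distinct multidegrees, and primitivity of $B_C$ shows $B_C\notin \mathfrak{m}I_G$, so $\mu(I_G)$ equals the number of induced cycles. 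Together with the standard fact that induced cycles span the cycle space, this correctly yields: $I_G$ is a complete intersection if and only if the induced cycles of $G$ are linearly independent.

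The genuine gap is the one you flag yourself, and it is not a small remainder: the equivalence between independence of induced cycles and constructibility by $2$-clique sums of cycles is the combinatorial heart of \cite{gitler2010ring}. Your forward induction (each glued cycle contributes exactly one new induced cycle, carried by fresh edges) is fine, but the converse --- that independence always permits an ear whose two attachment vertices are adjacent --- does not follow quickly from the obstruction picture you sketch. A theta configuration in $G$ gives three cycles summing to zero in the cycle space, but those three cycles need not be \emph{induced} cycles of $G$; chords can shatter them, and one must still conclude that the total count of induced cycles strictly exceeds $m-n+1$. Converting the presence of a theta or a $K_4$-subdivision into that global excess requires a structural induction of its own (in \cite{gitler2010ring} this is the theorem that ring graphs are exactly the graphs satisfying the primitive-cycle property and containing no $K_4$-subdivision, together with the count of primitive cycles). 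Since you explicitly defer this step, your proposal establishes only the algebraic reduction ``complete intersection $\Leftrightarrow$ induced cycles independent,'' not the stated characterization; as written it is an incomplete proof, with the missing piece being precisely the part of the cited theorem that carries the real combinatorial content.
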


A pseudo-code to check whether a graph is in $\mathcal{G}^{ci}$ or not is given in \cite{BGR2015}, see Algorithm~\ref{thm:CI_Graphs}. The algorithm relies on inductive idea, which may not give an exact description on structures of graphs in $\mathcal{G}^{ci}$.
Instead, useful structural properties are provided in \cite{BGR2015,TT2013}, and some are listed in Theorem~\ref{graph:useful:facts}. For a graph $H$, the number of connected components which are bipartite is denoted by $b(H)$.

\begin{figure}[!ht]
\centering
\fbox{\parbox{0.9\textwidth}{\vspace{-0.2cm}
\begin{algorithm}[\cite{BGR2015}]\label{thm:CI_Graphs}
\end{algorithm}
\hspace{0.5cm} Input: a simple graph $G$

\hspace{0.5cm} Output: \textsc{True} if $G\in \mathcal{G}^{ci}$ or \textsc{False} otherwise

\bigskip

\hspace{0.5cm} $H:=G$; $\mathcal{B}:=\emptyset$

\hspace{0.5cm}  \textbf{while} $\exists v\in V(H)$ with $\deg_H(v)\le 2$ \textbf{do}

\hspace{1cm} \textbf{if} $\deg_H(v)=2$ and $b(H-v)=b(H)$ \textbf{then}

\hspace{1.5cm}  $W:=\{v\}\cup N_H(v)\cup \{u\in V(H)| b(H-\{u,v\})>b(H-u)\}$

\hspace{1.5cm}  \textbf{if} not exists a closed walk $\mathbf{w}$ of even length such that $V(\mathbf{w})=W$ \textbf{then}

\hspace{2cm}   \textbf{return} \textsc{False}

\hspace{1.5cm}   \textbf{end if}

\hspace{1.5cm}   Let $\mathbf{w}$  be a shortest  closed walk of even length such that $V(\mathbf{w})=W$.

\hspace{1.5cm}   $\mathcal{B}:=\mathcal{B}\cup \{B_{\mathbf{w}}\}$

\hspace{1cm}  \textbf{end if}

\hspace{1cm} $H:=H-v$

\hspace{0.5cm}  \textbf{end while}

\hspace{0.5cm} Let $H_1, \ldots, H_s$ be the connected components of $H$.

\hspace{0.5cm} \textbf{if} exists $i$ such that $H_i$ is neither odd band nor even M\"{o}bius band
 \textbf{then}

\hspace{1cm} \textbf{return} \textsc{False}

\hspace{0.5cm} \textbf{end if}

\hspace{0.5cm} Let $\mathcal{B}_i$ be a minimal generating set of $I_{H_i}$ fore each $1\le i\le s$.

\hspace{0.5cm} \textbf{if} $I_G$ is generated by $\mathcal{B}\cup \mathcal{B}_{1}\cup\cdots\cup\mathcal{B}_s$ \textbf{then}

\hspace{1cm} \textbf{return} \textsc{True}

\hspace{0.5cm} \textbf{end if}

\hspace{0.5cm} \textbf{return} \textsc{False}

\smallskip
}}\end{figure}

\begin{theorem}
[{\cite[Theorem 3.6, Corollary 3.9,  Lemma 6.2]{BGR2015}}, {\cite[Theorem 3.1, Corollary 5.6]{TT2013}}]
\label{graph:useful:facts}
Let $G$ be a connected graph in $\mathcal{G}^{ci}$. Then the following hold:
\begin{itemize}
\item[\rm (i)] If $G$ is not  bipartite, then $2|E(G)|\le 3|V(G)|-\sum_{v\in V(G)} b(G-v)$.
\item[\rm(ii)] $G$  has no $K_{2,3}$ as a subgraph.
\item[\rm(iii)] If $G$ is $2$-connected and has two cycles $C$ and $C'$ of odd length sharing exactly one vertex $v$, then there is an edge $e$ not incident to $v$ which connects $C$ and $C'$.
\item[\rm (iv)] If $G$ is $2$-connected and has disjoint two cycles $C$ and $C'$ of odd length, then there are two disjoint edges  $e_1$ and $e_2$ such that each $e_i$ connects $C$ and $C'$.
\item[\rm (v)] $G$ has at most two non-bipartite blocks.
\item[\rm(vi)] Every induced subgraph of $G$ belongs to $
\mathcal{G}^{ci}$.
\end{itemize}
\end{theorem}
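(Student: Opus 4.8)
The five assertions are all \emph{obstructions}: each says that a certain configuration cannot occur (or must be accompanied by extra edges) once $I_G$ is a complete intersection. My plan is to prove them all by contradiction, exploiting the one quantitative handle available, namely the criterion recalled above that for connected $G$ the ideal $I_G$ is a complete intersection if and only if it is minimally generated by exactly $r(G)$ binomials ($r(G)=m-n+1$ if $G$ is bipartite and $m-n$ otherwise). Thus in each case it suffices to assume the forbidden configuration and then either produce more obligatory generators than $r(G)$ permits, or show that the reduction of Algorithm~\ref{thm:CI_Graphs} gets stuck. The common engine will be the notion of an \emph{indispensable} binomial, one lying in every minimal generating set: any lower bound on the number of indispensable binomials is a lower bound on the minimal number of generators, and the primitive-binomial description of Theorem~\ref{thm:graph:primitive} tells us exactly which even closed walks yield such forced binomials.

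For the local statements (i) and (ii) I would argue through short even closed walks. A four-cycle is a circuit, so its binomial $B_\bw$ is indispensable; a subgraph $K_{2,3}$ carries three distinct four-cycles whose binomials are pairwise non-proportional yet span only a rank-two sublattice of the kernel lattice of the incidence matrix. I would show that this ``three forced generators in codimension two'' phenomenon propagates to any $G\supseteq K_{2,3}$, forcing the minimal number of generators of $I_G$ above $r(G)$, which yields (ii). For the inequality (i) I would first rewrite it as $\sum_{v\in V(G)}\bigl(\deg_G(v)+b(G-v)\bigr)\le 3|V(G)|$, using $2|E(G)|=\sum_v \deg_G(v)$, and then prove the corresponding \emph{local} bound vertex by vertex: running the deletion step of Algorithm~\ref{thm:CI_Graphs} and charging each processed degree-$\le 2$ vertex against one element of the generating set $\mathcal{B}$, the complete-intersection hypothesis caps the total, and summing the per-vertex contributions recovers the stated inequality.

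The connectivity statements (iii), (iv), (v) concern how odd cycles may sit inside a complete-intersection graph. Here I would start from two odd cycles $C,C'$ and form the even closed walk that traverses $C$, a connecting path, then $C'$, and returns; since odd $+$ odd is even, this walk has even length, and when the two cycles meet the rest of the graph minimally, Theorem~\ref{thm:graph:primitive} certifies its binomial as primitive, each vertex lying in at most two of the constituent cycles. The point is that if the extra edges demanded in (iii) and (iv) were \emph{absent}, this primitive binomial would be forced into every generating set, while the $2$-connectivity of $G$, used to route a second independent even closed walk, would supply a further forced generator, overshooting $r(G)$; the presence of the connecting edge (or the two disjoint edges) is precisely what lets a shorter walk absorb it. Statement (v) then follows by iteration: three pairwise-connectable non-bipartite blocks would yield three independent ``odd-cycle-pair'' binomials against a cycle rank that can pay for only two, again contradicting minimality.

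The main obstacle, in every part, is the global bookkeeping of generators. Exhibiting individual forced binomials is routine given Theorem~\ref{thm:graph:primitive}; what is delicate is proving that they \emph{cannot be consolidated}, i.e.\ that no single binomial or smaller generating set can replace the collection we have produced. Controlling this requires a precise indispensability criterion together with a careful analysis of how the bipartite-component count $b(G-v)$ changes under the vertex deletions of Algorithm~\ref{thm:CI_Graphs}; the hard part will be matching these local changes to a global count of generators, and it is exactly in that matching that the real work lies.
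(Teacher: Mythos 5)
You should note at the outset that the paper never proves this theorem: it is imported verbatim from \cite{BGR2015} and \cite{TT2013}, so there is no internal argument to match your proposal against, and it must stand on its own. It does not. The central engine --- lower-bound the number of ``forced'' binomials and overshoot $r(G)$ --- cannot work as stated, because $r(G)$ is a global quantity: a fixed forbidden configuration forces only a bounded number of generators, while $r(G)$ grows with the graph, so three forced quadrics from a $K_{2,3}$, or two forced dumbbell binomials in (iii)--(v), never exceed $r(G)$ once $G$ is large. The argument must first be localized (restriction to induced subgraphs preserves the complete intersection property, as the paper notes before Proposition~\ref{prop:induced:CIS}); and even then, (ii) forbids $K_{2,3}$ as a \emph{subgraph}, not an induced one, so chords such as $ab$ or $xy$ can make your three quadrics individually dispensable by telescoping through monomials like $e_{ab}e_{xy}$. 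What would actually close (ii) is a lattice fact you gesture at (``rank-two sublattice'') but never deploy: if $I_G$ is a complete intersection, its $r=\mathrm{ht}(I_G)$ binomial generators have exponent vectors generating the rank-$r$ kernel lattice, hence linearly independent, whereas the three $4$-cycles of $K_{2,3}$ on parts $\{a,b\}$, $\{x,y,z\}$ satisfy the dependence $u_2=u_1+u_3$; all indispensable degree-$2$ binomials must occur among the generators, and that is the contradiction. Without this step your sketch of (ii) proves nothing.

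The sharper error is that you treat Theorem~\ref{thm:graph:primitive} as an indispensability criterion: ``the primitive-binomial description \dots tells us exactly which even closed walks yield such forced binomials.'' Primitive (Graver) binomials are in general dispensable --- the paper's own Example~\ref{ex} exhibits primitive binomials $B_{\bx_4}, B_{\bx_5}, B_{\bx_6}$ lying in ideals generated by other primitive binomials, inside a graph that \emph{is} a complete intersection. So in (iii) and (iv) the primitivity of your dumbbell walk does not force its binomial into any minimal generating set, and the claimed ``overshoot'' of $r(G)$ never materializes; the absence of the connecting edge(s) must instead be turned into a structural contradiction, which is what the cited proofs in \cite{BGR2015,TT2013} actually do. Similarly, (i) is only a plan: the charging scheme against Algorithm~\ref{thm:CI_Graphs} is asserted, not performed, and the delicate interaction of $b(G-v)$ with vertex deletions and with the terminal band/M\"{o}bius-band components is precisely where the proof of \cite[Theorem 3.6]{BGR2015} spends its effort. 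You candidly flag the global bookkeeping as ``where the real work lies''; that work \emph{is} the proof, and it is absent here.
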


In {\cite{BGR2015}}, 3-regular graphs with complete intersection toric ideals are characterized, see Theorem~\ref{thm:K4:ci}.
Instead of giving the definitions of bands or M\"{o}bius bands, we note that the complete graph $K_4$ is an even M\"{o}bius band. For the definitions, see \cite[Definition 4.2]{BGR2015}.

\begin{theorem}[{\cite[Theorem 4.4]{BGR2015}}]
\label{thm:K4:ci} For a 3-regular connected graph $G$,
$G\in \mathcal{G}^{ci}$ if and only if
it is an odd band or an even M\"{o}bius band.
\end{theorem}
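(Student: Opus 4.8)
The plan is to use the numerical criterion for complete intersection recorded earlier: for a connected $3$-regular graph $G$ on $n=2k$ vertices one has $|E(G)|=3k$, so $G\in\mathcal{G}^{ci}$ if and only if $I_G$ is minimally generated by $r(G)=k+1$ elements when $G$ is bipartite and by $r(G)=k$ elements when $G$ is non-bipartite. I would treat the bipartite and non-bipartite cases separately, and I expect the substance of the theorem to be a rigidity statement: the numerical and local obstructions of Theorem~\ref{graph:useful:facts} become so tight for $3$-regular graphs that they force the cylindrical (band) or twisted (M\"obius band) structure. Throughout I use the working description that a band of length $k$ is the prism over $C_k$ and a M\"obius band is the M\"obius ladder on $2k$ vertices, the parity referring to $k$; one checks directly that the odd band ($k$ odd) and the even M\"obius band ($k$ even, with $K_4$ the case $k=2$) are exactly the non-bipartite members of these two families, so in particular every graph in the claimed list is non-bipartite.

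First I would dispose of the bipartite case by showing that no bipartite $3$-regular graph lies in $\mathcal{G}^{ci}$. By the ring-graph characterization, a connected bipartite graph in $\mathcal{G}^{ci}$ is a ring graph, so each of its non-edge blocks is a $2$-clique sum of cycles. A $2$-clique sum of cycles always has a vertex of degree at most $2$: the cycle glued in last contributes at least one interior vertex of degree $2$ that no later step can raise, and such a vertex lies in a single block, hence is not a cut vertex of $G$ and keeps degree $2$ in $G$. Since a $3$-regular graph is not a forest it has such a non-edge block, so it has a vertex of degree at most $2$, contradicting $3$-regularity. Consequently every $3$-regular graph in $\mathcal{G}^{ci}$ is non-bipartite, and it remains to match the non-bipartite ones with odd bands and even M\"obius bands.

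For sufficiency I would produce an explicit complete intersection generating set of the correct size $k$. Indexing the $k$ rungs $e_1,\dots,e_k$, the $k$ ladder squares between consecutive rungs are even $4$-cycles and give binomials $B_{Q_1},\dots,B_{Q_k}$ spanning a $k$-dimensional subspace of the $(k+1)$-dimensional cycle space. I would keep the $k-1$ squares $B_{Q_1},\dots,B_{Q_{k-1}}$ and add one binomial coming from an even closed walk $\bw$ built from the odd cycles of the graph joined through a doubled rung (a digon), which is primitive by Theorem~\ref{thm:graph:primitive}; in the band case $\bw$ runs around the two odd boundary cycles, in the even M\"obius case it uses the odd cycles through the rim. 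These $k$ binomials have the correct cardinality $r(G)=k$, and the remaining task is to verify that they generate $I_G$, which I would do by enumerating the primitive generators via Theorem~\ref{thm:graph:primitive} and reducing each modulo the chosen binomials. The parity hypothesis is precisely what makes $\bw$ even and the count close up, which is why even bands and odd M\"obius bands are excluded.

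For necessity, let $G$ be $3$-regular, connected, non-bipartite and in $\mathcal{G}^{ci}$. Substituting $|E(G)|=3k$ and $|V(G)|=2k$ into the inequality of Theorem~\ref{graph:useful:facts}(i) gives $6k\le 6k-\sum_{v}b(G-v)$, hence $\sum_{v\in V(G)}b(G-v)=0$: deleting any single vertex leaves no bipartite component. I would combine this with the absence of a $K_{2,3}$ subgraph (Theorem~\ref{graph:useful:facts}(ii)), the two edge-connection conditions on odd cycles (Theorem~\ref{graph:useful:facts}(iii)--(iv)), and the bound of at most two non-bipartite blocks (Theorem~\ref{graph:useful:facts}(v)) to reconstruct $G$. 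Concretely, the forbidden $K_{2,3}$ should force the short even cycles to be disjoint $4$-cycles (squares); the odd-cycle connection conditions should line these squares up into a single ladder between the odd cycles; and $\sum_v b(G-v)=0$ should force the ladder to close up cyclically, with at most one twist, yielding exactly a prism over an odd cycle or a M\"obius ladder of even type. The main obstacle is this last rigidity step: each listed property is only necessary, and the real work is to show that jointly they admit no $3$-regular graph other than odd bands and even M\"obius bands. The delicate points are handling possible cut vertices, where $3$-regularity must distribute three edges across blocks, and verifying that the closing twist has the parity forcing the even M\"obius (rather than odd M\"obius) case.
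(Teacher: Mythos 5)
First, a point of reference: the paper does not prove this statement at all --- Theorem~\ref{thm:K4:ci} is quoted from \cite[Theorem 4.4]{BGR2015} as background, and even its key terms (band, M\"obius band) are deliberately left undefined here. So your attempt cannot be compared to an internal proof; it has to be judged on whether it would establish the classification, and as written it would not. The sufficiency half exhibits a plausible set of $k$ binomials ($k-1$ ladder squares plus one primitive walk through the odd cycles) but defers the verification that they generate $I_G$ to an unexecuted enumeration of primitive walks via Theorem~\ref{thm:graph:primitive}; that verification is the entire content of this direction, so this half is a statement of intent rather than an argument. There is also a fixable slip in your bipartite reduction: a degree-$2$ vertex of the last-glued cycle need not ``lie in a single block'' --- in a cubic graph it can perfectly well be a cut vertex with a bridge attached, regaining degree $3$ in $G$ --- so you should instead argue via end blocks, or note that a block built by $2$-clique sums of cycles is series-parallel (the class with no $K_4$-minor is closed under $2$-sums) and hence always retains vertices of degree at most $2$, and then rule out bridges separately.

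The more serious defect is the necessity direction, and it is not merely ``unfinished'' as you concede: the route through Theorem~\ref{graph:useful:facts} alone is a dead end. The Petersen graph satisfies every one of conditions (i)--(v): it has girth $5$, hence contains no $K_{2,3}$; it and all its vertex-deleted subgraphs are connected and non-bipartite, so $\sum_{v} b(G-v)=0$ and (i) holds with equality $30\le 30$; it is $3$-connected with a single non-bipartite block, so (v) holds; any two disjoint odd cycles are a pair of $5$-cycles whose cross-edges form a perfect matching, so (iv) holds; and (iii) is vacuous in any cubic graph, since two cycles sharing exactly one vertex would force degree $4$ there. Yet the Petersen graph is neither a band nor a M\"obius band, hence not in $\mathcal{G}^{ci}$. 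Your reconstruction sketch tacitly assumes the graph contains $4$-cycles (``squares'') that line up into a ladder, but nothing in (i)--(v) produces any short even cycle in a cubic graph of girth at least $5$. Note also that you cannot appeal to Algorithm~\ref{thm:CI_Graphs} to close this gap: for a cubic graph its while-loop never executes, and the algorithm then literally tests membership in the band/M\"obius families --- i.e., it hard-codes the very theorem being proved. Excluding graphs like Petersen requires the finer analysis carried out in \cite{BGR2015} itself, beyond the corollaries quoted in Theorem~\ref{graph:useful:facts}.
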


The following lists results on digraphs.
For a connected graph $G$ with $n$ vertices and $m$ edges,
and an orientation $D$ of $G$, $I_D$ is a (binomial) complete intersection if and only if it is generated by $r(D)=m-n+1$ binomials. For an orientation $D$ of a disconnected graph, $I_D$ is a complete intersection if the toric ideal of the digraph restricted to every connected component of $G$ is a complete intersection.
Let $\mathcal{G}^{cio}$ be the set of graphs $G$ such that $I_D$ is generated a complete intersection for every orientation $D$ of $G$.

\begin{theorem}
[{\cite[Theorems~4 and 6, Corollary 4]{GRV2013CI}}]\label{thm:CI:digraph}
Let $G$ be a connected graph.
\begin{itemize}
\item[\rm(i)] $G\in \mathcal{G}^{cio}$ if and only if $G$ is constructed by clique sums of complete graphs and/or cycles.
\item[\rm(ii)] Every induced subgraph of a graph in $\mathcal{G}^{cio}$ belongs to $\mathcal{G}^{cio}$.
\end{itemize}
\end{theorem}

\subsection{Even-signed walks in $(G,\tau)$ and their associated binomials of $I_{(G,\tau)}$}\label{subsec:evenwalk}
In this subsection, we explain how to define binomials associated with closed walks in a signed graph, and this will play a key role in Section~\ref{sec:main}.

\begin{definition}\label{def:odd:even:walk}
Let $\bw:v_{i_1}e_{j_1}v_{i_2}\cdots e_{j_t}v_{i_{t+1}}$ be a walk in a signed graph $(G,\tau)$ of length at least two.
An internal vertex term $v_{i_{\ell}}$ of $\bw$ is \textit{unbalanced} if $\tau(e_{j_{\ell-1}},v_{i_{\ell}})\tau(e_{j_{\ell}},v_{i_{\ell}})=1$.
As long as $\bw$ is closed, we say $v_{i_1}$ (or $v_{i_{t+1}}$) is
\textit{unbalanced} if $\tau(e_{j_t},v_{i_1})\tau(e_{j_1},v_{i_1})=1$.
We define $\mu(\bw)=(-1)^{k}$, where $k$ is the number of unbalanced vertex terms of $\bw$.
We also say $\bw$ is \textit{even-signed} if $\mu(\bw)=1$, and $\bw$ is \textit{odd-signed} if $\mu(\bw)=-1$.
\end{definition}

Throughout the paper,
a walk/cycle with odd/even number of edge terms is said to be a walk/cycle of odd/even length.
A \textit{triangle}  means a cycle of length three.

A \textit{balanced section} $\bw_0$ of a walk $\bw$ is a maximal section of $\bw$ such that $\bw_0$ has no internal unbalanced vertex term.
If $\bw_0+\cdots+\bw_k$ is a section-decomposition of $\bw$ such that each $\bw_i$ is a balanced section, then this form is called a \textit{balanced section-decomposition} of $\bw$.

For a closed walk $\bw$, if it has no unbalanced vertex term, then it is even-signed and has exactly one balanced section which is itself. Otherwise, its balanced section-decomposition $\bw_0+\cdots+\bw_k$ is also unique up to cyclic permutations.
So, by choosing an unbalanced vertex as the first vertex term properly, we can denote by $\bw=\bw_0+\cdots+\bw_k$.
\begin{example}\label{ex2}
Consider a signed graph $(G,\tau)$ in Figure~\ref{fig:signed:sec3}, and its two closed walks $\bw$ and $\bw'$, where
\[\bw:v_3e_3v_4 e_4 v_5e_5v_1e_6v_3e_2v_2e_1v_1e_6v_3,\quad \bw':v_1e_1v_2e_2v_3e_6v_1e_5v_5e_5v_1.\]
\begin{figure}[ht!]
  \centering
  \includegraphics[width=11.5cm,page=3]{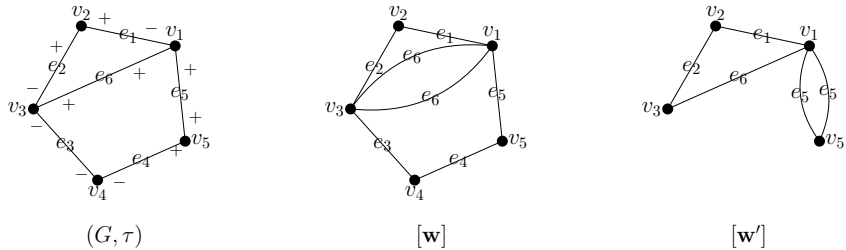}\\
  \caption{A signed graph $(G,\tau)$ and two closed walks $\bw$ and $\bw'$}\label{fig:signed:sec3}
\end{figure}

For a closed walk $\bw$, the 2nd, 3rd, 4th, and 6th vertex terms are the unbalanced vertex terms, which implies that $\bw$ is an even-signed walk in $(G,\tau)$ and the balanced sections are
$$ \bw_0: v_2e_1v_1e_6v_3e_3v_4,  \quad \bw_1: v_4 e_4 v_5, \quad \bw_2: v_5e_5v_1, \quad \bw_3: v_1e_6v_3e_2v_2. $$
The walk has a balanced section-decomposition $\bw_0+\bw_1+\bw_2+\bw_3$.
For the closed walk $\bw'$, there are three unbalanced vertex terms, the 2nd, 4th, and 5th vertex terms, which implies that $\bw'$ is odd-signed in $(G,\tau)$ and its balanced sections are
$$  \bw'_0: v_5e_5v_1{e_1}v_2, \quad  \bw'_1: v_2e_2v_3e_6v_1, \quad \bw'_2: v_1e_5v_5,$$ and
$\bw'_0+\bw'_1+\bw'_2$ is  a balanced section-decomposition of $\bw'$.
Note that  by  taking the first vertex term of $\bw$ or $\bw'$ properly, we can write $\bw=\bw_0+\bw_1+\bw_2+\bw_3$ or $\bw'=\bw'_0+\bw'_1+\bw'_2$.
\end{example}

\begin{definition}
Let $\bw$ be an even-signed closed walk in a signed graph, and $\bw_0+ \bw_1+\cdots+ \bw_{2k-1}$ be its balanced section-decomposition ($k\ge1$).
The \textit{binomial $B_{\bw}$ associated with $\bw$} is $B_{\bw}=B_{\bw}^+ -B_{\bw}^-$ where
\begin{eqnarray*}
B_{\bw}^+=\prod_{i:\text{even}}\prod_{e\in E({\bw}_{i})}e\qquad \text{and}\qquad B_{\bw}^-=\prod_{i:\text{odd}}\prod_{e\in E({\bw}_{i})}e.
\end{eqnarray*}
If $\bw$ has no unbalanced vertex term, then it is defined by $B_{\bw}^+=\prod_{e\in E(\bw)}e $ and $B_{\bw}^-=1$.
\end{definition}
Since $\bw$ has an even number of unbalanced vertex terms, its binomial is unique up to sign.
That is, the binomial is either $B_{\bw}$ or $-B_{\bw}$ according to its balanced section-decomposition.
For the even-signed closed walk ${\bw}$ in $(G,\tau)$ in Example~\ref{ex2}, $B_{\bw}=e_2e_4e_6-e_1e_3e_5e_6$ (one may say $B_{\bw}=e_1e_3e_5e_6-e_2e_4e_6$).

\begin{observation}\label{obervation}
If $\bw$ is an even-signed closed walk in a signed graph $(G,\tau)$, then $B_{\bw}\in I_{(G,\tau)}$.
\end{observation}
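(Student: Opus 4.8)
The plan is to realize $B_{\bw}$ as a monomial multiple of a standard generator of $I_{(G,\tau)}$, that is, to produce a vector $\mathbb{b}\in\mathbb{Z}^m$ with $A(G,\tau)\mathbb{b}=\mathbb{0}$ whose associated generator $\mathbb{e}^{\mathbb{b}^+}-\mathbb{e}^{\mathbb{b}^-}$ recovers $B_{\bw}$ after multiplying by a monomial. First I would encode the construction of $B_{\bw}$ as a vector. Writing $\bw=\bw_0+\cdots+\bw_{2k-1}$ for the balanced section-decomposition, attach to each edge term $e_j$ of $\bw$ the sign $\epsilon_j=+1$ if $e_j$ lies in a section $\bw_i$ with $i$ even and $\epsilon_j=-1$ if $i$ is odd (and $\epsilon_j=+1$ for all $j$ if $\bw$ has no unbalanced vertex term). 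Define $\mathbb{b}$ by $b_e=\sum_{j:\,e_j=e}\epsilon_j$. Letting $p_e$ (resp.\ $q_e$) be the total multiplicity of $e$ among the even- (resp.\ odd-) indexed sections, one has $B_{\bw}^+=\prod_e e^{p_e}$, $B_{\bw}^-=\prod_e e^{q_e}$ and $b_e=p_e-q_e$, so that $B_{\bw}=d\cdot(\mathbb{e}^{\mathbb{b}^+}-\mathbb{e}^{\mathbb{b}^-})$ where $d=\prod_e e^{\min(p_e,q_e)}$ (this uses $\min(p,q)+\max(p-q,0)=p$). Since $I_{(G,\tau)}$ is an ideal, it then suffices to show $A(G,\tau)\mathbb{b}=\mathbb{0}$, which places the generator $\mathbb{e}^{\mathbb{b}^+}-\mathbb{e}^{\mathbb{b}^-}$ in $I_{(G,\tau)}$.

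The heart of the argument is the verification $A(G,\tau)\mathbb{b}=\mathbb{0}$, done row by row. Fixing a vertex $v$, the $v$-th entry of $A(G,\tau)\mathbb{b}$ is $\sum_{e\ni v}\tau(e,v)b_e=\sum_{j:\,v\in e_j}\tau(e_j,v)\epsilon_j$, where the second sum runs over edge terms $e_j$ incident to $v$. I would regroup this over the vertex terms $v_i$ of $\bw$ with $v_i=v$: reading indices cyclically (with $e_0:=e_r$), each such $v_i$ is the common endpoint of exactly the two edge terms $e_{i-1}$ and $e_i$, and since $G$ is simple there are no loops, so each incidence is counted exactly once. Its contribution is $\tau(e_{i-1},v_i)\epsilon_{i-1}+\tau(e_i,v_i)\epsilon_i$. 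The key local computation is that this bracket vanishes at every vertex term: if $v_i$ is balanced then $e_{i-1},e_i$ lie in one section, so $\epsilon_{i-1}=\epsilon_i$ while $\tau(e_{i-1},v_i)=-\tau(e_i,v_i)$; if $v_i$ is unbalanced then it is the cut between two consecutive sections, so $\epsilon_{i-1}=-\epsilon_i$ while $\tau(e_{i-1},v_i)=\tau(e_i,v_i)$. In either case the two summands cancel, so the $v$-th entry is $0$ for every $v$, giving $A(G,\tau)\mathbb{b}=\mathbb{0}$.

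The point requiring care, and where the hypothesis that $\bw$ is even enters, is the global consistency of the section labels $\epsilon_j$. The labeling $+,-,+,-,\dots$ flips at each unbalanced vertex term, so for it to close up coherently around the closed walk the number of balanced sections must be even, which is precisely $\mu(\bw)=1$. I expect the main obstacle to be this bookkeeping, together with the claim that regrouping by vertex terms counts each incidence exactly once; once these are set up the two-line cancellation at balanced and unbalanced vertex terms finishes the verification, and the identity $B_{\bw}=d\cdot(\mathbb{e}^{\mathbb{b}^+}-\mathbb{e}^{\mathbb{b}^-})$ upgrades membership of the reduced generator to membership of $B_{\bw}$ itself.
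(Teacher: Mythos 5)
Your proposal is correct and follows essentially the same route as the paper's own proof: you construct the same signed multiplicity vector (your $\epsilon_j$ is the paper's $\kappa(e_i)$, your $b_e$ its $f^+(e)-f^-(e)$), verify $A(G,\tau)\mathbb{b}=\mathbb{0}$ by the same vertex-by-vertex cancellation at balanced and unbalanced vertex terms, and recover $B_{\bw}$ via the same monomial factor $\prod_e e^{\min(p_e,q_e)}$. Your explicit remarks on the wrap-around indexing and on evenness being what makes the alternating section labels close up coherently are sound points that the paper leaves implicit.
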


\begin{proof}
Let $\bw:v_{i_1}e_{j_1} \cdots v_{i_t} e_{j_t} v_{i_1}$ be an even-signed closed walk in $(G,\tau)$.
We may assume that the first vertex term is unbalanced, and let $\bw=\bw_0+\cdots+\bw_{2k-1}$ be a balanced section-decomposition of $\bw$.
For each edge term $e_{j_{\ell}}$, we let $\kappa(e_{j_{\ell}})=(-1)^{s}$ if $e_{j_{\ell}}$ belongs to the section $\bw_s$.

Let $\mathbb{b}=(b_e)_{e\in E(G)}$ be a vector such that $b_e=f^+(e)-f^{-}(e)$ for every edge $e$, where
\[f^+(e)= | \{\ell \mid e_{j_{\ell}}=e\text{ and }\kappa(e_{j_{\ell}})=1\} |,\qquad f^-(e)= | \{\ell \mid e_{j_{\ell}}=e\text{ and }\kappa(e_{j_{\ell}}) =-1 \}|.\]
Then the entry of $A\mathbb{b}$ corresponding to a vertex $v$ is
\[
\sum_{\substack{e: v\in e}} (f^+(e)-f^-(e)) \tau(e,v) =
\sum_{\substack{e: v\in e}} \sum_{\ell:e_{j_{\ell}}=e} \kappa(e_{j_{\ell}}) \tau(e,v)=\sum_{\ell:v_{i_{\ell}}=v} \left( \kappa(e_{j_{\ell-1}})\tau(e_{j_{\ell-1}},v) +\kappa(e_{j_{\ell}}) \tau(e_{j_{\ell}},v) \right).\]
If $\kappa(e_{j_{\ell}})=\kappa(e_{j_{\ell -1}})$, then $\tau(e_{j_{\ell-1}},v) =-\tau(e_{j_{\ell}},v)$, and if $\kappa(e_{j_{\ell}})\neq \kappa(e_{j_{\ell -1}})$, then  $\tau(e_{j_{\ell-1}},v) =\tau(e_{j_{\ell}},v)$.
In both cases, the sum $\kappa(e_{j_{\ell -1}})\tau(e_{j_{\ell -1}},v) +\kappa(e_{j_{\ell}})\tau(e_{j_{\ell}},v)$ is $0$. This implies that $\mathbb{e}^{\mathbb{b}^+}-\mathbb{e}^{\mathbb{b}^-}\in I_{(G,\tau)}$.

On the other hand, from the definition, $\mathbb{e}^{\mathbb{x}} (\mathbb{e}^{\mathbb{b}^+}-\mathbb{e}^{\mathbb{b}^-})=B_{\bw}$,
where $\mathbb{x}=(x_e)_{e\in E(G)}$ is the vector such that   $x_e=\min\{f^+(e),f^-(e)\}$ for every edge $e$.
Thus, $B_{\bw}$ is an element of $I_{(G,\tau)}$.
\end{proof}

It seems natural to have the following proposition from the definition of $I_{(G,\tau)}$.

\begin{proposition}
\label{prop:even_all_generator}
If $(G,\tau)$ is a signed graph, then the toric ideal $I_{(G,\tau)}$ is generated by
$$\{B_{\bw}\mid \bw\text{ is an even-signed closed walk in }(G,\tau)\}.$$
\end{proposition}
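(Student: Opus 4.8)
The plan is to prove the two inclusions separately. The inclusion $\langle B_{\bw}\mid \bw\text{ even closed walk}\rangle\subseteq I_{(G,\tau)}$ is immediate from Observation~\ref{obervation}. For the reverse inclusion I would use that, by the very definition of a toric ideal, $I_{(G,\tau)}=I_{A(G,\tau)}$ is generated by all binomials $\mathbb{e}^{\mathbb{b}^+}-\mathbb{e}^{\mathbb{b}^-}$ with $\mathbb{b}\in\mathbb{Z}^m$ and $A(G,\tau)\mathbb{b}=\mathbb{0}$. Hence it suffices to show that each such binomial lies in the ideal generated by the $B_{\bw}$. The idea is to decompose the ``signed edge multiset'' carried by $\mathbb{b}$ into even closed walks and then recombine their binomials.

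The combinatorial heart is the decomposition. Fix $\mathbb{b}\neq\mathbb{0}$ in the kernel, and form the multigraph in which each edge $e$ is taken with multiplicity $|b_e|$; label every incidence $(e,v)$ by $\epsilon(e)\tau(e,v)$, where $\epsilon(e):=\mathrm{sign}(b_e)$. The vertex equation $\sum_{e\ni v}\tau(e,v)b_e=0$ says exactly that at each vertex the number of incidences labelled $+1$ equals the number labelled $-1$. Pairing these labels at every vertex and following the pairings as transitions performs an Eulerian-type routing that partitions the whole edge multiset into closed walks $\bw_1,\dots,\bw_p$. The \emph{key step} is to check that each $\bw_j$ is even and that $\epsilon$ realizes its binomial. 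By construction, consecutive incidences (taken cyclically) of $\bw_j$ have opposite labels, so the edge function $\epsilon$ satisfies $\epsilon(e_i)\tau(e_i,v_i)=-\epsilon(e_{i-1})\tau(e_{i-1},v_i)$ at every vertex term $v_i$. Multiplying these relations around the closed walk, the left-hand side telescopes to $1$, while the product of the factors $-\tau(e_{i-1},v_i)\tau(e_i,v_i)$ equals $\mu(\bw_j)$ (using that $\tau(e_{i-1},v_i)\tau(e_i,v_i)=+1$ precisely at unbalanced terms); this forces $\mu(\bw_j)=1$, so $\bw_j$ is even. Moreover the balanced-section parity $\kappa$ of $\bw_j$ obeys the \emph{same} recursion, whence $\kappa=\pm\epsilon$, so that $B_{\bw_j}^+=\prod_{e:\,b_e>0}e$ and $B_{\bw_j}^-=\prod_{e:\,b_e<0}e$ over the edges of $\bw_j$ (up to the global swap).

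It remains to assemble. Since every edge-copy lies in exactly one $\bw_j$ and all copies of a given edge share the sign $\epsilon(e)$, summing incidence vectors gives $\mathbb{b}=\sum_{j}\mathbb{b}_{\bw_j}$, and therefore $\mathbb{e}^{\mathbb{b}^+}=\prod_{j}B_{\bw_j}^+$ and $\mathbb{e}^{\mathbb{b}^-}=\prod_{j}B_{\bw_j}^-$. Because each $B_{\bw_j}=B_{\bw_j}^+-B_{\bw_j}^-$ lies in $I_{(G,\tau)}$, the monomials $B_{\bw_j}^+$ and $B_{\bw_j}^-$ have the same $A(G,\tau)$-image, so the standard telescoping $\prod_j P_j-\prod_j Q_j=\sum_k(\prod_{j<k}Q_j)(P_k-Q_k)(\prod_{j>k}P_j)$ expresses $\mathbb{e}^{\mathbb{b}^+}-\mathbb{e}^{\mathbb{b}^-}$ as a polynomial combination of the $B_{\bw_j}$, each term being a monomial multiple of some $B_{\bw_j}$. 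This yields the reverse inclusion and completes the proof. I expect the main obstacle to be the key step of the second paragraph: the sign bookkeeping showing that the routed walks come out \emph{even} (so that $B_{\bw_j}$ is even defined) and that their balanced-section structure reproduces the sign pattern of $\mathbb{b}$; the rest is routing and the routine telescoping.
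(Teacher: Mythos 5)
Your proposal is correct and takes essentially the same route as the paper: the Appendix proof of Proposition~\ref{rmk:basic:binomial} performs exactly your incidence-label routing (its sets $E_H^{+}(v)$ and $E_H^{-}(v)$ are precisely the incidences with $\epsilon(e)\tau(e,v)=+1$ and $-1$), proves evenness of the resulting walks and the agreement of $\mathrm{sign}(b_e)$ with the balanced-section parity (its claim that $b_{e_{i-1}}b_{e_i}<0$ iff $v_i$ is unbalanced), after which Proposition~\ref{prop:even_all_generator} follows together with Observation~\ref{obervation} by the same telescoping you make explicit. The only cosmetic differences are that the paper extracts one Eulerian walk per connected component of $G_{\mathbb{b}}$ via a maximal-walk argument rather than your transition pairing (which gets closedness for free, possibly with several walks per component), and it verifies evenness by counting unbalanced vertex terms instead of your cyclic product computation.
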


The above proposition immediately follows from  Observation~\ref{obervation} and Proposition~\ref{rmk:basic:binomial}, and we leave the proof detail  of Proposition~\ref{rmk:basic:binomial} in Appendix.

\begin{proposition}\label{rmk:basic:binomial}
Let $(G,\tau)$ be a signed graph,
and $\mathbb{b}={(b_e)}_{e\in E(G)}$ be a nonzero integer vector such that $A\mathbb{b}=\mathbb{0}$, where $A=A(G,\tau)$.
If we denote by $(G_{\mathbb{b}},\tau_{\mathbb{b}})$ the signed multigraph induced by $|b_e|$ copies of $e$ for every edge $e$ with its sign copied, then
each connected component $D$ of $G_{\mathbb{b}}$ has an  Eulerian $\bw_D$ which is an even-signed closed walk in $(G,\tau)$ and
\[ \mathbb{e}^{\mathbb{b}^+}-\mathbb{e}^{\mathbb{b}^-} =\prod_{\substack{ D:\text{connected}\\ \text{component of }G_{\mathbb{b}} }} B_{\bw_D}^+  \ -  \
\prod_{\substack{D:\text{connected} \\\text{component of }G_{\mathbb{b}}} } B_{\bw_D}^-.\]
\end{proposition}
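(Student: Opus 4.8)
The plan is to recast the hypothesis $A\mathbb{b}=\mathbb{0}$ as a balanced two-colouring of the incidences of the multigraph $G_{\mathbb{b}}$, and then to realise each $\bw_D$ as an Eulerian circuit that respects this colouring. For every edge $e$ with $b_e\neq0$ let $t(e)=+1$ if $b_e>0$ and $t(e)=-1$ if $b_e<0$, and colour each incidence $(e,v)$ of $G_{\mathbb{b}}$ by $\sigma(e,v)=t(e)\,\tau(e,v)\in\{+1,-1\}$. The $v$-entry of $A\mathbb{b}$ is $\sum_{e\ni v}b_e\tau(e,v)=\sum_{e\ni v}t(e)\tau(e,v)\,|b_e|$, which is exactly the difference between the number of incidences coloured $+1$ and the number coloured $-1$ at $v$, counted with the multiplicities $|b_e|$ that define $G_{\mathbb{b}}$. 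Hence $A\mathbb{b}=\mathbb{0}$ says precisely that at every vertex of $G_{\mathbb{b}}$ the two colours occur equally often; in particular every vertex has even degree, so each connected component $D$ admits an Eulerian circuit.

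The technical core, and the step I expect to be the main obstacle, is to upgrade ``some Eulerian circuit'' into one that \emph{alternates} with respect to $\sigma$, meaning that at each visit of a vertex the incoming and outgoing incidences receive opposite colours. I would isolate this as a lemma: a connected multigraph whose incidences are two-coloured so that each vertex sees equally many of each colour admits an Eulerian circuit in which the two incidences at every vertex-visit have opposite colours. To prove it I would use transition systems. At each vertex pair up its incidences into opposite-coloured pairs, which is possible by the balance condition; the resulting transition system decomposes the edge set into closed trails, each alternating in colour. If there is more than one trail, connectivity produces a vertex $v$ meeting two of them, and re-pairing the two transitions at $v$ (swapping partners, which keeps every pair opposite-coloured) merges these two trails into one while preserving alternation. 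Iterating yields a single alternating Eulerian circuit, which I take to be $\bw_D$.

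Next I would translate the colour-alternation of $\bw_D$ back into the language of Definition~\ref{def:odd:even:walk}. At a vertex term with incoming edge $f$ and outgoing edge $f'$, alternation means $t(f)\tau(f,v)\cdot t(f')\tau(f',v)=-1$, that is $t(f)t(f')=-\tau(f,v)\tau(f',v)$; so the value of $t$ switches at exactly the unbalanced vertex terms and is constant at the balanced ones. Consequently each balanced section of $\bw_D$ is monochromatic in $t$ and consecutive balanced sections carry opposite values of $t$; in particular the number of unbalanced vertex terms equals the number of switches of $t$ around the closed walk, which is even, so $\bw_D$ is an even closed walk in $(G,\tau)$. I now assign the two monomials of $B_{\bw_D}$ so that $B_{\bw_D}^{+}$ is the product of the edges lying in the $t=+1$ sections and $B_{\bw_D}^{-}$ the product of those in the $t=-1$ sections. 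When $\bw_D$ has an unbalanced vertex term this is achieved by choosing the even/odd labelling of the balanced section-decomposition so that the $t=+1$ sections are even-indexed; when $\bw_D$ has none, $D$ is monochromatic, and I simply take the representative of $B_{\bw_D}$ (up to its sign) with $B_{\bw_D}^{+}\mid\mathbb{e}^{\mathbb{b}^{+}}$. Since $\bw_D$ traverses each copy of each edge of $D$ exactly once, in all cases $B_{\bw_D}^{+}=\prod_{e\in E(D),\,b_e>0}e^{b_e}$ and $B_{\bw_D}^{-}=\prod_{e\in E(D),\,b_e<0}e^{|b_e|}$.

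Finally I would assemble the components. As all $|b_e|$ copies of a fixed edge lie in a single component of $G_{\mathbb{b}}$, multiplying the two expressions above over all components $D$ gives $\prod_{D}B_{\bw_D}^{+}=\mathbb{e}^{\mathbb{b}^{+}}$ and $\prod_{D}B_{\bw_D}^{-}=\mathbb{e}^{\mathbb{b}^{-}}$, which is exactly the claimed identity. I would stress in the write-up that the per-component choice of sign in the previous paragraph (legitimate because $B_{\bw}$ is only defined up to sign) is what makes the product form land on $\mathbb{e}^{\mathbb{b}^{+}}$ and $\mathbb{e}^{\mathbb{b}^{-}}$ on the nose rather than merely up to an overall sign; without it, a monochromatic component of type $-1$ would push its edges into the wrong factor. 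The only genuinely new ingredient beyond bookkeeping is the alternating-Eulerian-circuit lemma, so that is where I would concentrate the detail.
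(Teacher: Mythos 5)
Your proof is correct, and it follows the same overall strategy as the paper's: your colouring $\sigma(e,v)=t(e)\,\tau(e,v)$ is exactly the paper's partition of the incidences of $H=G_{\mathbb{b}}$ into $E_H^{+}(v)=E_H^{++}(v)\cup E_H^{--}(v)$ and $E_H^{-}(v)=E_H^{+-}(v)\cup E_H^{-+}(v)$; your balance condition is the paper's Claim~\ref{eq:H}; your observation that $t$ switches precisely at the unbalanced vertex terms is the paper's Claim~\ref{claim:sign}; and the per-component sign normalisation and the final product over components are identical. The genuine divergence is in how the alternating Eulerian circuit is obtained. The paper builds it greedily via the procedure $(\S)$: extend a walk edge by edge, always leaving a vertex through the colour class opposite to the one just entered, prove by a counting argument that a maximal such walk closes up, and then concatenate with walks constructed in the uncovered part (where the paper is terse about why the leftover component actually meets the walk already built). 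You instead pair the incidences at each vertex into opposite-coloured transitions, note that the resulting transition system decomposes the edge set into colour-alternating closed trails, and merge distinct trails through a common vertex by swapping two transitions --- a swap that manifestly preserves the opposite-colouring of every pair, so alternation survives the merge. This is the standard compatible-Euler-tour argument; it isolates a clean reusable lemma and makes the merging step airtight exactly where the paper hand-waves. Your evenness argument is also slightly sharper: you count sign changes of $t$ around a closed walk, which is automatically even, whereas the paper asserts an exact formula for the number of unbalanced vertex terms in terms of $\bigl||E_H^{++}(v)|-|E_H^{+-}(v)|\bigr|$ and $\bigl||E_H^{--}(v)|-|E_H^{-+}(v)|\bigr|$ that is in general only correct as a parity statement; your version sidesteps that inaccuracy. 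Finally, your explicit treatment of monochromatic components (choosing the representative of $B_{\bw_D}$ up to sign so that $B_{\bw_D}^{+}$ divides $\mathbb{e}^{\mathbb{b}^{+}}$) makes precise what the paper disposes of with ``the other case is similar.'' In short: same decomposition and bookkeeping, a different and somewhat more robust proof of the key Eulerian lemma.
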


\section{The main results}\label{sec:main}
In this section, we state the main results of the paper.
Subsection~\ref{subsec:generator} focuses on the  primitive binomials of $I_{(G,\tau)}$, and
Subsection~\ref{subsec:CI}  gives  characterizations of graphs $G$ with a complete intersection $I_{(G,\tau)}$ for every sign $\tau$.

\subsection{Primitive binomials of $I_{(G,\tau)}$}\label{subsec:generator}

We characterize all primitive binomials in $I_{(G,\tau)}$.

\begin{theorem}\label{thm:signed:primitive}
For an even-signed closed walk $\bw$ in a signed graph, $B_\bw$ is primitive if and only if the following hold:
\begin{itemize}
\item[\rm(i)]
The multigraph $[\bw]$ is constructed by $1$-clique sums of cycles of length at least two such that  every vertex of $[\bw]$ belongs to at most two cycles.
\item[\rm(ii)]
For every nontrivial section-decomposition $\bw=\bw_0+\bw_1$ into two closed walks $\bw_0$ and $\bw_1$,
each $\bw_i$ is odd-signed in $(G,\tau)$.
\end{itemize}
\end{theorem}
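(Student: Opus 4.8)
The plan is to reduce the signed statement to its unsigned counterpart, Theorem~\ref{thm:graph:primitive}, by isolating the single place where the sign $\tau$ enters: the parity bookkeeping. The first step is to record a multiplicativity of the parity $\mu$. For any section-decomposition $\bw=\bw_0+\bw_1$ of a closed walk into two closed walks I claim $\mu(\bw)=\mu(\bw_0)\mu(\bw_1)$: at the repeated base vertex $v$ the two occurrences are paired by different edges in $\bw$ than inside the standalone $\bw_i$, but writing out the four incidence signs $\tau(\cdot,v)$ at $v$ shows that the discrepancy between the two pairings always contributes an even number of unbalanced terms. Consequently $\bw$ even forces $\bw_0,\bw_1$ to be \emph{both} even or \emph{both} odd, so condition~(ii) says exactly that no split ever lands in the ``both even'' case. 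This is the precise analogue of the length-parity condition in Theorem~\ref{thm:graph:primitive}, and it is what will allow the unsigned combinatorics to transfer.

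The second step is to translate primitivity into a statement about walks. By the definition of primitive together with Proposition~\ref{rmk:basic:binomial}, $B_\bw$ fails to be primitive exactly when the kernel vector $\mathbb{b}$ associated with $\bw$ in Observation~\ref{obervation} admits a nonzero proper \emph{conformal} subvector $\mathbb{c}$ (meaning $\mathbb{c}^+\le\mathbb{b}^+$ and $\mathbb{c}^-\le\mathbb{b}^-$ entrywise); and by Proposition~\ref{rmk:basic:binomial} such a $\mathbb{c}$ is carried by an even closed walk on a sub-multiset of $E(\bw)$ whose edgewise $\pm$-signs are inherited from $\bw$. So I must understand which even closed sub-walks are conformal. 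The key point I will prove is that a contiguous closed section $\bw_0$ of $\bw$ inherits signs that balance at every vertex precisely because, being even, it is internally sign-consistent; hence every even piece of a section-decomposition yields a conformal subvector. This already gives the easy half of~(ii): if some split $\bw=\bw_0+\bw_1$ had $\bw_0$ (equivalently, by multiplicativity, $\bw_1$) even, then the analogous kernel vector for $\bw_0$ would be a proper conformal subvector of $\mathbb{b}$, so $B_\bw$ would not be primitive.

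For condition~(i) and for the converse I will run the Eulerian analysis of $[\bw]$ exactly as in Theorem~\ref{thm:graph:primitive}, since~(i) refers only to the multigraph $[\bw]$. When the structure in~(i) fails there is a vertex $v$ at which $\bw$ can be rerouted into at least three closed sub-walks $D_1,\dots,D_r$ with $r\ge 3$ (for instance when $v$ lies in three cycles, or is a degree-$\ge 4$ vertex inside a $2$-connected block of $[\bw]$ that is not a single cycle). Here the sign machinery works for free: since $\prod_i\mu(D_i)=\mu(\bw)=1$, an odd number of the $D_i$ cannot all be odd, so at least one $D_i$ is even, and that even $D_i$ is a conformal subwalk witnessing non-primitivity. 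Conversely, assuming~(i), the cactus-of-cycles shape forces every conformal even sub-multiset to be a union of whole cycles realized by a contiguous section, which by the easy half of~(ii) is ruled out once~(ii) holds; combined with the reformulation above, (i) and~(ii) together yield primitivity.

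The main obstacle is this last sign-consistency analysis under~(i): I must show that, in a $1$-clique sum of cycles with every vertex in at most two cycles, the inherited $\pm$-signs of a conformal subvector cannot balance on a ``non-contiguous'' union of cycles (such as an interior cycle that the Euler tour of $\bw$ leaves and re-enters only after traversing another cycle), so that conformal subvectors are forced to be genuine contiguous sections and hence are caught by~(ii). This is exactly where the signed setting departs from the plain-graph case: I will handle it by propagating the balance condition $A(G,\tau)\mathbb{c}=\mathbb{0}$ through the degree-$2$ vertices of each cycle and the degree-$4$ cut vertices, using the multiplicativity of $\mu$ to control the seam signs at the cut vertices. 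The plain-graph specialization ($\tau$ constant) and the digraph specialization then recover Theorem~\ref{thm:graph:primitive} and Theorem~\ref{thm:digraph:primitive} as the two extreme cases in which every, respectively no, vertex term is unbalanced.
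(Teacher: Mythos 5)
Your overall skeleton matches the paper's (multiplicativity of $\mu$, the kernel-vector/conformality reformulation via Proposition~\ref{rmk:basic:binomial}, and a cactus analysis of $[\bw]$ for the converse), but there is a genuine gap in your derivation of condition (i) in the ``only if'' direction. You claim that whenever the structure in (i) fails there is a \emph{single} vertex $v$ at which $\bw$ can be cut into $r\ge 3$ closed subwalks $D_1,\dots,D_r$, and you then extract an even $D_i$ from $\prod_i\mu(D_i)=1$. Two problems arise. First, the parity step itself only works when $r$ is odd: for even $r$ all $D_i$ can be odd with product $1$ (this is repairable, since then a sum of two consecutive pieces is an even contiguous closed section). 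Second, and more seriously, the hypothesis is false in the hard configuration: when two \emph{distinct} vertices $u$ and $v$ are each repeated and their occurrences interleave along $\bw$ (so $\bw=\bw_0+\bw_1+\bw_2+\bw_3$ with $\bw_0+\bw_1$ closed at $v$ and $\bw_1+\bw_2$ closed at $u$, but no single $\bw_i$ closed), the walk splits at $v$ into only \emph{two} closed parts $(\bw_0+\bw_1)+(\bw_2+\bw_3)$, and likewise at $u$, and all four of these pairwise sums may be odd, so no even closed section is visible and your argument produces no conformal witness, even though $[\bw]$ then has a non-cycle block and (i) fails. This is precisely where the paper needs its reversal lemma (Lemma~\ref{lem:twowalks:reverse}): for odd closed walks, $\bw^{-1}+\bw'$ is again even with $B_{\bw^{-1}+\bw'}=\pm B_{\bw+\bw'}$, so the rerouted walk $\bw^{\ast}=\bw_1^{-1}+\bw_0^{-1}+\bw_2+\bw_3$ has the \emph{same} primitive binomial; applying the no-even-section criterion to $\bw^{\ast}$ makes $\bw_3+\bw_1^{-1}$ odd as well, and Lemma~\ref{lem:twowalks:basic} then forces $\mu(\bw_0+\bw_1)=1$, a contradiction. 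Your proposal contains no analogue of this reversal device, and without it the exclusion of interleaved repetitions --- hence the cut-vertex/cycle-block structure of $[\bw]$ --- does not follow.

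The remaining ingredients you state are sound and essentially the paper's: your multiplicativity claim is Lemma~\ref{eq:sign:product:closed}; your ``even contiguous section gives a conformal subvector'' is Lemma~\ref{lem:product} (which does require the careful seam bookkeeping you promise, since the balanced sections of $\bw_0$ and $\bw_1$ can merge at the base vertex); and your converse-direction plan (conformal subvectors under (i) are unions of whole cycles realized as sections, then killed by (ii)) is the paper's route through $G_{\mathbb{c}}$ and $G_{\mathbb{d}}$, though you should also note that one must first rule out an edge dividing both $B_\bw^+$ and $B_\bw^-$ (the paper's Claim~\ref{claim:dividing:one}) before identifying $B_\bw$ with a kernel vector at all.
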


A proof of Theorem~\ref{thm:signed:primitive} is given in Subsection~\ref{sec:primitive}.
We often say an even-signed closed walk $\bw$ in a signed graph $(G,\tau)$ is \emph{primitive} if  $B_{\bw}$ is primitive in $I_{(G,\tau)}$.
Figure~\ref{fig:image} shows an image of the multigraph $[\bw]$ for a  primitive walk $\bw$. Note that if $\bw$ is a primitive walk,
then every cut vertex of $[\bw]$ decomposes $[\bw]$ into two parts, and each part corresponds to an odd-signed closed walk in $(G,\tau)$. \footnote{In the toric ideals of graphs, this was explained with a notion of `sink' of a block, see~\cite{RTT2012minimal}.}

\begin{figure}[ht!]
  \centering
  \includegraphics[width=5.5cm,page=4]{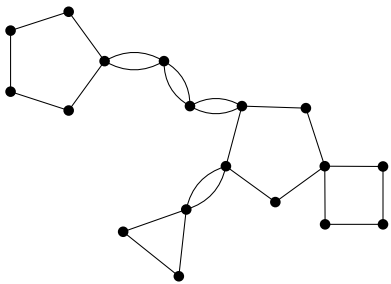}\\\caption{The multigraph $[\bw]$, where  $\bw$ is an even-signed closed walk in a signed graph $(G,\tau)$. If $(G,\tau)$ has no even-signed cycle, then $\bw$ is primitive. }\label{fig:image}
\end{figure}

\begin{example}\label{ex3}
Consider a signed graph $(G,\tau)$ in Figure~\ref{fig:sink}.
Let $\bw$ be a walk in $(G,\tau)$ defined by
$$\bw:v_1e_1v_2e_2v_3e_3v_1e_4v_4e_5v_5e_6v_6e_7v_7e_8v_5e_9 v_8 e_{10}v_1.$$
It has four balanced sections, and so $\bw$ is an even-signed closed walk in $(G,\tau)$.
See the vertex $v_5$ which is repeated in $\bw$. Then
$\bw$ has a nontrivial section-decomposition $\bw_0+\bw_1$ and each of $\bw_0$ and $\bw_1$ is an even-signed closed walk, where $\bw_0:~v_5e_6v_6e_7v_7e_8v_5$ and $\bw_1:~v_5e_9 v_8 e_{10}v_1e_1v_2e_2v_3e_3v_1e_4v_4e_5v_5$.
Thus, its associated binomial $B_{\bw}$ is not primitive by Theorem~\ref{thm:signed:primitive}(ii).
\begin{figure}[ht!]
  \centering
  \includegraphics[width=5cm,page=5]{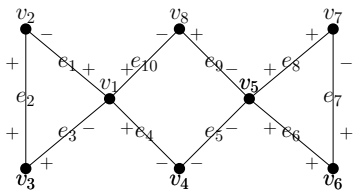}\\\caption{A signed graph $(G,\tau)$}\label{fig:sink}
\end{figure}
\end{example}

It is easy to see that Theorem~\ref{thm:signed:primitive} is a generalization of Theorem~\ref{thm:graph:primitive}.
Moreover, if you consider an orientation of a cycle as a signed graph, then it is an even-signed closed walk. Thus, every digraph has no odd-signed closed walk,
which implies that
Theorem~\ref{thm:signed:primitive} is  also a generalization of Theorem~\ref{thm:digraph:primitive}.

\subsection{The complete intersection property of the toric ideal $I_{(G,\tau)}$ }\label{subsec:CI}

We compute the rank of the incidence matrix of a connected signed graph first.

\begin{proposition}\label{prop:rank}
Let $(G,\tau)$ be a connected signed graph.
Then \[\mathrm{rank}(A(G,\tau))=\begin{cases}
|V(G)|-1 &\text{if there is no odd-signed closed walk in }(G,\tau),\\
|V(G)| &\text{otherwise.}
\end{cases}\]
\end{proposition}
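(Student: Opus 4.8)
The plan is to compute the rank through the dimension of the left null space of $A=A(G,\tau)$, since $\mathrm{rank}(A)=|V(G)|-\dim\{y : y^{T}A=0\}$, the rank being taken over $\mathbb{Q}$ as is relevant for $\mathrm{ht}(I_{(G,\tau)})$. A vector $y=(y_v)_{v\in V(G)}$ lies in the left null space precisely when $y_u\tau(e,u)+y_v\tau(e,v)=0$ for every edge $e=uv$. Writing $\sigma(e):=\tau(e,u)\tau(e,v)\in\{1,-1\}$ for the sign of the edge, this rewrites as $y_v=-\sigma(e)\,y_u$. First I would record the consequence along a walk: telescoping across $\bw:v_1e_1\cdots e_tv_{t+1}$ shows that any left-null vector satisfies $y_{v_{t+1}}=\lambda(\bw)\,y_{v_1}$, where the transport factor $\lambda(\bw)=(-1)^{t}\prod_{i=1}^{t}\sigma(e_i)$ depends only on $\bw$. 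Note that $\lambda$ is multiplicative under concatenation, invariant under reversal, and takes values in $\{1,-1\}$.

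The key step---and the only genuinely combinatorial one---is the identity $\lambda(\bw)=\mu(\bw)$ for every closed walk $\bw$, where $\mu$ is as in Definition~\ref{def:odd:even:walk}. I would prove it by regrouping the product $\prod_i\sigma(e_i)=\prod_i\tau(e_i,v_i)\tau(e_i,v_{i+1})$ according to vertex terms rather than edge terms: reading cyclically, the two incidence signs meeting at the $i$th vertex term are $\tau(e_{i-1},v_i)$ and $\tau(e_i,v_i)$, whose product is $+1$ exactly when $v_i$ is unbalanced and $-1$ otherwise. Hence if $\bw$ has length $t$ and exactly $k$ unbalanced vertex terms, then $\prod_i\sigma(e_i)=(-1)^{t-k}$, so $\lambda(\bw)=(-1)^{t}(-1)^{t-k}=(-1)^{k}=\mu(\bw)$. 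This is the point at which the bookkeeping must be done carefully, since it is what translates the purely sign-theoretic quantity $\lambda$ into the even/odd notion of the paper; everything else is linear algebra.

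With the identity in hand, both cases follow. If $(G,\tau)$ has no odd closed walk, then every closed walk $\bw$ has $\lambda(\bw)=\mu(\bw)=1$; fixing a root $v_1$ and comparing two $(v_1,v)$-walks $\bw,\bw'$ by means of the closed walk $\bw+(\bw')^{-1}$ shows $\lambda(\bw)=\lambda(\bw')$, so $y_v:=\lambda(\bw_{v_1\to v})\,y_{v_1}$ is well defined for each $v$. By connectivity this determines $y$ from the single free parameter $y_{v_1}$, and one checks directly that it satisfies every edge equation; hence the left null space is one-dimensional and $\mathrm{rank}(A)=|V(G)|-1$. If instead there is an odd closed walk $\bw$ at some vertex $u$, then any left-null vector obeys $y_u=\lambda(\bw)y_u=\mu(\bw)y_u=-y_u$, forcing $y_u=0$ over $\mathbb{Q}$; propagating $y_v=\lambda(\bw_{u\to v})y_u=0$ along walks from $u$ (again using connectivity) gives $y\equiv 0$, so the left null space is trivial and $\mathrm{rank}(A)=|V(G)|$.

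The main obstacle is really just the identity $\lambda(\bw)=\mu(\bw)$; once it is established the rest is standard. As useful sanity checks I would confirm that it recovers the classical facts: for an ordinary graph every $\sigma(e)=1$, so odd closed walks are exactly the odd cycles and the formula reproduces the bipartite/non-bipartite rank dichotomy, while for a digraph every $\sigma(e)=-1$ makes $\lambda(\bw)=1$ for all closed walks, giving $\mathrm{rank}(A)=|V(G)|-1$ as expected for an oriented incidence matrix.
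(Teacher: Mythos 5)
Your proposal is correct, but it takes a genuinely different route from the paper. You compute the corank via the left null space: a vector $y$ with $y^{T}A=0$ must satisfy the transport rule $y_v=-\sigma(e)y_u$ along each edge, and your key identity $\lambda(\bw)=\mu(\bw)$ --- proved by regrouping the product of incidence signs by vertex terms instead of edge terms --- correctly translates the holonomy of this rule into the paper's unbalanced-vertex parity (the bookkeeping checks out, including the cyclic pairing $\tau(e_t,v_1)\tau(e_1,v_1)$ at the base point). The paper instead works on the column side: it gets the lower bound $\mathrm{rank}(A)\ge |V(G)|-1$ from a spanning tree; in the balanced case it extracts a cycle from any $|V(G)|$ chosen columns, which is necessarily even, and exhibits an explicit alternating dependence relation over its balanced section-decomposition; in the unbalanced case it first invokes Lemma~\ref{rmk:odd:cycle} to replace the odd closed walk by an odd \emph{cycle}, then builds a unicyclic spanning subgraph and a block-triangular determinant argument, finishing with the same sign-propagation around the odd cycle that appears in your $y_u=-y_u$ step. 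Your approach buys several things: it needs no spanning-tree lower bound, no reduction from walks to cycles (the transport argument works directly with any odd closed walk, so Lemma~\ref{rmk:odd:cycle} becomes unnecessary for this proposition), and the well-definedness argument via $\bw+(\bw')^{-1}$ handles both cases uniformly; it is essentially the standard switching-function argument from signed-graph theory. What the paper's version buys is explicit column relations (the alternating sum over balanced sections) that mirror the binomial constructions used elsewhere in the text. One point worth making explicit in either proof: the rank is taken over $\mathbb{Q}$, as you note --- both your step $y_u=-y_u\Rightarrow y_u=0$ and the paper's $c_j=-c_j\Rightarrow c_j=0$ silently use characteristic $\ne 2$, which is the right setting since $\mathrm{ht}(I_A)=m-\mathrm{rank}(A)$ refers to the lattice rank of $A$.
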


See Appendix for the proof of Proposition~\ref{prop:rank}.  Due to the proposition and from the fact that ht$(I_{(G,\tau)})=m-\mathrm{rank}(A{(G,\tau)})$, we define the following.

\begin{definition}
Let $(G,\tau)$ be a connected signed graph. We say $(G,\tau)$ is a \textit{complete intersection} when $I_{(G,\tau)}$ is a (binomial) complete intersection, i.e., $I_{(G,\tau)}$ is generated by $r({G,\tau})$ binomials, where
\[r(G,\tau)=\begin{cases}
  |E(G)|-|V(G)|+1  &\text{if there is no odd-signed closed walk in }(G,\tau),\\
  |E(G)|-|V(G)| &\text{otherwise.}
\end{cases}\]
\end{definition}

For a disconnected signed graph, it is said to be a \textit{complete intersection} if every connected component is a complete intersection.
Let $\mathcal{G}^{cis}$ be the set of all graphs $G$ such that
$(G,\tau) $ is a (binomial) complete intersection for every sign $\tau$.
It is natural to ask which graphs are in $\mathcal{G}^{cis}$, and we start from basic observations.

\begin{proposition}\label{lem:each:block}
Every block  of a graph in $\mathcal{G}^{cis}$ belongs to $\mathcal{G}^{cis}$.
\end{proposition}

\begin{proof}
Let $G$ be a   graph in $\mathcal{G}^{cis}$ having a block $H$ such that $H\not\in \mathcal{G}^{cis}$. We may assume that $G$ is connected.
Then there is a sign $\tau$ of $H$ such that $I_{(H,\tau)}$ cannot be generated by $r(H,\tau)$ binomials. Let $t$ be the minimum number of binomials which generate $I_{(H,\tau)}$.
Then $t>r(H,\tau)$.

Let $\tau'$ be the sign of $G$ such that $\tau'(e,v)=\tau(e,v)$ for every $e\in E(H)$
and $\tau'(e,v)\tau'(e,w)=-1$ for every $e=vw\in E(G)\setminus E(H)$.
Let $X=\{ v\in V(G)\mid v\text{ is contained in a block other than }H\}$.
Since $G\in \mathcal{G}^{cis}\subset \mathcal{G}^{cio}$, it follows that $G[X]\in \mathcal{G}^{cio}$ by Theorem~\ref{thm:CI:digraph}(ii).
Note that $(G[X],\tau'|_X)$  can be understood as a digraph, and so $(G[X],\tau'|_X)$ has no odd-signed cycle.
Thus, \begin{eqnarray}\label{eq:r:block}
&&r(G,\tau')=r(H,\tau)+(|E(G[X])|-|X|+c)=r(H,\tau)+(|E(G)|-|E(H)|-|X|+c),\end{eqnarray} where $c=|V(H)\cap X|$.
Moreover, since $G[X]$ has at least $c$ components, we need
at least $|E(G)|-|E(H)|-|X|+c$ binomials to generate
$I_{(G[X],\tau'|_X)}$.
Hence, in order to generate $I_{(G,\tau')}$, we need at least $t+|E(G)|-|E(H)|-|X|+c$ binomials.
Since $t+|E(G)|-|E(H)|-|X|+c> r(G,\tau')$ by \eqref{eq:r:block},
we reach a contradiction to the fact that $G\in \mathcal{G}^{cis}$.
\end{proof}

\begin{proposition}\label{lem:sumevencycle}
Let $H$ be either a cycle or a $K_2$. For a connected graph $G$, let $G'$ be a $1$- or $2$-clique sum of $G$ and $H$. Let
$\tau'$ be a sign of $G'$ such that $\tau=\tau'|_{V(G)}$.
Suppose that $H$ is even-signed in $(G',\tau')$ when $H$ is a cycle.
Then $I_{(G',\tau')}$ is a complete intersection if and only if $I_{(G,\tau)}$ is a complete intersection.
\end{proposition}

\begin{proof} Suppose that $H$ is a cycle.
It is clear that
$r(G',\tau')=r(G,\tau)+1$.
Since $C$ is a primitive walk in $(G',\tau')$, its associated binomial generates $I_{(G',\tau')}$ together with a generating set of $I_{(G,\tau)}$.
If $H=K_2$, then $r(G',\tau')=r(G,\tau)$ for every sign $\tau'$ of $G'$ and the primitive walks of $(G,\tau)$ and those of $(G',\tau')$ are the same. Thus, the proposition holds.
\end{proof}

Proposition \ref{lem:sumevencycle}  implies that
for a graph $G$, if $G\not\in \mathcal{G}^{cis}$, then a graph constructed by clique sums of $G$ and cycles/$K_2$ is not in $\mathcal{G}^{cis}$. Thus the following holds.

\begin{corollary}\label{cor:sumevencycle}
For a graph $G\in \mathcal{G}^{cis}$ and an induced subgraph $H$ of $G$, if $G$ can be constructed by clique sums of $H$ and cycles/$K_2$, then $H$ belongs to $\mathcal{G}^{cis}$.
\end{corollary}

\begin{observation}\label{lem:K4}
A graph in $\mathcal{G}^{cis}$ is $K_4$-free.
\end{observation}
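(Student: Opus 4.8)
The plan is to show directly that $K_4$ admits a sign $\tau$ for which $I_{(K_4,\tau)}$ fails to be a complete intersection, which suffices since $\mathcal{G}^{cis}$ requires the complete intersection property for \emph{every} sign. First I would pick the most natural sign: take $\tau$ to be constant (equivalently, view $(K_4,\tau)$ as the ordinary graph $K_4$). Then $I_{(K_4,\tau)}$ is just the ordinary toric ideal $I_{K_4}$, and the question of whether it is a complete intersection is governed by the classical theory summarized in Subsection~\ref{subsubsec:CI}.

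For this choice I would invoke Theorem~\ref{thm:K4:ci}: since $K_4$ is $3$-regular and connected, $K_4\in\mathcal{G}^{ci}$ if and only if $K_4$ is an odd band or an even M\"obius band. The excerpt explicitly notes that $K_4$ \emph{is} an even M\"obius band, so at first glance this theorem would place $K_4$ \emph{inside} $\mathcal{G}^{ci}$ for the constant sign --- meaning the constant sign does not produce the failure I want. Hence the key step is to select a sign that is genuinely non-constant. The cleanest candidate is an orientation sign, i.e. $\tau(e,u)\tau(e,v)=-1$ for every edge $e=uv$, turning $(K_4,\tau)$ into a digraph $D$ on $K_4$. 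Now I would appeal to Theorem~\ref{thm:CI:digraph}: $K_4$ is a complete graph, hence built from clique sums of complete graphs, so $K_4\in\mathcal{G}^{cio}$ --- again a complete intersection. So neither the constant nor the orientation sign alone breaks it.

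The resolution, and the main obstacle, is therefore to exhibit a genuinely \emph{mixed} sign. The plan is to choose $\tau$ so that some edges behave as in a digraph and others as in an ordinary graph, creating both even and odd cycles among the four triangles and three $4$-cycles of $K_4$, and then to count generators against $r(K_4,\tau)$. Concretely, I would use the primitive-binomial characterization of Theorem~\ref{thm:signed:primitive} to enumerate the primitive even closed walks of $(K_4,\tau)$: condition~(i) forces $[\bw]$ to be a $1$-clique sum of cycles with each vertex in at most two cycles, and condition~(ii) forces the two halves of every nontrivial section-decomposition to be odd. For a well-chosen mixed sign one finds that the minimal generators of $I_{(K_4,\tau)}$ outnumber $r(K_4,\tau)=|E|-|V|=6-4=2$ (the non-bipartite/odd-closed-walk case), so $I_{(K_4,\tau)}$ cannot be generated by $\mathrm{ht}(I)$ elements and is not a complete intersection.

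The hard part will be verifying that the candidate sign really forces at least three independent minimal generators rather than two: this requires checking that no single even closed walk $\bw$ has a binomial $B_{\bw}$ whose vanishing locus captures all the needed relations, and that the primitive binomials one writes down are in fact a minimal generating set (not redundant modulo the ideal they generate). I would handle this by identifying three distinct primitive even closed walks whose associated binomials involve incomparable edge-supports --- so that none divides another and none lies in the ideal generated by the other two --- and then confirm via Proposition~\ref{prop:even_all_generator} that these generate $I_{(K_4,\tau)}$ while being three in number, exceeding $r(K_4,\tau)$. Establishing the exact minimal generator count, as opposed to merely a lower bound on primitive binomials, is the step most likely to need care, and I expect the computation to parallel the standard argument that $I_{K_4}$ for the mixed sign behaves like the non-complete-intersection examples flagged by the $K_{2,3}$-type obstructions in Theorem~\ref{graph:useful:facts}.
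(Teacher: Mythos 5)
Your strategy coincides with the paper's: exhibit one mixed sign $\tau$ on $K_4$, enumerate the primitive binomials of $I_{(K_4,\tau)}$ via Theorem~\ref{thm:signed:primitive}, and show they outnumber $r(K_4,\tau)=2$. Your preliminary analysis is sound — the constant sign and any orientation sign do yield complete intersections (Theorems~\ref{thm:K4:ci} and~\ref{thm:CI:digraph}), so a genuinely mixed sign is indeed forced, a point the paper does not even pause over.

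Two gaps remain, one minor and one substantive. Minor: the observation asserts that every graph in $\mathcal{G}^{cis}$ is $K_4$-free, so after $K_4\notin\mathcal{G}^{cis}$ you still need the hereditary step — a $K_4$ subgraph of a simple graph is automatically an induced subgraph, and Proposition~\ref{prop:induced:CIS} then transfers the failure; your proposal never makes this reduction. Substantive: the decisive computation is promised but not performed. You write ``for a well-chosen mixed sign one finds'' that the generators outnumber $r(K_4,\tau)$, but you never exhibit the sign, the walks, or the binomials; yet this existence claim is the entire content of the observation. The paper takes $\tau(e_4,v_1)=\tau(e_5,v_2)=\tau(e_6,v_3)=-1$ and $+1$ on all other incidences, for which Theorem~\ref{thm:signed:primitive} yields exactly three primitive walks with binomials $e_1e_5-e_3e_6$, $e_1e_4-e_2e_6$, and $e_2e_5-e_3e_4$. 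These are three linearly independent quadrics, so (the primitive binomials generating the ideal, and the ideal here being homogeneous) the minimal number of generators is at least $3>2=r(K_4,\tau)$, where $r=2$ because this sign makes the triangles odd. This also answers the worry you flag about minimal generator counts versus mere primitivity: linear independence in the degree-two graded piece settles it immediately, with no divisibility analysis needed. So your plan is executable and methodologically identical to the paper's, but as written the proof is incomplete at exactly its load-bearing step.
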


\begin{proof}
First, we show that $K_4\not\in \mathcal{G}^{cis}$. Following the labeling in Figure~\ref{fig:K4}, let $\tau$ be a sign of $K_4$ such that
\[\tau(e_4,v_1)=\tau(e_5,v_2)=\tau(e_6,v_3)=-1,\]
and all the others have sign $1$.
By Theorem~\ref{thm:signed:primitive}, there are only three primitive walks $\bw_1$, $\bw_2$, $\bw_3$, which are defined as Figure~\ref{fig:K4}.\footnote{
We note that Observation~\ref{lem:K4} is not used in the proof of Theorem~\ref{thm:signed:primitive}.
Moreover, in Figure~\ref{fig:K4} (also in the following figures of the paper), we use dashed lines and gray color to draw the rest part of the graph not belonging to  $[\bw_i]$ together to distinguish the walks easily.}
Then we have
$B_{\bw_1}= e_1e_5-e_3e_6$,  $B_{\bw_2}= e_1e_4-e_2e_6$, and $B_{\bw_3}= e_2e_5-e_3e_4$.
Note that each of three cannot be generated by the others. However,
$r(K_4,\tau)=2$, since $(K_4,\tau)$ has odd-signed cycles. Thus, $I_{(K_4,\tau)}$ is not a complete intersection.
\begin{figure}[h!]
  \centering
 \includegraphics[width=12cm,page=6]{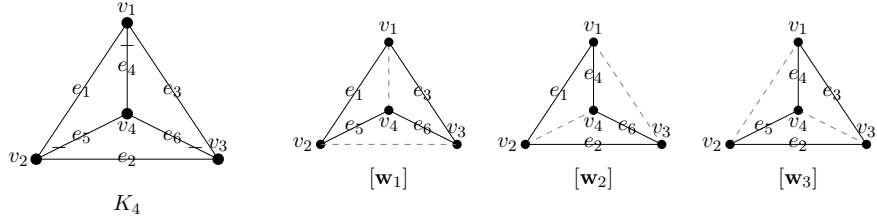}\\
  \caption{A complete graph $K_4$ and its three closed walks}
  \label{fig:K4}
\end{figure}

Suppose that there is a graph $G$  in $\mathcal{G}^{cis}$ having $K_4$ as a subgraph. We take such $G$ as a smallest one.
By Proposition~\ref{lem:each:block}, a block of $G$ with $K_4$ is in  $\mathcal{G}^{cis}$.
Thus $G$ is $2$-connected. Moreover, by the above argument, $G\neq K_4$.
Since $G\in \mathcal{G}^{cio}$, $G$ is constructed by clique sums of cycles and/or complete graphs by Theorem~\ref{thm:CI:digraph}(i).
Since $G$ is 2-connected, it follows that every clique sum to  construct  $G$ is not a 1-clique sum.
From the fact that $G\in \mathcal{G}^{ci}$,
by Theorem~\ref{graph:useful:facts}(ii),
it follows that every clique sum to construct $G$ is a 2-clique sum.
If $G$ is constructed by clique sums of exactly one $K_4$ and cycles, then by Proposition~\ref{lem:sumevencycle} and the fact that $K_4\not\in \mathcal{G}^{cis}$, it follows that $G\not\in \mathcal{G}^{cis}$, a contradiction.
Thus $G$ has at least two $K_4$, say $K$ and $K'$.
By Theorem~\ref{graph:useful:facts}(v),
$G[K\cup K']$ is in $\mathcal{G}^{ci}$.
By applying Theorem~\ref{graph:useful:facts}(i) to $G[K\cup K']$, we know that $G[K\cup K']$ is a disjoint union of $K$ and $K'$.
By Theorem~\ref{graph:useful:facts}(iv), every two vertex disjoint cycles of length three from $K$ and $K'$ are connected by two disjoint edges, which is a contradiction.
\end{proof}

We remark that $\mathcal{G}^{cis}$ is a subset of $\mathcal{G}^{ci}\cap \mathcal{G}^{cio}$ by definitions, and so
Observation~\ref{lem:K4} tells us from Theorem~\ref{thm:CI:digraph}(i) that the (connected) graphs in  $\mathcal{G}^{cis}$ are constructed by clique sums of cycles and/or $K_2$. Among those graphs, we completely characterize all graphs  in $\mathcal{G}^{cis}$.
The following considers only $2$-connected graphs in $\mathcal{G}^{cis}$, and its proof is given in
Subsection~\ref{sec:ci}.

\begin{theorem}\label{thm:characterization:CI}
For a $2$-connected graph $G$ with at least three vertices, $G\in\mathcal{G}^{cis}$ if and only if $G$ is one of {\rm(G1)}-{\rm(G5)} for some $m,n\ge 3$ (see Figure~\ref{fig:CIS_list}):
\begin{itemize}
\item[\rm (G1)] A cycle $C_n$;
\item[\rm (G2)] A $2$-clique sum of two cycles $C_n$ and $C_m$;
\item[\rm (G3)] ${}_m\CI_{n}$: the graph obtained from $C_3$ by gluing $C_m$ and $C_n$ to two distinct edges of $C_3$ using $2$-clique sum, respectively;
\item[\rm (G4)] ${}_m\CII_n$: the graph obtained from $C_4$ by gluing $C_m$ and $C_n$ to two opposite edges of $C_4$ using $2$-clique sum, respectively;
\item[\rm (G5)] ${}_m\CIII_n$: the graph obtained from ${}_m\CII_n$ by adding a diagonal edge of the middle $C_4$.
\end{itemize}
\end{theorem}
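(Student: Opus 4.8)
\emph{Plan.} The proof splits into the easy containment (each listed graph lies in $\mathcal{G}^{cis}$) and the structural converse, and I would open with a reduction common to both. Since by definition $\mathcal{G}^{cis}\subseteq\mathcal{G}^{ci}\cap\mathcal{G}^{cio}$, Observation~\ref{lem:K4} and Theorem~\ref{thm:CI:digraph} apply simultaneously: a $2$-connected $G\in\mathcal{G}^{cis}$ is $K_4$-free and is a clique sum of complete graphs and cycles, so using $K_3=C_3$ and discarding the trivial $K_1,K_2$ pieces it is obtained from cycles by repeatedly forming $2$-clique sums, i.e.\ by gluing cycles along single edges (any $0$- or $1$-clique sum would create a cut vertex, contradicting $2$-connectivity). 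In particular the constituent cycles $C^{(1)},\dots,C^{(k)}$ satisfy $k=|E(G)|-|V(G)|+1$, and a theta configuration in which two cycles overlap in a path of length $\ge2$ with non-adjacent hubs simply cannot occur in such a construction. I would then record, once and for all, how $r(G,\tau)$ of Proposition~\ref{prop:rank} depends on the parities $\mu(C^{(i)})$, since $r(G,\tau)$ equals $k$ when $(G,\tau)$ has no odd closed walk and $k-1$ otherwise.

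For the forward (``if'') implication I would treat (G1)--(G5) by directly producing $r(G,\tau)$ generating binomials, which suffices because a complete intersection is \emph{defined} as being generated by $\mathrm{ht}=r(G,\tau)$ elements; thus the only real content is generation. Fixing $\tau$, Theorem~\ref{thm:signed:primitive} shows that every primitive even walk is supported either on a single constituent cycle or on the even ``union'' of a consecutive block of cycles, and condition~(ii) pins down exactly which sign patterns make such a walk even. I would select one binomial for each independent even cycle class (merging a maximal run of consecutive odd cycles into the single even cycle they bound), and show by an induction that peels off an end cycle---whose non-shared edges are private and do not occur elsewhere---that the selected binomials generate every other primitive binomial. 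The count then matches $r(G,\tau)$ in each parity pattern. The graph (G5) is handled by viewing its central diamond as two triangles glued along the diagonal, so that it again becomes a chain of cycles amenable to the same induction.

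The substance is the converse, which I expect to be the main obstacle. Here I would show that a graph built from cycles by edge-gluings but \emph{not} equal to one of (G1)--(G5) always admits a sign $\tau$ for which $I_{(G,\tau)}$ needs more than $r(G,\tau)$ generators, mimicking the computation in Observation~\ref{lem:K4}: one exhibits $r(G,\tau)+1$ primitive even walks that are pairwise incomparable, none of which lies in the ideal generated by the others (indispensable binomials), forcing the minimal number of generators to exceed the height. The configurations to eliminate are organized by $k$ and by the local geometry of the gluings. For $k=1,2$ one gets exactly (G1),(G2). For $k=3$ one must rule out three cycles sharing a common edge, a middle cycle $C_4$ whose two attaching edges are \emph{adjacent}, and any middle cycle of length $\ge5$---equivalently, both arcs between the two attaching edges must have length at most one---leaving precisely (G3) and (G4). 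For $k\ge4$ one shows that every admissible gluing either contains an already-excluded induced configuration (invoking Proposition~\ref{prop:induced:CIS} together with the $K_{2,3}$-freeness of Theorem~\ref{graph:useful:facts}(ii)) or is exactly the diamond-based chain (G5).

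The hard part is the bookkeeping in this case analysis: for each forbidden local configuration one must name an explicit sign $\tau$ and verify that the associated primitive binomials are genuinely indispensable. What I would set up first, to avoid dispatching the many small cases ad hoc, is a uniform indispensability criterion---for example via the distinct multidegrees of the candidate binomials, or via a reduction of each forbidden configuration, through Proposition~\ref{prop:induced:CIS}, to the already-settled $K_4$ and $K_{2,3}$ obstructions---so that a single lemma handles the bulk of the elimination step.
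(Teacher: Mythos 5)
Your opening reduction (via $\mathcal{G}^{cis}\subseteq\mathcal{G}^{ci}\cap\mathcal{G}^{cio}$, Observation~\ref{lem:K4} and Theorem~\ref{thm:CI:digraph}, to $2$-clique-sum chains of cycles) matches the paper exactly, but your plan for the converse contains a genuine gap. You propose to eliminate each forbidden chain by constructing an explicit sign $\tau$ and exhibiting $r(G,\tau)+1$ indispensable binomials, with a ``uniform indispensability criterion'' that reduces every forbidden configuration ``to the already-settled $K_4$ and $K_{2,3}$ obstructions.'' That reduction cannot work: the essential obstructions in this problem are the graphs $G_0$, $G_1$, $G_2$ of Figures~\ref{fig:EXAMPLE2} and~\ref{fig:AB} (three triangles around a hexagon; two odd cycles meeting at a single vertex or joined by one path), and all of these are simultaneously $K_4$-free and $K_{2,3}$-free, so no argument funneling through those two obstructions eliminates them. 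The paper in fact constructs no bad sign at all in the converse; it exploits $\mathcal{G}^{cis}\subset\mathcal{G}^{ci}$ and quotes the structural facts of Theorem~\ref{graph:useful:facts} --- not only $K_{2,3}$-freeness (ii), but crucially the edge bound (i) and the odd-cycle connection conditions (iii) and (iv) --- plus a single run of Algorithm~\ref{thm:CI_Graphs} showing $G_0\notin\mathcal{G}^{ci}$ (Example~\ref{ex}). Moreover, your finiteness reduction invokes only Proposition~\ref{prop:induced:CIS}, but passing to induced subgraphs cannot shorten long cycles; you also need the ear-contraction statement, Proposition~\ref{lem:that_lemma}(i), which the paper applies at every stage (the graphs $H^*$ and $F^{(i)}_\ell$ in Claims~\ref{claim:edge_two_cycle}--\ref{claim:vertex_cycle}). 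Without these inputs, the cases $k=3$ with a middle cycle of length at least $5$, and all chains of $k\ge 4$ cycles other than (G5), are not actually disposed of by your outline.

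There is a second, smaller gap in your forward direction: your classification of primitive walks is incomplete. Theorem~\ref{thm:signed:primitive}(i) permits cycles of length two in $[\bw]$, i.e.\ doubly traversed edges, so when two odd cycles of the chain are non-adjacent there are primitive walks joining them through a doubly traversed connector --- exactly the walks $\mathbf{x}_1\sim\mathbf{x}_3$ in the paper's treatment of (G$5'$) (Figure~\ref{fig:G5}). These are supported neither on a single constituent cycle nor on the ``even union of a consecutive block of cycles,'' and any generating argument must produce their binomials too; the paper does this via Corollary~\ref{rmk2:Lem4_1}, e.g.\ $B_{\bx_1},B_{\bx_2}\in\left<B_{\bw_3},B_{\bw_4}\right>$ and $B_{\bx_3}\in\left<B_{\bw_1},B_{\bw_6}\right>$. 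Relatedly, ``peeling off an end cycle'' is only straightforward when that cycle is even in $(G,\tau)$ (this is Lemma~\ref{lem:sumevencycle}); when both end cycles are odd the peeling collapses, and the paper instead enumerates all primitive walks and applies the parallelogram-type relation of Corollary~\ref{rmk2:Lem4_1} (with the $\mu(\mathbf{a}),\mu(\mathbf{b})$ case split for (G$5'$)). Your plan names no mechanism playing the role of that corollary, and it is the engine of the entire ``if'' direction.
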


\begin{figure}[h!]
\centering
\includegraphics[width=10cm,page=7]{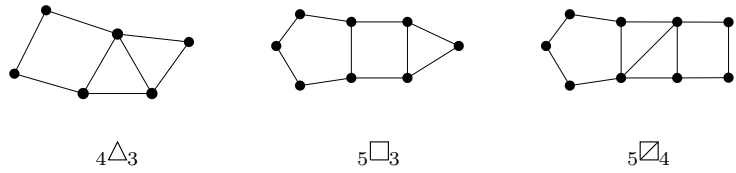}\\
\caption{Some graphs in Theorem~\ref{thm:characterization:CI}}
 \label{fig:CIS_list}
\end{figure}

Now we characterize all graphs in $\mathcal{G}^{cis}$.
The  proof  of  Theorem~\ref{thm:cis:general:graph}  is  given  in  Subsection~\ref{sec:general}, and see Figure~\ref{fig:cis:all} for some graphs described in the theorem.

\begin{theorem}\label{thm:cis:general:graph}
For a graph $G$, $G$ is in $\mathcal{G}^{cis}$ if and only if every connected component $G'$ of $G$ is one of the following:
\begin{itemize}
\item[\rm (i)] $G'$ is a tree.
\item[\rm (ii)] $G'$ has exactly one nonedge block and it is isomorphic to one of {\rm(G1)}$\sim${\rm(G5)}.
\item[\rm (iii)] $G'$ has exactly two nonedge blocks $B$ and $B'$, each of which is isomorphic to {\rm(G1)} or {\rm(G2)}.
When $B$ is {\rm(G2)},
the vertex $v$ of $B$ closest to $B'$ is on a triangle of $B$ and $\deg_{B}(v)=2$.
\end{itemize}
\end{theorem}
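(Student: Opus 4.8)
The plan is to reduce the general (possibly disconnected) statement to the $2$-connected case already settled in Theorem~\ref{thm:characterization:CI}, by analyzing how blocks of a connected graph interact with the complete intersection property. Since $\mathcal{G}^{cis}\subseteq\mathcal{G}^{ci}\cap\mathcal{G}^{cio}$ and a disconnected graph is a complete intersection iff each component is (by definition), I would first dispense with the disconnected case and fix a connected graph $G'$. The governing fact is that $r(G,\tau)$ counts generators differently according to whether $(G,\tau)$ has an odd closed walk, i.e.\ whether some block is non-bipartite under $\tau$; since we quantify over \emph{all} signs $\tau$, I would use that a block $B$ is non-bipartite for some $\tau$ precisely when $B$ is not a single edge (a nonedge block can always be made to carry an odd closed walk). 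This is the source of the ``nonedge block'' bookkeeping in (i)--(iii).

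Let me describe the forward direction first. Suppose $G'\in\mathcal{G}^{cis}$. By Proposition~\ref{prop:induced:CIS} every induced subgraph is also in $\mathcal{G}^{cis}$, and by Observation~\ref{lem:K4} it is $K_4$-free; combined with Theorem~\ref{thm:CI:digraph} (via $\mathcal{G}^{cis}\subseteq\mathcal{G}^{cio}$) each block is a clique sum of cycles, hence itself a cycle, a $K_2$, or one of the $2$-connected shapes. Theorem~\ref{thm:characterization:CI} then forces every nonedge block to be one of (G1)--(G5). The crux is bounding the \emph{number} of nonedge blocks and constraining how they are glued. For this I would exploit Theorem~\ref{graph:useful:facts}(v), which caps the non-bipartite blocks at two, together with the counting inequality (i) of that theorem and the absence of $K_{2,3}$ (ii). The key observation is that two distinct nonedge blocks, joined along a path of bridges, can each be signed to produce an odd closed walk, and the two resulting odd walks interact: a primitive even walk threading both blocks (via the ``$1$-clique sum of two odd closed walks'' picture of Theorem~\ref{thm:signed:primitive} and Figure~\ref{fig:image}) produces an extra generator that cannot be absorbed, unless the blocks are of the mildest possible type (G1) or (G2). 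This is why (iii) only permits cycles and $2$-clique-sums of two cycles, and why a third nonedge block is impossible.

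For the converse I would verify directly that each graph listed in (i)--(iii) lies in $\mathcal{G}^{cis}$ for every $\tau$. Case (i) is immediate since a tree is a forest and its incidence matrix has full column rank, so $I_{(G',\tau)}=0$ is trivially a complete intersection. Case (ii) is exactly Theorem~\ref{thm:characterization:CI} once one checks that bridges and pendant trees attached to a $2$-connected nonedge block change neither $r(G,\tau)$ nor the generator count (adding a bridge raises $|E|$ and $|V|$ by one while adding no new even closed walk, hence adds no generator and leaves the complete intersection property intact). The genuinely new content is case (iii): here I must exhibit, for every $\tau$, a generating set of $I_{(G',\tau)}$ of size $r(G',\tau)$. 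Because the two nonedge blocks are each bipartite-or-odd cycles/$2$-sums, any even closed walk decomposes along the bridge path, and I would construct the generators block-by-block, using the primitive walks inside each block (characterized by Theorem~\ref{thm:signed:primitive}) plus at most one ``crossing'' walk; the degree and triangle-placement condition on $v$ guarantees that this crossing walk is forced to be unique and that no redundant even walk spanning both blocks survives.

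\textbf{The main obstacle} I anticipate is the last part of the forward direction: ruling out a \emph{third} nonedge block and excluding the ``wrong'' attachment in (iii). The difficulty is that these are not purely local block conditions—they depend on how two odd-signable blocks can be simultaneously traversed by a single even primitive walk, so the counting inequality of Theorem~\ref{graph:useful:facts}(i) must be applied to a cleverly chosen sign $\tau$ that maximizes the number of non-bipartite components after vertex deletions. Producing the specific $\tau$ (and the specific even closed walk whose binomial is a genuinely new generator, forcing the generator count above $r(G',\tau)$) will require careful case analysis of the position of the nearest vertex $v$ on the block $B$ and its degree, mirroring the proof technique already used for $K_4$ in Observation~\ref{lem:K4}.
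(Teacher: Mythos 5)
Your outline has the right skeleton (reduce to components, blocks are (G1)--(G5) via Proposition~\ref{prop:induced:CIS} and Theorem~\ref{thm:characterization:CI}, then bound and constrain the nonedge blocks), but the two steps you yourself flag as the crux contain genuine gaps. First, in the forward direction you propose to apply Theorem~\ref{graph:useful:facts} ``to a cleverly chosen sign $\tau$.'' That theorem is a statement about the \emph{unsigned} graph $G\in\mathcal{G}^{ci}$: whether a block is non-bipartite is a property of the underlying graph and cannot be altered by any choice of sign, so choosing $\tau$ buys you nothing there. The missing device is Proposition~\ref{lem:that_lemma}(ii): $\mathcal{G}^{cis}$ is preserved when an edge between two degree-two vertices is un-contracted, so the paper \emph{subdivides} edges of the relevant cycles to make them genuinely odd cycles of the underlying graph, producing a graph still in $\mathcal{G}^{cis}\subset\mathcal{G}^{ci}$ with three non-bipartite blocks, contradicting (v) of Theorem~\ref{graph:useful:facts}. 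Without this transfer to the unsigned world, ruling out a third nonedge block (which may well be an even cycle, hence bipartite) does not follow. Likewise, the attachment condition in (iii) (when $B$ is (G2), the nearest vertex $v$ lies on a triangle with $\deg_B(v)=2$) is obtained in the paper not by an ad hoc $K_4$-style sign construction but by contracting ears (Proposition~\ref{lem:that_lemma}(i)) down to one of eight concrete graphs $G_1\sim G_8$ and verifying each is not in $\mathcal{G}^{ci}$ via Algorithm~\ref{thm:CI_Graphs}. Your plan acknowledges this as ``the main obstacle'' but supplies neither the finite excluded list nor a mechanism replacing it; as written the forward direction is incomplete at exactly its hardest point.

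Second, in the converse your claim that the block-by-block generators need ``at most one crossing walk,'' forced to be unique by the degree and triangle condition, is false. When both nonedge blocks carry sign-odd cycles, \emph{every} pair consisting of an odd closed walk in $B$ and one in $B'$, concatenated through the connecting path, yields an even primitive walk (this is the $1$-clique-sum-of-two-odd-walks picture of Theorem~\ref{thm:signed:primitive}); in the paper's case (H$3$) there are up to twelve primitive walks, eight of which cross between the blocks. The complete intersection property is then not a matter of uniqueness of crossings but of \emph{redundancy}: the paper first uses Lemma~\ref{lem:sumevencycle} to splice off sign-even cycles, then splits into cases by the parities $\mu(\mathbf{a})$, $\mu(\mathbf{b})$, enumerates all primitive walks, and invokes the syzygy-type Corollary~\ref{rmk2:Lem4_1} repeatedly to show that $r(G,\tau)$ of the associated binomials generate the rest. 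Your sketch omits this case analysis and the generation mechanism entirely, so the converse for case (iii) would fail as proposed even though the statement is true.
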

\begin{figure}[ht!]
  \centering
 \includegraphics[width=17cm,page=8]{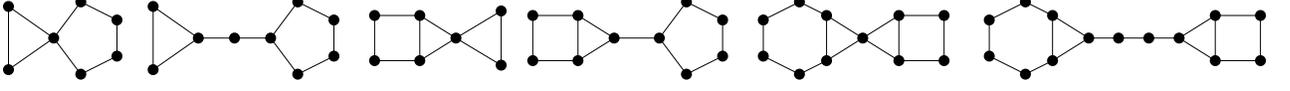}\\
  \caption{Examples of graphs in $\mathcal{G}^{cis}$ satisfying Theorem~\ref{thm:cis:general:graph}(iii)} \label{fig:cis:all}
\end{figure}

We remark that from the structures of the graphs in Theorem~\ref{thm:cis:general:graph}, it follows that every induced subgraph of a graph in $\mathcal{G}^{cis}$ belongs to $\mathcal{G}^{cis}$.

Now, we finish the section by noting that it is not difficult to find graphs in $\mathcal{G}^{ci}\cap \mathcal{G}^{cio}$ which are not in $\mathcal{G}^{cis}$. A reader may already notice that $K_4$ is such an example by Theorems~\ref{thm:K4:ci}~and~\ref{thm:CI:digraph}(i), and Observation~\ref{lem:K4}.
The following properties not only are helpful to understand Example~\ref{ex} but also may give an idea to find graphs in $(\mathcal{G}^{ci}\cap \mathcal{G}^{cio})\setminus \mathcal{G}^{cis}$.

Let $\mathbf{p}:v_{0}e_{1}v_{1}\cdots e_{t}v_{t}$
be a path in $G$ of length at least two such that $v_{0}v_{t}\not\in E(G)$ and $\deg_G(v_i)=2$ for each $i \in[{t-1}]$ as depicted in Figure \ref{fig:ear}.
We call such path an \textit{ear} of $G$, and
we denote by $G/\mathbf{p}$ the graph obtained from $G$ by deleting the vertices $v_{1}$, $\ldots$, $v_{t-1}$ and adding an edge between $v_{0}$ and $v_{t}$. We sometimes call $G/\mathbf{p}$ a \textit{contraction} of $G$ by $\mathbf{p}$. The proof of the following proposition is given in Appendix.

\begin{proposition}\label{lem:that_lemma}
Let  $\mathbf{p}:v_{0}e_{1}v_{1}\cdots e_{t}v_{t}$ ($t\ge 2$) be an ear of a graph  $G$.
Then the following hold:
\begin{itemize}
    \item[\rm(i)] If $G \in \mathcal{G}^{cis}$, then $G/\mathbf{p} \in \mathcal{G}^{cis}$.
    (Equivalently, if a graph is not in $\mathcal{G}^{cis}$, then its subdivision is not in $\mathcal{G}^{cis}$.)
    \item[\rm(ii)] If $t\ge 3$ and $G/\mathbf{q} \in \mathcal{G}^{cis}$, where $\mathbf{q}=\mathbf{p}-v_t$, then $G \in \mathcal{G}^{cis}$.  (Equivalently, if  $G\not\in\mathcal{G}^{cis}$, then the graph obtained by contracting an edge $e=uv$ with $\deg_G(u)=\deg_G(v)=2$ is not in $\mathcal{G}^{cis}$.)
    \end{itemize}
\end{proposition}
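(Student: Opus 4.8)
The plan is to compare $I_{(G,\tau)}$ with the toric ideal of the contracted graph $\bar G$ (either $G/\mathbf p$ or $G/\mathbf q$) at the level of their kernel lattices, and to transfer the complete intersection property through an explicit lattice isomorphism. Two general tools drive everything. First, $I_{(G,\tau)}$ is \emph{switching invariant}: negating all incidences $\tau(e,v)$ at a fixed vertex $v$ only negates the $v$-row of $A(G,\tau)$, hence fixes $\ker A(G,\tau)$ and $I_{(G,\tau)}$, so we may normalise signs by switching and may assume $G$ connected. Second, along the ear the degree-$2$ vertices force strong kernel relations: at an internal vertex $v_i$ the equation $\tau(e_i,v_i)b_{e_i}+\tau(e_{i+1},v_i)b_{e_{i+1}}=0$ gives $b_{e_{i+1}}=\eta_{i+1}b_{e_i}$ with $\eta_{i+1}\in\{1,-1\}$, equal to $1$ exactly when $v_i$ is balanced. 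Thus every kernel vector has $b_{e_1},\dots,b_{e_t}$ equal up to a fixed sign pattern $\eta=(\eta_1,\dots,\eta_t)$, and the ear couples to the rest only through $v_0$ (via $e_1$) and $v_t$ (via $e_t$, with accumulated sign $\eta_t$). Reading off these two end-signs produces a matching sign on the contracted edge, and the map $\psi$ that expands the coordinate of that edge into the ear coordinates according to $\eta$, and is the identity elsewhere, is an isomorphism of kernel lattices. Using the rank formula (Proposition~\ref{prop:rank}) one checks that the operations below change neither $|E|-|V|$ nor the existence of an odd closed walk, so the heights agree and $r$ is preserved throughout.

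For (i) I would, for each sign $\sigma$ of $G/\mathbf p$, choose $\tau$ on $G$ agreeing with $\sigma$ off the ear and making every internal ear vertex balanced (possible since consecutive incidences may be set to opposite signs), with end-signs matching $\sigma$. Then $\eta\equiv 1$, so $\psi$ is realised by the honest monomial substitution $\phi(e)=e_1e_2\cdots e_t$ fixing the other variables. Writing $R=K[E(G/\mathbf p)]$ and $S=K[E(G)]$, this $\phi$ identifies $R$ with the subring $K[\,e_1\cdots e_t,\,\dots\,]$ of $S$, over which $S$ is free (basis: the ear monomials with minimal exponent $0$), hence faithfully flat, and $I_{(G,\tau)}=\phi\!\left(I_{(G/\mathbf p,\sigma)}\right)S$ because $\psi$ carries generating binomials to generating binomials. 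Flatness together with $\phi(\mathfrak m_R)\subseteq\mathfrak m_S$ gives $I_{(G,\tau)}\otimes_S K\cong I_{(G/\mathbf p,\sigma)}\otimes_R K$, so the two ideals share the same minimal number of generators; with the equal heights this yields that $(G/\mathbf p,\sigma)$ is a complete intersection whenever $(G,\tau)$ is. Since $G\in\mathcal G^{cis}$ makes $(G,\tau)$ a complete intersection for the balanced lift, and $\sigma$ is arbitrary, $G/\mathbf p\in\mathcal G^{cis}$.

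For (ii) the same isomorphism $\psi$ is available, but $\tau$ is now an arbitrary sign, so $\eta$ may contain $-1$'s. I would fix a sign $\tau'$ on $G/\mathbf q$ whose single internal ear vertex has the same balance-parity as the string $v_1,\dots,v_{t-1}$ of $G$; by the first paragraph this keeps $r(G,\tau)=r(G/\mathbf q,\tau')$ and makes $\psi\colon L(G/\mathbf q,\tau')\to L(G,\tau)$ a lattice isomorphism. Because $(G/\mathbf q,\tau')$ is a complete intersection, it has a generating set of $r$ binomials, and it suffices to show these lift through $\psi$ to $r$ binomials generating $I_{(G,\tau)}$; as $\mu\ge\mathrm{ht}=r$ always holds, this forces $(G,\tau)$ to be a complete intersection. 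When $\eta\equiv 1$ the lift is the flat substitution of part (i). The genuinely new case, and the main obstacle, is $\eta\not\equiv 1$, where $\psi$ realises the \emph{Laurent} substitution $f\mapsto \prod_{\eta_j=1}e_j\cdot\prod_{\eta_j=-1}e_j^{-1}$, so the naive lift of a reduction path can get stuck because intermediate monomials need not carry balanced exponents on the interior ear edges.

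To overcome this I would exploit that the interior ear edges are forced to move together in the lattice: every binomial of $I_{(G,\tau)}$ has the same exponent (up to $\eta$) on all of $e_1,\dots,e_{t-1}$, so these edges can enter a minimal generating set only through the single ear binomial attached to the unique cycle running along the ear. Thus in a complete intersection generating set of $I_{(G/\mathbf q,\tau')}$ the edge $f$ occurs in exactly one generator; replacing that generator by its $\psi$-image and leaving the rest untouched produces $r$ binomials which still generate, since the new variables $e_1,\dots,e_{t-1}$ occur only there and every relation among them is already forced by the internal-vertex equations. The hypothesis $t\ge 3$ enters precisely here: it guarantees an interior edge of the ear with both endpoints of degree $2$, and together with $v_0v_t\notin E(G)$ this keeps $f$ off every triangle, preserving the uniqueness of the ear generator. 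A clean alternative, if no circularity with Subsection~\ref{sec:general} is incurred, is to verify both parts directly against Theorem~\ref{thm:cis:general:graph}: contracting or subdividing an ear only alters the length of the cycle carrying it, and since an ear forces that cycle to have length at least four it never meets a triangle, so the block types {\rm(G1)}--{\rm(G5)} and the triangle condition in {\rm(iii)} are preserved; the only point needing care there is that contraction never shortens a cycle below length three, which holds because the complementary arc of the ear has length at least two.
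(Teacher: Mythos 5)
Your part (i) is correct and follows essentially the paper's route: the same lift of an arbitrary sign $\sigma$ on $G/\mathbf{p}$ to a sign $\tau$ on $G$ making every internal ear vertex balanced, so that every kernel vector of $A(G,\tau)$ has equal coordinates on $e_1,\dots,e_t$ and hence $I_{(G,\tau)}=\phi\bigl(I_{(G/\mathbf{p},\sigma)}\bigr)S$ under the substitution $e^*\mapsto e_1\cdots e_t$. Your faithful-flatness argument for the equality of minimal generator numbers is a legitimate (and more explicit) justification of what the paper asserts in one line, and the height bookkeeping via Proposition~\ref{prop:rank} is right.

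The genuine gap is in part (ii), exactly at the point you yourself flag as the main obstacle. Your fix rests on the claim that in a complete intersection generating set of $I_{(G/\mathbf{q},\tau')}$ the contracted edge $f$ occurs in exactly one generator, justified by ``the single ear binomial attached to the unique cycle running along the ear.'' There is no such unique cycle: the ear meets the rest of the graph only at $v_0$ and $v_t$, and in general many even primitive walks traverse it. For example, in {\rm(G3)} with both side triangles odd, the $4$-cycle through the degree-two apex of a side triangle and the figure-eight walk obtained by concatenating the two odd triangles at their common vertex both traverse the same ear edges, so several candidate generators contain those edges; your argument neither rules out a CI generating set in which $f$ occurs twice nor shows that one with a single occurrence can always be chosen. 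Moreover, even granting uniqueness, the concluding step (``every relation among them is already forced by the internal-vertex equations'') is precisely the lifting-of-syzygies assertion that needs proof when the substitution is Laurent, and it is not argued. The paper closes this differently: choosing $\tau'$ so that the surviving degree-two vertex $v_{t-1}$ carries the parity $(-1)^k$ of the whole ear, it obtains a parity-preserving bijection between even closed walks of $(G,\tau)$ and of $(G/\mathbf{q},\tau')$, hence between the associated binomials $B_{\bw}$ (the ear edges always entering in one fixed block pattern), and transfers generation through this bijection uniformly for all generating sets of primitive binomials, rather than by surgery on one distinguished generator. Two smaller points: the hypothesis $t\ge 3$ is needed so that $\mathbf{q}=\mathbf{p}-v_t$ still has length at least two, making $G/\mathbf{q}$ a well-defined simple graph that retains the degree-two vertex $v_{t-1}$ in which the parity can be encoded --- not to keep $f$ off triangles; and your proposed alternative via Theorem~\ref{thm:cis:general:graph} is indeed circular, since the paper's proofs of Theorem~\ref{thm:characterization:CI} and Theorem~\ref{thm:cis:general:graph} both invoke the present proposition.
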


\begin{example}\label{ex}
Let $G$ be the graph in Figure~\ref{fig:EXAMPLE2}. Then $G\not\in \mathcal{G}^{cis}$ and
$G\in \mathcal{G}^{ci}\cap \mathcal{G}^{cio}$.

First, we consider the graph $G_0$ in Figure~\ref{fig:EXAMPLE2}.
Note that if $G_0\not\in \mathcal{G}^{ci}$ then $G_0\not\in \mathcal{G}^{cis}$, which also implies that by  Proposition~\ref{lem:that_lemma}(i), $G\not\in \mathcal{G}^{cis}$.
Thus, it is sufficient to show that $G_0\not\in \mathcal{G}^{ci}$ by Algorithm~\ref{thm:CI_Graphs}.
Note that for each $i\in\{1,3,5\}$,
vertex $v_i$ of $G_0$ has degree two and $b(G_0-v_i)=b(G_0)$.
We apply the algorithm to $G_0$ with $v_1$.
Then $W=\{v_1,v_2,v_4,v_6\}$ and a shortest closed walk $\bw_1$ of even length with $V(\bw_1)=W$ is a cycle of length 4. Its associated binomial is $B_{\bw_1}=e_1e_8-e_6e_{7}$.
Similarly, by considering the vertex $v_3$  and $v_5$ one by one, finally, we have $\mathcal{B}=\{B_{\bw_1},B_{\bw_2},B_{\bw_3}\}$ where  $B_{\bw_2}=e_2e_8-e_3e_9$ and
$B_{\bw_3}=e_4e_9-e_5e_7$. It remains to check if $I_G=\left< \mathcal{B} \right>$.
However, $B_{\bw_4}$ cannot be generated by $\mathcal{B}$, where $B_{\bw_4}=e_1e_3e_5-e_2e_4e_6$ is the primitive binomial associated with $\bw_4: v_1e_1v_2e_2v_3e_3v_4e_4v_5e_5v_6e_6v_1$.
Thus, Algorithm~\ref{thm:CI_Graphs} returns \textsc{False}, as a desired one.

\begin{figure}[ht!]
  \centering
 \includegraphics[width=17cm,page=9]{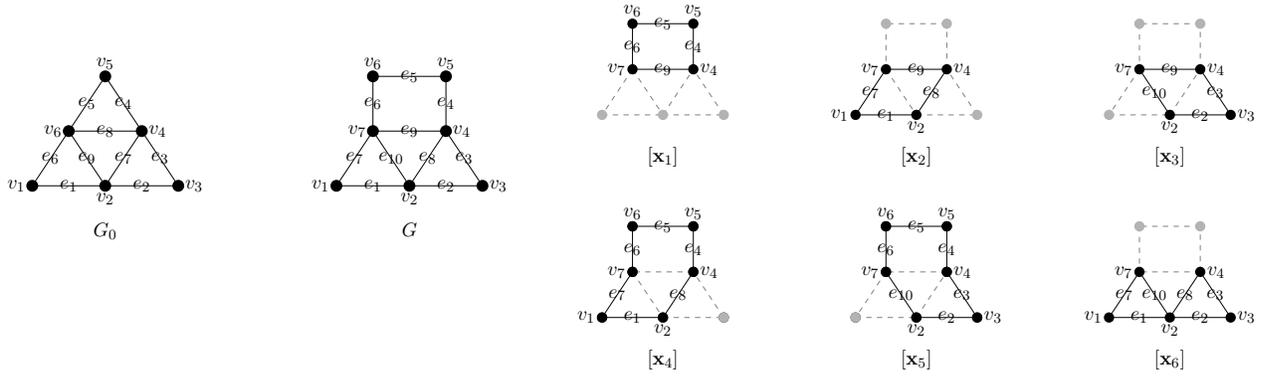}\\
 \caption{Graphs $G_0$, $G$ and primitive walks in $G$}\label{fig:EXAMPLE2}
\end{figure}

Now, we will show that
$G\in \mathcal{G}^{ci}\cap \mathcal{G}^{cio}$. Since $G$ is constructed by clique sums of cycles, $G\in\mathcal{G}^{cio}$ by Theorem~\ref{thm:CI:digraph}(i). It remains to check that $G\in \mathcal{G}^{ci}$.
Note that $G$ has exactly six primitive walks $\bx_1\sim \bx_6$, defined as  Figure~\ref{fig:EXAMPLE2}.
Then one can check  from Corollary~\ref{rmk2:Lem4_1} that
\[B_{\bx_4}\in \left<  B_{\bx_1},B_{\bx_2}\right>, \qquad
B_{\bx_5}\in \left<B_{\bx_1}, B_{\bx_3}\right>, \qquad
B_{\bx_6}\in \left<B_{\bx_2}, B_{\bx_3}\right>.\]
Thus, $I_G=\left<B_{\bx_1},B_{\bx_2},B_{\bx_3}\right>$, which implies that $I_G$ is a complete intersection.
\end{example}

\section{Properties of walks in a signed graph}\label{sec:property}

In this section, we investigate properties of even-signed closed walks in a signed graph $(G,\tau)$, which play an important role in the following section.

\begin{lemma}\label{eq:sign:product:closed}
In a signed graph $(G,\tau)$, for two closed walks  $\bw$ and $\bw'$ sharing a vertex,
$\mu(\bw+\bw')=\mu(\bw)\mu(\bw')$.
\end{lemma}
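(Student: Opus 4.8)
The plan is to reduce the statement to a clean multiplicative formula for $\mu$ in terms of a single sign assigned to each edge. For each edge $e=uv$ of $G$, set $\sigma(e)=\tau(e,u)\tau(e,v)\in\{1,-1\}$; this is well defined and does not depend on any walk. I claim that for every closed walk $\bw$ of length $\ell$,
\[
\mu(\bw)=(-1)^{\ell}\prod_{e\in E(\bw)}\sigma(e),
\]
where the product runs over the multiset $E(\bw)$ of edge terms, so that each edge is counted with its multiplicity in $\bw$. Once this identity is established, the lemma is immediate: if $\bw$ and $\bw'$ share a vertex, then $\bw+\bw'$ is a closed walk whose length is $\ell(\bw)+\ell(\bw')$ and whose edge multiset is the disjoint union $E(\bw)\uplus E(\bw')$, so the formula factors as $\mu(\bw+\bw')=\mu(\bw)\mu(\bw')$.

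To prove the displayed identity, I would first rewrite $\mu(\bw)$ as a product of local contributions, one per vertex term. Writing $\bw:v_1e_1v_2\cdots e_\ell v_1$ with indices read cyclically, the vertex term $v_i$ is unbalanced exactly when $\tau(e_{i-1},v_i)\tau(e_i,v_i)=1$; hence the factor $-\tau(e_{i-1},v_i)\tau(e_i,v_i)$ equals $-1$ when $v_i$ is unbalanced and $+1$ otherwise. Therefore $\mu(\bw)=\prod_{i=1}^{\ell}\bigl(-\tau(e_{i-1},v_i)\tau(e_i,v_i)\bigr)=(-1)^{\ell}\prod_{i=1}^{\ell}\tau(e_{i-1},v_i)\tau(e_i,v_i)$. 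The second step is a regrouping of this product by edge terms rather than by vertex terms: each edge term $e_j=v_jv_{j+1}$ contributes $\tau(e_j,v_j)$ (coming from the vertex term $v_j$) and $\tau(e_j,v_{j+1})$ (coming from the vertex term $v_{j+1}$), so its total contribution is $\tau(e_j,v_j)\tau(e_j,v_{j+1})=\sigma(e_j)$. Collecting these gives $\prod_{i=1}^{\ell}\tau(e_{i-1},v_i)\tau(e_i,v_i)=\prod_{j=1}^{\ell}\sigma(e_j)=\prod_{e\in E(\bw)}\sigma(e)$, which is the claim.

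The only point demanding care is this regrouping: one must track the cyclic indexing, so that the junction at $v_1$ between $e_\ell$ and $e_1$ is handled like every other vertex term, and remember that edges appearing several times in $\bw$ contribute $\sigma$ once per occurrence, which is exactly why the multiset $E(\bw)$, and not the underlying edge set, appears. A purely local alternative avoids the global formula and instead compares the unbalanced status of the shared vertex directly: with $a=\tau(e_1,v)$ and $b=\tau(e_\ell,v)$ at the ends of $\bw$, and $c,d$ the analogous signs for $\bw'$, all internal vertex terms retain their status in $\bw+\bw'$, and the net change in the count of unbalanced terms is $[bc=1]+[da=1]-[ab=1]-[cd=1]$; this is even because $(ab)(bc)(cd)(da)=(abcd)^2=1$ forces an even number of the four products $ab,bc,cd,da$ to equal $1$. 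Either route yields $\mu(\bw+\bw')=\mu(\bw)\mu(\bw')$.
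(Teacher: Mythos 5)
Your proposal is correct, and your primary argument takes a genuinely different route from the paper. The paper proves the lemma by a purely local computation at the shared vertex $u$: it writes $\mu(\bw+\bw')$ as $\mu(\bw)\mu(\bw')$ multiplied by four correction factors of the form $-\tau(\cdot,u)\tau(\cdot,u)$ --- two removing the old unbalanced-status contributions of $u$ as basepoint of $\bw$ and of $\bw'$, two inserting the new junction contributions in $\bw+\bw'$ --- and observes that each of the four sign values appears twice, so the corrections cancel. Your ``purely local alternative'' with $a,b,c,d$ and the identity $(ab)(bc)(cd)(da)=(abcd)^2=1$ is exactly this argument in disguise, so that route coincides with the paper's. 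Your main route, by contrast, establishes the stronger closed-form invariant $\mu(\bw)=(-1)^{\ell}\prod_{e\in E(\bw)}\sigma(e)$ with $\sigma(e)=\tau(e,u)\tau(e,v)$, from which the lemma falls out by additivity of length and of edge multisets; your regrouping of $\prod_{i}\tau(e_{i-1},v_i)\tau(e_i,v_i)$ by edge terms (with cyclic indexing and multiplicities) is carried out correctly, and the formula also handles the trivial-walk case that the paper dispatches separately. What the global formula buys is conceptual: it shows $\mu$ of a closed walk depends only on its length parity and the multiset of edge signs, which immediately yields side facts the paper derives separately (e.g., $\mu(\bw^{-1})=\mu(\bw)$ and independence of the basepoint). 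What the paper's local approach buys is that it extends directly to open walks, as needed in the subsequent Lemma on $(u,v)$-walks, where an edge-multiset formula alone cannot determine $\mu$ because the signs at the two endpoints are not consumed; so the local bookkeeping is not redundant in the paper's overall development.
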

\begin{proof}
If $\bw$ or $\bw'$ is trivial, then it is clear.
Suppose that both are nontrivial.
Let $\bw: ue_{j_1}v_{i_2}\cdots e_{j_t} u$ and $\bw': ue'_{j_1}v'_{i_2}\cdots e'_{j_r} u$.
Since $\tau(e_{j_1},u)\tau(e_{j_t},u)=1$ (resp. $\tau(e'_{j_1},u)\tau(e'_{j_r},u)=1$) means that $u$ is an unbalanced vertex term of $\bw$ (resp. $\bw'$), $\mu(\bw+\bw')$ is equal to
\[(-\tau(e_{j_1},u)\tau(e_{j_t},u))(-\tau(e'_{j_1},u)\tau(e'_{j_r},u))\mu(\bw)\mu(\bw')
(-\tau(e_{j_t},u)\tau(e'_{j_1},u))(-\tau(e'_{j_r},u)\tau(e_{j_1},u))=\mu(\bw)\mu(\bw').\]
\end{proof}

\begin{lemma}\label{lem:twowalks:basic}
Let $\bw$ be a $(u,v)$-walk in a signed graph   $(G,\tau)$.
For any two $(v,u)$-walks $\bw_1$ and $\bw_2$,
\[\mu(\bw_1+\bw_2^{-1})=\mu(\bw+\bw_1)\mu(\bw+\bw_2).\]
\end{lemma}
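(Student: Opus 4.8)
The plan is to reduce everything to the multiplicativity result already established in Lemma~\ref{eq:sign:product:closed}. The target identity compares the parity-sign $\mu$ of a single closed walk $\bw_1+\bw_2^{-1}$ against a product of the signs of two closed walks $\bw+\bw_1$ and $\bw+\bw_2$. Since $\bw$ is a $(u,v)$-walk while $\bw_1$ and $\bw_2$ are $(v,u)$-walks, all three concatenations $\bw+\bw_1$, $\bw+\bw_2$, and $\bw_1+\bw_2^{-1}$ are legitimate closed walks, so each $\mu$-value is defined. The natural idea is to form the concatenation $(\bw+\bw_1)+(\bw+\bw_2)^{-1}$, which is a closed walk obtained by gluing two closed walks at the common vertex $u$, and then apply Lemma~\ref{eq:sign:product:closed} to it.

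First I would compute $\mu\big((\bw+\bw_1)+(\bw+\bw_2)^{-1}\big)$ in two ways. On one hand, Lemma~\ref{eq:sign:product:closed} (together with the fact that reversing a walk does not change whether a given internal vertex term is unbalanced, so $\mu(\bw^{-1})=\mu(\bw)$ for any closed walk) gives
\[
\mu\big((\bw+\bw_1)+(\bw+\bw_2)^{-1}\big)=\mu(\bw+\bw_1)\,\mu\big((\bw+\bw_2)^{-1}\big)=\mu(\bw+\bw_1)\,\mu(\bw+\bw_2).
\]
This already reproduces the right-hand side of the claimed identity. On the other hand, the walk $(\bw+\bw_1)+(\bw+\bw_2)^{-1}$ traverses $\bw$, then $\bw_1$, then $\bw_2^{-1}$, then $\bw^{-1}$, and the plan is to show its $\mu$-value equals $\mu(\bw_1+\bw_2^{-1})$, the left-hand side. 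The key point is that the two appearances of $\bw$ (once forward at the start, once reversed at the end) are adjacent cyclically in this closed walk, and their contribution to the unbalanced-vertex count cancels.

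The main obstacle — and the step I would carry out most carefully — is tracking the unbalanced vertex terms at the four junction points where the pieces are glued, since $\mu$ is a product of local $\pm1$ contributions at each vertex term and these local contributions depend on the incidence signs of the two edges meeting there. Concretely, I would expand $\mu$ as a product over all vertex terms of $(-1)$ raised to the indicator of unbalancedness, and argue that the internal unbalanced terms lying strictly inside the $\bw$ portion at the front exactly cancel, in sign, those inside the $\bw^{-1}$ portion at the back, because reversal sends the incidence pattern $\tau(e_{i-1},v_i)\tau(e_i,v_i)$ to itself. What survives is precisely the unbalanced-term count of the closed walk obtained by splicing $\bw_1$ to $\bw_2^{-1}$ at $v$ and closing up at $u$ through the (now cancelled) $\bw$-segment, i.e. $\mu(\bw_1+\bw_2^{-1})$. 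I would verify the boundary bookkeeping at $u$ and $v$ directly using the definition of unbalanced vertex terms, exactly as in the telescoping computation at the end of the proof of Lemma~\ref{eq:sign:product:closed}.

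Equating the two evaluations then yields $\mu(\bw_1+\bw_2^{-1})=\mu(\bw+\bw_1)\,\mu(\bw+\bw_2)$, completing the proof. An alternative, perhaps cleaner, route would avoid forming the four-piece walk explicitly: apply Lemma~\ref{eq:sign:product:closed} to the pair $\bw+\bw_1$ and $\bw+\bw_2$ only after observing that $\bw$ cancels, but I expect the direct concatenation argument above to be the most transparent, with the cancellation of the doubled $\bw$-segment being the crux.
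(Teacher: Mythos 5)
Your proposal is correct, but it follows a genuinely different route from the paper's. The paper argues in two cases: when $u=v$ it applies Lemma~\ref{eq:sign:product:closed} four times, and when $u\neq v$ it writes each of $\mu(\bw_1+\bw_2^{-1})$, $\mu(\bw+\bw_1)$, $\mu(\bw+\bw_2)$ explicitly as $\mu$ of the constituent open walks times local junction factors of the form $-\tau(f,x)\tau(g,x)$ at $u$ and $v$, then multiplies the latter two and lets every factor contributed by $\bw$ appear twice and square away. You instead work case-free with the single closed walk $(\bw+\bw_1)+(\bw+\bw_2)^{-1}$: one evaluation by Lemma~\ref{eq:sign:product:closed} plus reversal-invariance of $\mu$ on closed walks (valid, since reversing preserves each incidence product, including at the base vertex), and one evaluation by cancelling the doubled $\bw$-segment. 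Your second evaluation does go through, and the cleanest execution of it is to invoke cyclic-rotation invariance of $\mu$ (immediate from the cyclic definition of unbalancedness), rewrite the walk as $(\bw_1+\bw_2^{-1})+(\bw^{-1}+\bw)$, and note $\mu(\bw^{-1}+\bw)=1$: each internal unbalanced vertex of $\bw$ is counted once in each copy, and both seam vertices are unbalanced because $\tau(e,x)^2=1$, so the total count is even; one more application of Lemma~\ref{eq:sign:product:closed} then finishes. Amusingly, this is exactly the device the paper itself deploys later in the proof of Lemma~\ref{lem:moreover:part} (there $\mathbf{d}=\bw^{-1}+\bw$, and $\mathbf{a}+\mathbf{b}$ is identified with $\mathbf{c}+\mathbf{d}$ up to rotation), and there is no circularity since you rely only on Lemma~\ref{eq:sign:product:closed}. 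What the paper's expansion buys is complete explicitness with no appeal to rotation invariance; what yours buys is a uniform, conceptually tidier argument in which all boundary bookkeeping is concentrated in the single identity $\mu(\bw^{-1}+\bw)=1$. Two small imprecisions, neither fatal: the internal unbalanced terms of the two $\bw$-copies do not cancel pairwise ``in sign'' so much as occur an even number of times in total, and the doubled segment introduces two extra unbalanced seam vertices whose contributions must be absorbed by the junction factors at $v$ --- this is precisely the boundary bookkeeping you flag, and your promised direct verification from the definition does confirm it.
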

\begin{proof}
If $u=v$, then the lemma holds, since we have the following from Lemma~\ref{eq:sign:product:closed}:
\begin{eqnarray*}
&& \mu(\bw_1+\bw_2^{-1})=\mu(\bw_1)\mu(\bw_2^{-1})=\mu(\bw_1)\mu(\bw_2)= \mu(\bw)\mu(\bw_1)\mu(\bw)\mu(\bw_2)= \mu(\bw+\bw_1)\mu(\bw+\bw_2).
\end{eqnarray*}
Suppose that $u$ and $v$ are distinct. Then each of $\bw$, $\bw_1$, and $\bw_2$ is nontrivial. Without loss of generality, it is enough to consider the case when
\[\bw: ue_1v_2\cdots e_tv,\qquad\bw_1: ve'_1v'_2\cdots e'_ru,\qquad \bw_2: ve''_1v''_2\cdots e''_su\]
as depicted in Figure~\ref{fig:Lemma41}.

\begin{figure}[h!]
  \centering  \includegraphics[width=5.5cm,page=10]{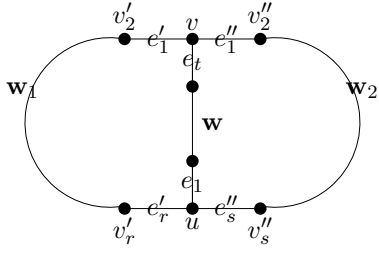}\\\caption{An illustration for the proof of Lemma~\ref{lem:twowalks:basic}}
   \label{fig:Lemma41}
   \end{figure}

Note that $\tau(e'_r,u)\tau(e''_s,u)=1$ (resp. $\tau(e'_1v)\tau(e''_1,v)=1$) means that $u$ (resp. $v$) is a new unbalanced vertex term of $\bw_1+\bw_2^{-1}$.
Thus,
\[\mu(\bw_1+\bw_2^{-1})=\mu(\bw_1)(-\tau(e'_r,u)\tau(e''_s,u))\mu(\bw_2)(-\tau(e'_1,v)\tau(e''_1,v)).\]
Likewise,
\begin{eqnarray*}
\mu(\bw+\bw_1)&=&\mu(\bw)(-\tau(e_t,v)\tau(e'_1,v))\mu(\bw_1)(-\tau(e'_r,u)\tau(e_1,u)),\\
\mu(\bw+\bw_2)&=&\mu(\bw)(-\tau(e_t,v)\tau(e''_1,v))\mu(\bw_2)(-\tau(e''_s,u)\tau(e_1,u)).
\end{eqnarray*}
Therefore,
$\mu(\bw_1+\bw_2^{-1})=\mu(\bw_1)\mu(\bw_2)\tau(e'_r,u)\tau(e''_s,u)
\tau(e'_1,v)\tau(e''_1,v)=\mu(\bw+\bw_1)\mu(\bw+\bw_2)$.
\end{proof}

\begin{lemma}\label{lem:product}
Let $\bw$ and $\bw'$ be two even-signed closed walks in a signed graph, whose first vertex terms are the same.
Then (by taking $B_{\bw}$ and $B_{\bw'}$ properly)
$B^+_{\bw+\bw'}=B^+_{\bw}B^+_{\bw'}$ and $B^-_{\bw+\bw'}=B^-_{\bw}B^-_{\bw'}$.
\end{lemma}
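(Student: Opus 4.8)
The plan is to analyze how the balanced section-decompositions of $\bw$ and $\bw'$ combine to form a balanced section-decomposition of $\bw+\bw'$, since the binomial $B_{\bw+\bw'}$ is defined entirely in terms of which balanced sections land in even versus odd position. Let $v$ be the common first vertex term of $\bw$ and $\bw'$. Since both $\bw$ and $\bw'$ are even closed walks, each has an even number of unbalanced vertex terms, so I can write their balanced section-decompositions as $\bw=\bw_0+\cdots+\bw_{2a-1}$ and $\bw'=\bw'_0+\cdots+\bw'_{2b-1}$, where (by choosing the decomposition appropriately) $v$ is the first vertex term of $\bw_0$ and of $\bw'_0$. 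The concatenation $\bw+\bw'$ then has the obvious section-decomposition $\bw_0+\cdots+\bw_{2a-1}+\bw'_0+\cdots+\bw'_{2b-1}$, and the key point to verify is whether this is exactly the balanced section-decomposition of $\bw+\bw'$ or whether a merge occurs at the junction vertex $v$ where $\bw$ ends and $\bw'$ begins.

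The main technical step, and the place I expect the real obstacle, is controlling the behavior at the single junction vertex $v$ between the last section $\bw_{2a-1}$ of $\bw$ and the first section $\bw'_0$ of $\bw'$. I would split into two cases according to whether $v$ is an unbalanced vertex term of the combined walk $\bw+\bw'$ at that junction. If $v$ \emph{is} unbalanced there, then $\bw_{2a-1}$ and $\bw'_0$ remain separate balanced sections, the total number of sections is $2a+2b$ (even), and since $\bw+\bw'$ is even (by Lemma~\ref{eq:sign:product:closed}, $\mu(\bw+\bw')=\mu(\bw)\mu(\bw')=1$), the parity labeling is consistent: sections $\bw_0,\ldots,\bw_{2a-1}$ keep their parities and $\bw'_0,\ldots,\bw'_{2b-1}$ keep theirs, giving $B^+_{\bw+\bw'}=B^+_{\bw}B^+_{\bw'}$ directly. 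If $v$ is \emph{not} unbalanced at the junction, then $\bw_{2a-1}$ and $\bw'_0$ merge into a single balanced section; here I must check that the merged section still sits in an even-indexed slot so that its edges go into $B^+$, and that $\bw_{2a-1}$ (the last of $\bw$, in odd position $2a-1$) together with $\bw'_0$ (the first of $\bw'$, in even position $0$) are consistently absorbed—this is where a careful bookkeeping of positions modulo $2$ is needed, and the evenness of both walks is exactly what keeps the parities aligned after the shift.

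Rather than treating the two cases wholly separately, a cleaner route is to define $\kappa$ on edge terms as in the proof of Observation~\ref{obervation}—assigning $\kappa(e)=(-1)^s$ when $e$ lies in the $s$th balanced section—and to show that the sign function $\kappa$ for $\bw+\bw'$ agrees, on the edges coming from $\bw$, with the $\kappa$ of $\bw$, and on the edges coming from $\bw'$ with the $\kappa$ of $\bw'$, after possibly replacing $B_{\bw}$ or $B_{\bw'}$ by its negative (the sign ambiguity noted in the definition). Since $B^+$ collects exactly the edges with $\kappa=+1$ and $B^-$ those with $\kappa=-1$, the multiplicativity $B^+_{\bw+\bw'}=B^+_{\bw}B^+_{\bw'}$ and $B^-_{\bw+\bw'}=B^-_{\bw}B^-_{\bw'}$ follows immediately once this agreement of $\kappa$-values is established. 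The phrase ``by taking $B_{\bw}$ and $B_{\bw'}$ properly'' in the statement signals precisely that one is free to fix these two global signs, which removes the only genuine ambiguity and makes the case analysis at $v$ go through.
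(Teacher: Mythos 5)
Your overall plan coincides with the paper's: concatenate the two balanced section-decompositions and split into cases according to whether a merge occurs at $v$. But there is a genuine gap: you treat only \emph{one} occurrence of $v$ as a potential section boundary, namely the internal junction where $\bw$ ends and $\bw'$ begins, whereas $\bw+\bw'$ is a \emph{closed} walk whose sections are read cyclically, so $v$ also occurs at the wrap-around junction between the last section $\bw'_{2b-1}$ of $\bw'$ and the first section $\bw_0$ of $\bw$. The crux of the paper's argument is that the two junctions merge \emph{simultaneously}: by Lemma~\ref{eq:sign:product:closed}, $\bw_{2a-1}+\bw'_0$ is a balanced section of $\bw+\bw'$ if and only if $\bw'_{2b-1}+\bw_0$ is one (otherwise the even closed walk $\bw+\bw'$ would have an odd number of balanced sections). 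Without this observation, your Case 1 claim of ``$2a+2b$ sections'' is unjustified, since a priori the wrap-around junction could still merge; and your Case 2, which you explicitly defer (``here I must check \dots careful bookkeeping \dots is needed''), cannot be completed as framed, because merging only at the internal junction would leave $2a+2b-1$ sections.

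Moreover, the bookkeeping you anticipate in the merge case points the wrong way. The merged section $\bw_{2a-1}+\bw'_0$ consists of edges from $B^-_{\bw}$ (odd slot of $\bw$) together with edges from $B^+_{\bw'}$ (even slot of $\bw'$), so no single parity assigned to it is compatible with the original choices of \emph{both} binomials; checking ``that the merged section still sits in an even-indexed slot so that its edges go into $B^+$'' cannot succeed. What actually happens in the paper is that in the merge case the balanced section-decomposition of $\bw+\bw'$ is the cyclically shifted $(\bw'_{2b-1}+\bw_0)+\bw_1+\cdots+\bw_{2a-2}+(\bw_{2a-1}+\bw'_0)+\bw'_1+\cdots+\bw'_{2b-2}$, which has the even count $2a+2b-2$, and the identities $B^{+}_{\bw+\bw'}=B^{+}_{\bw}B^{+}_{\bw'}$ and $B^{-}_{\bw+\bw'}=B^{-}_{\bw}B^{-}_{\bw'}$ hold only after replacing $B_{\bw'}$ by $-B_{\bw'}$ (swapping its two monomials); this global sign flip is exactly what ``taking $B_{\bw}$ and $B_{\bw'}$ properly'' licenses, and the simultaneous wrap-around merge is what makes the flipped signs consistent at $\bw_0$ as well. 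Your proposed $\kappa$-function reformulation is harmless as notation, but it is equivalent to, not a substitute for, this two-junction analysis.
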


\begin{proof}
Let $\bw=\bw_0+\cdots+\bw_{r}$ and $\bw'=\bw'_0+\cdots+\bw'_s$ be balanced section-decompositions of $\bw$ and $\bw'$ for some odd integers $r$ and $s$.
By Lemma~\ref{eq:sign:product:closed},  $\bw_{r}+\bw'_0$ is a balanced section of $\bw+\bw'$ if and only if $\bw'_{s} +\bw_0$ is a balanced section of $\bw+\bw'$.
First, suppose that $\bw_{r}+\bw'_0$ is not a balanced section of $\bw+\bw'$.
Then $\bw_0+\cdots+\bw_{r}+\bw'_0+\cdots+\bw'_s$ is a balanced section-decomposition of $\bw+\bw'$. By definition,
it follows that $B^+_{\bw}B^+_{\bw'}=B^+_{\bw+\bw'}$ and
$B^-_{\bw}B^-_{\bw'}=B^-_{\bw+\bw'}$.
Suppose that $\bw_{r}+\bw'_0$ is a balanced section of $\bw+\bw'$.
Letting $\mathbf{x}=\bw'_s+\bw_0$ and $\mathbf{y}=\bw_r+\bw_0'$,
we have a balanced section-decomposition of $\bw+\bw'$,
$$\mathbf{x}+\bw_1+\cdots+\bw_{r-1}+\mathbf{y}+\bw'_1+\cdots+\bw'_{s-1}.$$
Then we can obtain that
$B^+_{\bw}B^+_{\bw'}=B^+_{\bw+\bw'}$ and $B^-_{\bw}B^-_{\bw'}=B^-_{\bw+\bw'}$
by redefining $B_{\bw}$ and $B_{\bw'}$ properly.
\end{proof}

\begin{lemma}\label{lem:moreover:part}
Let $\bw$ be a $(u,v)$-walk in a signed graph without unbalanced vertex term.
For two $(v,u)$-walks $\bw_1$ and $\bw_2$,
if $\bw+\bw_i$ is even-signed for each $i=1,2$,
then $B_{\bw_1+\bw_2^{-1}}$ belongs to the ideal $\left< B_{\bw+\bw_1}, B_{\bw+\bw_2} \right>$.
\end{lemma}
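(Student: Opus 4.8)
The plan is to show that the three even closed walks $\bw+\bw_1$, $\bw+\bw_2$, and $\bw_1+\bw_2^{-1}$ are compatible in a way that lets me factor $B_{\bw_1+\bw_2^{-1}}$ through the other two binomials. First I would verify that $\bw_1+\bw_2^{-1}$ is indeed even so that $B_{\bw_1+\bw_2^{-1}}$ is defined: since $\bw$ has no unbalanced vertex term, Lemma~\ref{lem:twowalks:basic} gives $\mu(\bw_1+\bw_2^{-1})=\mu(\bw+\bw_1)\mu(\bw+\bw_2)=(+1)(+1)=+1$, using the hypothesis that both $\bw+\bw_1$ and $\bw+\bw_2$ are even. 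The key combinatorial idea is that $\bw$ appears (up to reversal) as a common section of both $\bw+\bw_1$ and $\bw+\bw_2$, and I want to argue that traversing $\bw$ and then $\bw^{-1}$ cancels, so that ``multiplying'' $B_{\bw+\bw_1}$ by $B_{\bw+\bw_2}$ and dividing out the $\bw$-contribution should recover the edges of $\bw_1+\bw_2^{-1}$.

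The main tool will be Lemma~\ref{lem:product}, which tells us that for even closed walks with a common first vertex term, the binomial of their concatenation factors as the product of the two binomials (with signs chosen properly). I would like to apply this to the concatenation $(\bw+\bw_1)+(\bw_2+\bw^{-1})$. The point is that $\bw_2+\bw^{-1}$ is a $(v,v)$-closed walk, and by Lemma~\ref{lem:twowalks:basic} (applied with roles suitably arranged, or directly by $\mu(\bw_2+\bw^{-1})=\mu(\bw+\bw_2)=+1$ since $\bw$ is balanced) it is even. So both $\bw+\bw_1$ and $\bw_2+\bw^{-1}$ are even closed walks sharing the vertex $u$; hence their concatenation $(\bw+\bw_1)+(\bw_2+\bw^{-1})$ is even, and by Lemma~\ref{lem:product} its associated binomial factors as $B_{\bw+\bw_1}\,B_{\bw_2+\bw^{-1}}$ up to a proper choice of signs. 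Since $B_{\bw_2+\bw^{-1}}=\pm B_{(\bw+\bw_2^{-1})^{-1}}=\pm B_{\bw+\bw_2}$ (reversal and the balancedness of $\bw$ only permute/cancel the balanced sections), the concatenated binomial lies in $\left<B_{\bw+\bw_1},B_{\bw+\bw_2}\right>$.

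It remains to relate that concatenated walk back to $\bw_1+\bw_2^{-1}$. The walk $(\bw+\bw_1)+(\bw_2+\bw^{-1})$ traverses $\bw$, then $\bw_1$, then $\bw_2$, then $\bw^{-1}$; since $\bw$ has no internal unbalanced vertex term, the edge terms of $\bw$ all lie in a single balanced section, and the initial $\bw$ and terminal $\bw^{-1}$ contribute the same multiset of edges. I would track how these edges are distributed between $B^+$ and $B^-$: the goal is to show the $\bw$-edges appear with matching multiplicity on both sides, so that after cancelling the common factor $\be^{\bx}$ (with $x_e$ the min of the two multiplicities as in the reduction used in Observation~\ref{obervation}), the reduced binomial is exactly $\pm B_{\bw_1+\bw_2^{-1}}$. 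Concretely, I expect $B_{(\bw+\bw_1)+(\bw_2+\bw^{-1})} = \be^{\bx}\,B_{\bw_1+\bw_2^{-1}}$ for a monomial $\be^{\bx}$ supported on $E(\bw)$, which immediately yields $B_{\bw_1+\bw_2^{-1}}\in\left<B_{\bw+\bw_1},B_{\bw+\bw_2}\right>$ after noting that the ideal is generated by binomials and this cancellation respects divisibility.

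The hard part will be the careful bookkeeping of signs and of which balanced section each edge falls into when the four pieces $\bw,\bw_1,\bw_2,\bw^{-1}$ are glued: the balanced sections of the concatenation can merge across the gluing vertices $u$ and $v$ exactly when those vertices are balanced, and whether $\bw$'s edges end up on the $+$ side or the $-$ side depends on the parity of the section index at the point of insertion. I would handle this by choosing the first vertex term and the labeling of balanced sections so that $\bw$ occupies a single section of fixed parity (say even) in both copies, forcing its edge contribution into $B^+$ both times; then the initial $\bw$ and terminal $\bw^{-1}$ give identical monomial factors that cancel, leaving precisely the edges of $\bw_1$ and $\bw_2$ arranged as in $B_{\bw_1+\bw_2^{-1}}$. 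Verifying that this sign/parity alignment is always achievable—using Lemma~\ref{eq:sign:product:closed} to control how $\mu$ multiplies across the gluings—is the crux, and once it is pinned down the membership in the ideal follows formally.
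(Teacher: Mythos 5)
Your overall route is the paper's route: you concatenate the four pieces into the long closed walk $\bw+\bw_1+\bw_2^{-1}+\bw^{-1}$, factor its binomial through Lemma~\ref{lem:product}, and cancel the monomial $X=\prod_{e\in E(\bw)}e$ contributed by the doubled copy of $\bw$ (the paper's $\mathbf{a}=\bw+\bw_1$, $\mathbf{b}=\bw_2^{-1}+\bw^{-1}$, $\mathbf{c}=\bw_1+\bw_2^{-1}$, $\mathbf{d}=\bw^{-1}+\bw$), and your evenness check for $\bw_1+\bw_2^{-1}$ via Lemma~\ref{lem:twowalks:basic} is exactly the paper's. But first fix a recurring orientation slip: since $\bw_1$ and $\bw_2$ are $(v,u)$-walks and $\bw^{-1}$ is a $(v,u)$-walk, the expression $\bw_2+\bw^{-1}$ is not a walk at all ($\bw_2$ ends at $u$ while $\bw^{-1}$ starts at $v$); the second factor must be $\bw_2^{-1}+\bw^{-1}=(\bw+\bw_2)^{-1}$, your big walk must traverse $\bw_2^{-1}$ (not $\bw_2$), and the reversal identity you want is $B_{\bw_2^{-1}+\bw^{-1}}=\pm B_{\bw+\bw_2}$.

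The genuine gap is the final step. From $B_{\mathbf{a}+\mathbf{b}}=\pm\,\be^{\bx}B_{\mathbf{c}}$ with $B_{\mathbf{a}+\mathbf{b}}\in\left<B_{\mathbf{a}},B_{\mathbf{b}}\right>$, membership of $B_{\mathbf{c}}$ does \emph{not} ``follow formally'': an ideal is not closed under dividing its elements by a monomial ($\left<B_{\mathbf{a}},B_{\mathbf{b}}\right>$ need not be saturated with respect to $X$), so this inference is where the actual work lies. The paper closes it in two moves that your sketch only gestures at, and one of them you state backwards. First, because $\bw$ has no unbalanced vertex term, $E(\bw)$ fills a single balanced section of each of $\mathbf{a}$ and $\mathbf{b}$, so $X$ divides one \emph{entire} side of each binomial; moreover the relative parity is forced, and it is forced to be \emph{opposite} --- either $X\mid B^+_{\mathbf{a}}$ and $X\mid B^-_{\mathbf{b}}$, or $X\mid B^-_{\mathbf{a}}$ and $X\mid B^+_{\mathbf{b}}$. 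Your plan to choose labels ``so that $\bw$ occupies a single section of fixed parity\ldots forcing its edge contribution into $B^+$ both times'' cannot be arranged: Lemma~\ref{lem:product} fixes the relative labels of the factors, and indeed the cancellation you rely on happens exactly because the two copies of $E(\bw)$ land on opposite sides of $B_{\mathbf{a}+\mathbf{b}}$ (equivalently, $B^+_{\mathbf{d}}=B^-_{\mathbf{d}}=X$); if both copies sat in $B^+$ there would be nothing in $B^-$ for them to cancel against. Second, with the dichotomy in hand one still needs the explicit add-and-subtract rewriting, e.g.\ in the first case $B_{\mathbf{c}}=\frac{B^+_{\mathbf{a}}}{X}B_{\mathbf{b}}+\frac{B^-_{\mathbf{b}}}{X}B_{\mathbf{a}}$, whose coefficients are genuine monomials precisely because of those divisibility facts; this identity, not a formal cancellation principle, is what yields $B_{\bw_1+\bw_2^{-1}}\in\left<B_{\bw+\bw_1},B_{\bw+\bw_2}\right>$.
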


\begin{proof}
For simplicity, let $\mathbf{a}$, $\mathbf{b}$, $\mathbf{c}$, and $\mathbf{d}$ be closed walks such that
\[\begin{array}{llll}
\mathbf{a}=\bw+\bw_1, \quad&
\mathbf{b}=\bw_2^{-1}+\bw^{-1},\quad& \mathbf{c}=\bw_1+\bw_2^{-1},\quad
&\mathbf{d}=\bw^{-1}+\bw.
\end{array}\]
Note that each of the four walks are even-signed ($\mathbf{a}$ and $\mathbf{b}$ are even-signed by the assumptions, $\mathbf{c}$ and $\mathbf{d}$ are even-signed by Lemma~\ref{lem:twowalks:basic}).
Moreover, the first vertex terms of $\mathbf{a}$ and $\mathbf{b}$ are the same as $u$, and the first vertex terms of $\mathbf{c}$ and $\mathbf{d}$ are the same as $v$.

We consider two closed walks $\mathbf{a}+\mathbf{b}$ and $\mathbf{c}+\mathbf{d}$.
Since
$\mathbf{a}+\mathbf{b}=\bw+\bw_1+(\bw_2)^{-1}+\bw^{-1}$ and $\mathbf{c}+\mathbf{d}=\bw_1+(\bw_2)^{-1}+\bw^{-1}+\bw$,  they are the same walk and so
$\pm B_{\mathbf{a}+\mathbf{b}}= B_{\mathbf{c}+\mathbf{d}}$.
By Lemma~\ref{lem:product}, by taking binomials associated with four even-signed closed walks $\mathbf{a}$, $\mathbf{b}$,
$\mathbf{c}$ and $\mathbf{d}$  properly, we have
\[B^+_{\mathbf{a}+\mathbf{b}}=B^+_{\mathbf{a}}B^+_{\mathbf{b}}, \qquad B^-_{\mathbf{a}+\mathbf{b}}=B^-_{\mathbf{a}}B^-_{\mathbf{b}}, \qquad
B^+_{\mathbf{c}+\mathbf{d}}=B^+_{\mathbf{c}}B^+_{\mathbf{d}}, \qquad B^-_{\mathbf{c}+\mathbf{d}}=B^-_{\mathbf{c}}B^-_{\mathbf{d}}.\]
Note that since $\bw$ has no unbalanced vertex term, $B^+_{\mathbf{d}}=B^-_{\mathbf{d}}$, and we let $X:=B^+_{\mathbf{d}}$. Hence,
{\small \begin{eqnarray*}
\!\!\!\!\!\!B_{\mathbf{c}}\!\!\!&=&\!\!\!B^+_{\mathbf{c}}-B^-_{\mathbf{c}} = \frac{B^+_{\mathbf{c}+\mathbf{d}}}{B^+_{\mathbf{d}}}
-\frac{B^-_{\mathbf{c}+\mathbf{d}}}{B^-_{\mathbf{d}}}
= \frac{1}{X} \left(B^+_{\mathbf{c}+\mathbf{d}}-B^-_{\mathbf{c}+\mathbf{d}}\right)=\frac{1}{X} B_{\mathbf{c}+\mathbf{d}}=\pm\frac{1}{X}B_{\mathbf{a}+\mathbf{b}}=\pm
\frac{B^+_{\mathbf{a}}B^+_{\mathbf{b}}}{X} \mp \frac{B^-_{\mathbf{a}}B^-_{\mathbf{b}}}{X}. \end{eqnarray*}}
We may assume that
$B_{\mathbf{c}}=\frac{B^+_{\mathbf{a}}B^+_{\mathbf{b}}}{X}- \frac{B^-_{\mathbf{a}}B^-_{\mathbf{b}}}{X}.$ (The other case is similar.)
Thus, $X$ divides both $B^+_{\mathbf{a}}B^+_{\mathbf{b}}$ and $B^-_{\mathbf{a}}B^-_{\mathbf{b}}$.
Moreover, since $\bw$ has no unbalanced vertex term,
$X$ divides one of $B^+_{\mathbf{a}}$ and $B^-_{\mathbf{a}}$, and one of
 $B^+_{\mathbf{b}}$ and $B^-_{\mathbf{b}}$.
Thus, $X$ divides either  $B^+_{\mathbf{a}}$ and $B^-_{\mathbf{b}}$, or
 $B^-_{\mathbf{a}}$ and $B^+_{\mathbf{b}}$.
If $X$ divides $B^+_{\mathbf{a}}$ and $B^-_{\mathbf{b}}$, then
{\small \begin{eqnarray*}
B_{\mathbf{c}}&=&
\frac{B^+_{\mathbf{a}}B^+_{\mathbf{b}}}{X}+\left(
-\frac{B^+_{\mathbf{a}}B^-_{\mathbf{b}}}{X}+\frac{B^+_{\mathbf{a}}B^-_{\mathbf{b}}}{X}
\right)-\frac{B^-_{\mathbf{a}}B^-_{\mathbf{b}}}{X}\\
&=&
\left( \frac{B^+_{\mathbf{a}}B^+_{\mathbf{b}}}{X}
-\frac{B^+_{\mathbf{a}}B^-_{\mathbf{b}}}{X} \right)+ \left( \frac{B^+_{\mathbf{a}}B^-_{\mathbf{b}}}{X}
-\frac{B^-_{\mathbf{a}}B^-_{\mathbf{b}}}{X}\right)=\frac{B^+_{\mathbf{a}}}{X} B_{\mathbf{b}} +
\frac{B^-_{\mathbf{b}}}{X} B_{\mathbf{a}}.
\end{eqnarray*}}
Similarly, if $X$ divides $B^-_{\mathbf{a}}$ and $B^+_{\mathbf{b}}$, then
{\small \begin{eqnarray*}
B_{\mathbf{c}}&=&
\frac{B^+_{\mathbf{a}}B^+_{\mathbf{b}}}{X}+\left(
-\frac{B^-_{\mathbf{a}}B^+_{\mathbf{b}}}{X}+\frac{B^-_{\mathbf{a}}B^+_{\mathbf{b}}}{X}
\right)-\frac{B^-_{\mathbf{a}}B^-_{\mathbf{b}}}{X}
=\frac{B^-_{\mathbf{a}}}{X} B_{\mathbf{b}} + \frac{B^+_{\mathbf{b}}}{X} B_{\mathbf{a}}.
\end{eqnarray*}}
In any case, $B_{\mathbf{c}}$ belongs to the ideal $\left< B_{\mathbf{a}}, B_{\mathbf{b}} \right>$, a desired conclusion.
\end{proof}

The following is from  Lemma~\ref{lem:moreover:part} by considering cases where $\bw$ is a walk of length one.

\begin{corollary}\label{rmk2:Lem4_1}
Let $\bw_1$ and $\bw_2$ be two even-signed closed walks in a signed graph, starting with $u,e,v$ for an edge $e=uv$. Then $B_{\bw'}\in \left< B_{\bw_1},B_{\bw_2}\right>$,
where $\bw_i=uev+\bw'_i$ for $i=1,2$ and $\bw'= \bw'_1+{\bw'_2}^{-1}$.
\end{corollary}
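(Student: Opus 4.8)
The plan is to obtain this directly from Lemma~\ref{lem:moreover:part} by specializing the $(u,v)$-walk appearing there to the length-one walk determined by the single edge $e=uv$. Write this walk as $\mathbf{z}: u\,e\,v$. Being of length one, $\mathbf{z}$ has no internal vertex term, and it is not closed (recall $u\ne v$ since $e=uv$ is an edge of a simple graph), so by Definition~\ref{def:odd:even:walk} it has no unbalanced vertex term whatsoever. Hence $\mathbf{z}$ is exactly a $(u,v)$-walk of the kind required by the hypothesis of Lemma~\ref{lem:moreover:part}.

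Next I would match up the remaining data. By assumption each $\bw_i$ is an even closed walk of the form $\bw_i=u\,e\,v+\bw'_i$, so $\bw'_i$ is a $(v,u)$-walk and $\mathbf{z}+\bw'_i=\bw_i$ is even for $i=1,2$. Thus $\bw'_1$ and $\bw'_2$ play precisely the role of the two $(v,u)$-walks in Lemma~\ref{lem:moreover:part}, and the evenness conditions ``$\mathbf{z}+\bw'_i$ is even'' hold for free, since $\mathbf{z}+\bw'_i$ is literally the given even closed walk $\bw_i$.

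Applying Lemma~\ref{lem:moreover:part} with $\bw:=\mathbf{z}$ and the two $(v,u)$-walks $\bw'_1,\bw'_2$ then yields
\[
B_{\bw'_1+(\bw'_2)^{-1}}\in\bigl\langle\, B_{\mathbf{z}+\bw'_1},\,B_{\mathbf{z}+\bw'_2}\,\bigr\rangle=\bigl\langle\, B_{\bw_1},\,B_{\bw_2}\,\bigr\rangle.
\]
Since $\bw'=\bw'_1+(\bw'_2)^{-1}$ by definition, this is exactly the desired conclusion $B_{\bw'}\in\langle B_{\bw_1},B_{\bw_2}\rangle$.

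I expect no real obstacle beyond bookkeeping: the whole content of the corollary is that Lemma~\ref{lem:moreover:part} remains applicable in the degenerate case of a one-edge walk $\bw$. The single point worth verifying is that such a $\bw$ has no unbalanced vertex term, and this is immediate because a walk of length one has no internal terms; everything else is a direct substitution into the statement of Lemma~\ref{lem:moreover:part}.
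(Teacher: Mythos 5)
Your proposal is correct and coincides with the paper's own derivation: the paper obtains Corollary~\ref{rmk2:Lem4_1} precisely by applying Lemma~\ref{lem:moreover:part} with $\bw$ taken to be the length-one walk $u\,e\,v$, which trivially has no unbalanced vertex term. Your only addition is spelling out the (easy) verification that a one-edge walk in a simple graph has no internal or closed-walk vertex terms, which the paper leaves implicit.
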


The following lemma may fail if we drop the assumption on oddness of sign of $\bw$ or $\bw'$.

\begin{lemma}\label{lem:twowalks:reverse}
Let $\bw$ and $\bw'$ be two odd-signed closed walks in a signed graph  $(G,\tau)$, whose first vertex terms are the same.
Then $\bw+\bw'$ and $\bw^{-1}+\bw'$ are even-signed closed walks in $(G,\tau)$ and  $B_{\bw^{-1}+\bw'}=\pm B_{\bw+\bw'}$.
\end{lemma}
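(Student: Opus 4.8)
The plan is to establish the two assertions in turn. That $\bw+\bw'$ and $\bw^{-1}+\bw'$ are even is quick. Reversing a closed walk does not change its multiset of unbalanced vertex terms: an internal term sits between the same two edges in $\bw$ and in $\bw^{-1}$, and the condition $\tau(e_1,u)\tau(e_t,u)=1$ defining unbalancedness of the first term is symmetric. Hence $\mu(\bw^{-1})=\mu(\bw)=-1$, so $\bw^{-1}$ is again odd. Since $\bw,\bw'$ (and likewise $\bw^{-1},\bw'$) share the vertex $u$, Lemma~\ref{eq:sign:product:closed} gives $\mu(\bw+\bw')=\mu(\bw)\mu(\bw')=1$ and $\mu(\bw^{-1}+\bw')=\mu(\bw^{-1})\mu(\bw')=1$, so both concatenations are even closed walks.

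For the binomial identity I would use the sign function on edge terms implicit in the proof of Observation~\ref{obervation}: for an even closed walk there is a function $\kappa$ on the edge terms, constant on each balanced section, whose value gets multiplied by $-\tau(e,v)\tau(e',v)$ when one passes a vertex term $v$ between consecutive edges $e,e'$ (so $\kappa$ flips exactly at unbalanced terms), and $B^+$ (resp. $B^-$) is the product of the edge terms with $\kappa=+1$ (resp. $\kappa=-1$). Evenness is precisely what makes $\kappa$ consistent around the walk, up to one global sign. Now $\bw+\bw'$ and $\bw^{-1}+\bw'$ carry the same multiset of edge terms — those of $\bw$ together with those of $\bw'$ — so it is enough to choose the two $\kappa$'s to agree on every edge term; the residual global sign is exactly the $\pm$ of the statement. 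Writing $\bw:ue_1v_2\cdots e_tu$ and $\bw':ue_1'v_2'\cdots e_s'u$, I would anchor both functions by $\kappa(e_1)=+1$.

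The heart of the argument, and the main obstacle, is the bookkeeping at the shared vertex $u$. Along the $\bw$-part the ratio $\kappa(e_i)/\kappa(e_1)$ is the product of the multipliers at $v_2,\dots,v_i$, and this is the same set of crossings whether $\bw$ is read forward or backward; hence the two $\kappa$'s already agree on all edge terms of $\bw$, and the same traversal-invariance propagates agreement on $e_1'$ to all of $\bw'$. So everything reduces to matching $\kappa(e_1')$. In $\bw+\bw'$ one reaches $e_1'$ from $e_t$, picking up the factor $\epsilon\cdot(-\tau(e_t,u)\tau(e_1',u))$ with $\epsilon:=\kappa(e_t)/\kappa(e_1)$; in $\bw^{-1}+\bw'$ one reaches $e_1'$ directly from $e_1$, picking up $-\tau(e_1,u)\tau(e_1',u)$. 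Cancelling the common nonzero factor $-\tau(e_1',u)$, agreement becomes $\epsilon\,\tau(e_t,u)=\tau(e_1,u)$, i.e.\ $\epsilon=\tau(e_1,u)\tau(e_t,u)$. Finally $\epsilon=(-1)^{\delta}$, where $\delta$ is the number of unbalanced internal terms of $\bw$, while $\bw$ being odd means that $\delta+c$ is odd, with $c=1$ if $u$ is unbalanced (that is $\tau(e_1,u)\tau(e_t,u)=1$) and $c=0$ otherwise; a one-line case check on the sign $\tau(e_1,u)\tau(e_t,u)$ shows in either case that $(-1)^\delta=\tau(e_1,u)\tau(e_t,u)$, exactly the required equality. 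Global consistency of each $\kappa$ around its closed walk — hence agreement across the remaining crossings and the closing junction — is guaranteed by the evenness proved above, so this matches $\kappa(e_1')$, giving $B^+_{\bw^{-1}+\bw'}=B^+_{\bw+\bw'}$ and $B^-_{\bw^{-1}+\bw'}=B^-_{\bw+\bw'}$ for this anchoring, and therefore $B_{\bw^{-1}+\bw'}=\pm B_{\bw+\bw'}$ in general.
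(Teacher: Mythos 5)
Your proof is correct and follows essentially the same route as the paper's: evenness via Lemma~\ref{eq:sign:product:closed}, and the binomial identity by showing that the $+/-$ assignment on the edge occurrences of $\bw$ is invariant under reversal, so that everything reduces to a single sign check at the junction with the first edge of $\bw'$, where your two-case verification that $(-1)^{\delta}=\tau(e_1,u)\tau(e_t,u)$ (using oddness of $\bw$) is exactly the paper's Case 1/Case 2 analysis according to whether the base vertex is unbalanced in $\bw$ (i.e., whether $\mathbf{x}$ is trivial). The only difference is presentational: you propagate the sign function $\kappa$ from the proof of Observation~\ref{obervation}, while the paper tracks balanced section-decompositions and the multisets $U^{\pm}$, $W^{\pm}$.
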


\begin{proof}
Note that  $\bw+\bw'$  and $\bw^{-1}+\bw'$ are even-signed by
Lemma~\ref{eq:sign:product:closed}, since $\mu(\bw^{-1})=\mu(\bw)=\mu(\bw')=-1$,
$\mu(\bw+\bw')=\mu(\bw)\mu(\bw')=1$, and $\mu(\bw^{-1}+\bw')=\mu(\bw^{-1})\mu(\bw')=1$.
Let $\bw_0+\cdots+\bw_{2r}$ be a balanced section-decomposition of $\bw$ for some nonnegative integer $r$.
We assume that $\bw_0$ contains the first vertex term $v$. Then $\mathbf{y}$ is a nontrivial walk and $\mathbf{x}$ may be trivial, and
$\bw+\bw'$ and $\bw^{-1}+\bw'$ have the following section-decompositions (the parts $\bw_1+\cdots+\bw_{2r}$ and $\bw_{2r}^{-1}+\cdots+\bw_1$ are dropped if $r=0$):
\[\begin{array}{ccccccccccccccc}
\bw&+&\bw'&=&\mathbf{y}&+&\bw_1&+&\cdots&+&\bw_{2r}&+&\mathbf{x}&+&\bw',
\\
\bw^{-1}&+&\bw'&=&\mathbf{x}^{-1}&+&\bw^{-1}_{2r}&+&\cdots&+&\bw^{-1}_{1}&+&\mathbf{y}^{-1}&+&\bw'.
\end{array}\]
Then we make a binomial $B_{\bw+\bw'}=B^+-B^-$ by putting the edges in $E(\mathbf{y})$ to $B^+$,  the edges in $E(\bw_1)$ to $B^-$,  the edges in $E(\bw_2)$ to $B^+$, and so on.
Let $U^+$ and $U^-$ be the (multi)set so that
\[ B^+= \prod_{e\in U^+}e \qquad \text{and}\qquad B^-=\prod_{e\in U^-}e.\]
Then  by a way to make the binomial $B_{\bw+\bw'}$,
\[ U^+\supset  E(\mathbf{y}) \cup\left(  \bigcup_{i>0:\text{ even}} E(\mathbf{w}_i) \right),\qquad
U^- \supset  \left( \bigcup_{i:\text{ odd}} E(\mathbf{w}_i) \right)
\cup E(\mathbf{x}). \]
Similarly, we also make a binomial $B_{\bw^{-1}+\bw'}=B'^+-B'^-$ by putting the edges in $E(\mathbf{x}^{-1})$ to $B'^-$, the edges in  $E(\bw_{2r}^{-1})$ to $B^+$, and so on.
Let $W^+$ and $W^-$ be the (multi)set so that
\[ B'^+= \prod_{e\in W^+}e \qquad \text{and}\qquad B'^-=\prod_{e\in W^-}e.\]
Then by a way to make the binomial $B_{\bw^{-1}+\bw'}$,
\[ W^+\supset  E(\mathbf{y}) \cup\left(  \bigcup_{i>0:\text{ even}} E(\mathbf{w}_i) \right),\qquad
W^- \supset  \left( \bigcup_{i:\text{ odd}} E(\mathbf{w}_i) \right)
\cup E(\mathbf{x}). \]
Note that it is sufficient to show that for the first edge term $e'_{j_1}$ of $\bw'$,
$e'_{j_1}\in U^+$ if and only if
$e'_{j_1}\in W^+$. In the following, let $e_{j_1}$ be the first edge term of $\mathbf{w}$ (i.e., the first edge term of $\mathbf{y}$), and $e_{j_*}$ be the last edge term of $\bw$.
We note $e_{j_1}\in E(\mathbf{y})\subset W^+$.

\smallskip
\noindent\textbf{(Case 1)}
Suppose that $\mathbf{x}$ is nontrivial.
Then $e_{j_*}$ is the last edge term of $\mathbf{x}$
and $\mathbf{x}+\mathbf{y}$ is a balanced section of $\bw$. Thus,
\begin{eqnarray}\label{eq:tau}
&&\tau(e_{j_*},v)=-\tau(e_{j_1},v).
\end{eqnarray}
We also note that $e_{j_*}\in E(\mathbf{x})\subset U^-$.
Then, where the second biconditional is from
\eqref{eq:tau},
\[e'_{j_1}\in U^+ ~~\Leftrightarrow ~ ~\tau(e_{j_*},v)=\tau(e'_{j_1},v)~~
\Leftrightarrow
~~-\tau(e_{j_1},v)=\tau(e'_{j_1},v)
~~\Leftrightarrow ~
~e'_{j_1}\in W^+.\]

\smallskip
\noindent\textbf{(Case 2)} Suppose that $\mathbf{x}$ is trivial. Then the first vertex term $v$ of $\bw$ is unbalanced, and so
\begin{eqnarray}\label{eq:tau2}&&\tau(e_{j_*},v)=\tau(e_{j_1},v).
\end{eqnarray}
We also note that $e_{j_*}\in E(\bw_{2r})\subset U^+$.
Then, where the second biconditional is from~\eqref{eq:tau2},
\[e'_{j_1}\in U^+ ~~\Leftrightarrow ~ ~\tau(e_{j_*},v)=-\tau(e'_{j_1},v)~~
\Leftrightarrow~~\tau(e_{j_1},v)=-\tau(e'_{j_1},v)~~\Leftrightarrow ~~e'_{j_1}\in W^+.\]
\end{proof}

\section{Proofs of the main results}\label{sec:proof}
\subsection{Proof of Theorem~\ref{thm:signed:primitive}}\label{sec:primitive}

\begin{proof}[Proof of Theorem~\ref{thm:signed:primitive}] The following is directly derived from Lemma~\ref{lem:product}.

\begin{claim}\label{lem:primitive:no_subwalk}
For an even-signed closed walk $\bw$ in a signed graph, if $B_{\bw}$ is primitive, then
$\bw$ has no proper nontrivial section that is an even-signed closed walk.
\end{claim}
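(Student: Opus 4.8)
The plan is to prove the contrapositive: assuming $\bw$ has a proper nontrivial section $\bw_0$ that is itself an even closed walk, I will construct a binomial $B_0=B_0^+-B_0^-$ in $I_{(G,\tau)}$ that witnesses non-primitivity of $B_\bw$, meaning $B_0^+ \mid B_\bw^+$ and $B_0^- \mid B_\bw^-$ with $B_0 \neq B_\bw$. The natural candidate is $B_0 = B_{\bw_0}$ itself, since by Observation~\ref{obervation} it lies in $I_{(G,\tau)}$ and it is built from a strict subset of the edge terms of $\bw$.

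First I would write the section-decomposition $\bw = \bw_0 + \bw_1$, where $\bw_1$ is the complementary section (so that $\bw_1$ is also a closed walk sharing the splitting vertex with $\bw_0$). Since $\bw$ is even and $\bw_0$ is even by hypothesis, Lemma~\ref{eq:sign:product:closed} forces $\mu(\bw_1) = \mu(\bw)/\mu(\bw_0) = 1$, so $\bw_1$ is even as well and $B_{\bw_1}$ is defined. The crux is then Lemma~\ref{lem:product}: by choosing the binomials properly, it gives $B_\bw^+ = B_{\bw_0}^+ B_{\bw_1}^+$ and $B_\bw^- = B_{\bw_0}^- B_{\bw_1}^-$. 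This is exactly the divisibility relation $B_{\bw_0}^+ \mid B_\bw^+$ and $B_{\bw_0}^- \mid B_\bw^-$ that the definition of primitivity forbids — provided $B_{\bw_0} \neq B_\bw$, equivalently provided $\bw_1$ contributes at least one edge, which holds because the section $\bw_0$ is proper.

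The main obstacle, and the only subtle point, is verifying that $B_{\bw_0}$ is genuinely a \emph{different} binomial from $B_\bw$ rather than coinciding with it. Properness of $\bw_0$ guarantees $E(\bw_1)$ is a nonempty multiset, so $B_{\bw_1}^+ B_{\bw_1}^-$ is a nonconstant monomial; hence at least one of $B_{\bw_1}^+, B_{\bw_1}^-$ is nonconstant, making the corresponding factor a proper divisor. I would also need to confirm that the signs in Lemma~\ref{lem:product} can be aligned so that $B_{\bw_0}^+$ divides $B_\bw^+$ (not $B_\bw^-$); since binomials are defined only up to an overall sign, I can fix the orientation of $B_{\bw_0}$ and $B_{\bw_1}$ compatibly with the chosen representative of $B_\bw$, which is precisely what the phrase ``by taking $B_\bw$ and $B_{\bw'}$ properly'' in Lemma~\ref{lem:product} provides. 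Once this sign bookkeeping is settled, the witness $B_{\bw_0}$ contradicts primitivity, completing the proof.
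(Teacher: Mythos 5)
Your proof is correct and takes essentially the same route as the paper, which derives the claim directly from Lemma~\ref{lem:product}: you decompose $\bw=\bw_0+\bw_1$, note $\bw_1$ is even by Lemma~\ref{eq:sign:product:closed}, and read off the divisibilities $B_{\bw_0}^+\mid B_{\bw}^+$ and $B_{\bw_0}^-\mid B_{\bw}^-$ contradicting primitivity. Your additional bookkeeping (the sign alignment in Lemma~\ref{lem:product} and the check that $E(\bw_1)\neq\emptyset$ forces $B_{\bw_0}\neq B_{\bw}$) simply makes explicit what the paper leaves implicit.
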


Let $\bw:v_{i_1}e_{j_1}v_{i_2}\cdots v_{i_r}e_{j_r}v_{i_1}$ be an even-signed closed walk in a signed graph $(G,\tau)$.
First, we show the `only if' part.
Suppose that $B_{\bw}$ is primitive in $I_{(G,\tau)}$. Note that (ii) holds by Claim~\ref{lem:primitive:no_subwalk}, and so we will show (i).

\begin{claim}\label{lem:primitive:no_repetition}
Let $\bw=\bw_0+\bw_1+\bw_2+\bw_3$ be a nontrivial section-decomposition of $\bw$.
Then at least one of $\bw_0+\bw_1$ and $\bw_1+\bw_2$ is not a closed walk.
\end{claim}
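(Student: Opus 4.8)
The plan is to argue by contradiction. Suppose $B_\bw$ is primitive and yet \emph{both} $\bw_0+\bw_1$ and $\bw_1+\bw_2$ are closed walks. Writing $\bw=\bw_0+\bw_1+\bw_2+\bw_3$ with each $\bw_i$ an $(a_i,a_{i+1})$-walk and $a_4=a_0$, the two closedness assumptions say exactly that $a_0=a_2=:v$ and $a_1=a_3=:u$. Hence $\bw_0,\bw_2$ are $(v,u)$-walks and $\bw_1,\bw_3$ are $(u,v)$-walks, all nontrivial since the section-decomposition is nontrivial. Moreover $\bw_0+\bw_1$ and $\bw_1+\bw_2$ are proper nontrivial \emph{closed} sections of $\bw$, so Claim~\ref{lem:primitive:no_subwalk} forces each of them to be odd, that is, $\mu(\bw_0+\bw_1)=\mu(\bw_1+\bw_2)=-1$.

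First I would build a short even closed walk out of $\bw_0$ and $\bw_2$ alone. Applying Lemma~\ref{lem:twowalks:basic} with the $(u,v)$-walk $\bw_1$ in the role of $\bw$ and the two $(v,u)$-walks $\bw_0,\bw_2$, I get
\[
\mu(\bw_0+\bw_2^{-1})=\mu(\bw_1+\bw_0)\,\mu(\bw_1+\bw_2)=\mu(\bw_0+\bw_1)\,\mu(\bw_1+\bw_2)=(-1)(-1)=1,
\]
where the middle equality uses that $\mu$ of a closed walk is invariant under cyclic rotation of its starting term. Thus $\bw_0+\bw_2^{-1}$ is an even closed walk based at $v$, and $B_{\bw_0+\bw_2^{-1}}\in I_{(G,\tau)}$ by Observation~\ref{obervation}. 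Its edge multiset is $E(\bw_0)\cup E(\bw_2)$, a \emph{proper} subset of $E(\bw)$ because $\bw_1$ and $\bw_3$ are nontrivial.

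The crux is to show that, after a suitable choice of sign, $B_{\bw_0+\bw_2^{-1}}^+$ divides $B_\bw^+$ and $B_{\bw_0+\bw_2^{-1}}^-$ divides $B_\bw^-$; since $B_{\bw_0+\bw_2^{-1}}\neq\pm B_\bw$ (it involves strictly fewer edge terms), this contradicts the primitivity of $B_\bw$ and finishes the proof. The idea is that the $\pm$ label which a balanced section-decomposition assigns to an edge term flips \emph{exactly} when the walk crosses an unbalanced vertex term, and unbalancedness of an internal vertex term is insensitive to the direction of traversal. Consequently the relative labels among the edges inside the block $\bw_0$, and likewise inside $\bw_2$, are identical in the walk $\bw$ and in $\bw_0+\bw_2^{-1}$ (where $\bw_0$ is traversed forward in both, and $\bw_2$ is only reversed). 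So it remains to check that the single ``offset'' between the $\bw_0$-block and the $\bw_2$-block agrees in the two walks: the flip factor accumulated along $\bw$ in passing from the last edge of $\bw_0$ through $\bw_1$ to the first edge of $\bw_2$ differs from the factor accumulated directly across the junction in $\bw_0+\bw_2^{-1}$ by exactly $-\mu(\bw_1+\bw_2)$ (the overall sign being immaterial, as it only toggles $B_{\bw_0+\bw_2^{-1}}$ between itself and its negative). Since $\mu(\bw_1+\bw_2)=-1$, this factor is $+1$, so the two labelings agree on every edge term of $E(\bw_0)\cup E(\bw_2)$, which is precisely the divisibility needed.

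I expect the main obstacle to be this last sign bookkeeping. The cleanest way I would organize it is to encode the label of each edge term $e_i$ as the running product of the junction signs $-\tau(e_{i-1},v_i)\tau(e_i,v_i)$ along the walk, and then to recognize the offset across the $\bw_1$-detour as the telescoped product that computes $\mu(\bw_1+\bw_2)$; the identity $\mu(\bw_1+\bw_2)=-1$ guaranteed by Claim~\ref{lem:primitive:no_subwalk} then does all the work. The symmetric statement (for $\bw_1+\bw_2$ not closed) is obtained by the same argument after a cyclic relabeling, so no separate case is needed.
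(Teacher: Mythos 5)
Your proof is correct, but it takes a genuinely different route from the paper's. The paper never forms $\bw_0+\bw_2^{-1}$: it first notes $\bw_3+\bw_0$ is odd (a proper closed section, via Claim~\ref{lem:primitive:no_subwalk}), then applies Lemma~\ref{lem:twowalks:reverse} to the two odd closed sections $\bw_0+\bw_1$ and $\bw_2+\bw_3$ to pass to the rearranged walk $\bw^\ast=\bw_1^{-1}+\bw_0^{-1}+\bw_2+\bw_3$ with $B_{\bw^\ast}=\pm B_{\bw}$ (hence still primitive), reads off that $\bw_3+\bw_1^{-1}$ is odd as a proper closed section of $\bw^\ast$, and finally uses Lemma~\ref{lem:twowalks:basic} with connecting walk $\bw_3$ to get $\mu(\bw_0+\bw_1)=1$ --- an even proper closed \emph{section} of $\bw$, so Claim~\ref{lem:primitive:no_subwalk} finishes with no monomial bookkeeping at all. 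You instead take both $\bw_0+\bw_1$ and $\bw_1+\bw_2$ odd, apply Lemma~\ref{lem:twowalks:basic} with connecting walk $\bw_1$ to make $\bw_0+\bw_2^{-1}$ even, and, since this walk is \emph{not} a section of $\bw$ (so Claim~\ref{lem:primitive:no_subwalk} is unavailable), you argue divisibility directly against the definition of primitivity. Your key sign identity checks out: with junction signs $-\tau(e_{i-1},v_i)\tau(e_i,v_i)$, the label offset accumulated in $\bw$ from the last edge of $\bw_0$ through $\bw_1$ to $\bw_2$ and the offset across the junction of $\bw_0+\bw_2^{-1}$ differ by exactly $-\mu(\bw_1+\bw_2)=+1$, and internal relative labels are insensitive to direction of traversal, so indeed $B^{+}_{\bw_0+\bw_2^{-1}}\mid B^{+}_{\bw}$ and $B^{-}_{\bw_0+\bw_2^{-1}}\mid B^{-}_{\bw}$ after a global sign choice, with strictly smaller degree since $\bw_1,\bw_3$ are nontrivial. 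What each approach buys: the paper's detour through $\bw^\ast$ keeps the whole argument inside the ``even closed section'' framework, delegating all sign bookkeeping to Lemmas~\ref{lem:product} and~\ref{lem:twowalks:reverse}, while your route avoids Lemma~\ref{lem:twowalks:reverse} entirely at the cost of re-deriving a piece of that machinery by hand. Two small points you should make explicit: the rotation invariance of $\mu$ used in $\mu(\bw_1+\bw_0)=\mu(\bw_0+\bw_1)$ follows from Definition~\ref{def:odd:even:walk} (the basepoint unbalancedness condition is the same local condition as at internal vertex terms), not from Lemma~\ref{eq:sign:product:closed}, since $\bw_0$ and $\bw_1$ need not be closed individually; and the degenerate possibility $B^{+}_{\bw_0+\bw_2^{-1}}=B^{-}_{\bw_0+\bw_2^{-1}}$ must be excluded, which is immediate because primitivity presumes the two monomials of $B_{\bw}$ are coprime, whereas your divisibilities would force this common monomial to divide both.
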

\begin{proof}Suppose to contrary that each of $\bw_0+\bw_1$ and $\bw_1+\bw_2$ is a closed walk.
First, we will show that both $\bw_3+\bw_0$ and $\bw_3+\bw_1^{-1}$ are odd-signed closed walks in $(G,\tau)$.
Let $v$ be the first vertex term of $\bw_0$ and $u$ be the first vertex term of $\bw_1$.
Then $\bw_3$ is a $(u,v)$-walk, and
each of $\bw_0$  and $\bw^{-1}_1$ is a $(v,u)$-walk.
Hence, both $\bw_3+\bw_0$ and $\bw_3+\bw_1^{-1}$ are closed walks.
Then $\bw_3+\bw_0$ is clearly a proper closed section of $\bw$, and so it is odd-signed by Claim~\ref{lem:primitive:no_subwalk}.
In a closed walk $\bw^\ast=\bw_1^{-1}+\bw_0^{-1}+\bw_2+\bw_3$,
note that $\bw_3+\bw_1^{-1}$ is a proper closed section.
By Lemma~\ref{lem:twowalks:reverse}, $\bw^\ast$ is also an even closed walk such that $B_{\bw}=B_{\bw^\ast}$, and so $B_{\bw^\ast}$ is also primitive.
It follows from Claim~\ref{lem:primitive:no_subwalk} that $\bw_3+\bw_1^{-1}$ is odd-signed in $(G,\tau)$.
Then
\[\mu(\bw_0+\bw_1)=\mu(\bw_0+(\bw^{-1}_1)^{-1})=
\mu(\bw_3+\bw_0)\mu(\bw_3+\bw_1^{-1})=1,\]
where the second equality is from Lemma~\ref{lem:twowalks:basic} and the last one is from the fact that both  $\bw_3+\bw_0$ and $\bw_3+\bw_1^{-1}$ are odd-signed.
Hence, $\bw_0+\bw_1$ is a proper even-signed closed section of $\bw$, a contradiction to  Claim~\ref{lem:primitive:no_subwalk}.
\end{proof}

\begin{claim}\label{claim:repeat:cutvertex}
Let  $v$ be a vertex repeated in $\bw$.
Then $v$ is repeated exactly twice in $\bw$ and is a cut vertex of $[\bw]$ such that $[\bw]-v$ has exactly two connected components and each block of $[\bw]$ is a cycle of length at least two.
\end{claim}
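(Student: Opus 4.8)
The plan is to prove Claim~\ref{claim:repeat:cutvertex} by leveraging the already-established Claims~\ref{lem:primitive:no_subwalk} and~\ref{lem:primitive:no_repetition}, which severely restrict how an even primitive walk can revisit a vertex. First I would suppose that some vertex $v$ appears $k \ge 2$ times among the vertex terms of $\bw$ (reading cyclically, so the repeated first/last term is counted once). Each such occurrence lets us cut $\bw$ at $v$ into a section-decomposition $\bw = \mathbf{s}_1 + \cdots + \mathbf{s}_k$ where every $\mathbf{s}_j$ is a closed walk based at $v$. I would then argue that $k \ge 3$ is impossible: grouping the pieces into exactly four consecutive closed sections $\bw_0 + \bw_1 + \bw_2 + \bw_3$ (possible when $k\ge 3$, absorbing extra pieces into $\bw_0$ or $\bw_3$) makes both $\bw_0+\bw_1$ and $\bw_1+\bw_2$ closed walks, contradicting Claim~\ref{lem:primitive:no_repetition}. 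Some care is needed so that all four sections are nontrivial; I would choose the grouping so that each $\bw_i$ is a genuine closed subwalk based at $v$, which is available precisely because $v$ repeats at least three times. This forces $k = 2$, i.e. $v$ is repeated exactly twice.

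Next, with $v$ repeated exactly twice, I would write $\bw = \mathbf{a} + \mathbf{b}$ as a nontrivial section-decomposition into two closed walks $\mathbf{a}$ and $\mathbf{b}$ both based at $v$. Each of $\mathbf{a}$ and $\mathbf{b}$ is a proper nontrivial closed section of $\bw$, so by Claim~\ref{lem:primitive:no_subwalk} neither is an even closed walk; hence both are odd. This already shows that removing $v$ disconnects $[\bw]$, since every closed walk through $v$ other than $\mathbf{a}$ or $\mathbf{b}$ would produce a third occurrence of $v$. More precisely, the edge terms of $\mathbf{a}$ and the edge terms of $\mathbf{b}$ partition $E(\bw)$, and since $v$ is the only shared vertex of $[\mathbf{a}]$ and $[\mathbf{b}]$ (any other common vertex $w\ne v$ would give $w$ a repetition forcing, via the same four-section argument, a forbidden configuration), the graph $[\bw]-v$ splits into the two nonempty pieces $[\mathbf{a}]-v$ and $[\mathbf{b}]-v$. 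Thus $v$ is a cut vertex of $[\bw]$ with exactly two components in $[\bw]-v$.

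Finally, to show each block of $[\bw]$ is a cycle of length at least two, I would argue that no vertex of $[\bw]$ can be repeated except in the cut-vertex pattern just described. Apply the same reasoning to an arbitrary repeated vertex $w$: it is repeated exactly twice and is itself a cut vertex whose removal splits $[\bw]$ into two pieces, one for each of the two closed sections into which $\bw$ decomposes at $w$. Consequently every cut vertex of $[\bw]$ corresponds to an honest section-decomposition of $\bw$ into two odd closed walks, and between consecutive cut vertices the walk traces a closed section with no internal repetition, i.e. a cycle; a multiple edge traversed back and forth gives a cycle of length two, which is why the statement allows length at least two rather than at least three. Each block therefore contains no cut vertex in its interior and is traversed by $\bw$ as a single closed loop, so it is a cycle.

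The main obstacle I expect is the careful bookkeeping in the $k \ge 3$ step: one must verify that the four-section grouping really yields \emph{nontrivial} closed walks $\bw_0,\bw_1,\bw_2,\bw_3$ with both $\bw_0+\bw_1$ and $\bw_1+\bw_2$ closed, so that Claim~\ref{lem:primitive:no_repetition} genuinely applies. This hinges on correctly tracking which sections close up at $v$ versus at other vertices, and on handling the cyclic nature of the closed walk when choosing the first vertex term. The structural conclusion about blocks then follows cleanly once the repetition analysis is in place, matching the image in Figure~\ref{fig:image}.
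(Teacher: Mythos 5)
Your reduction of the case of three or more occurrences to Claim~\ref{lem:primitive:no_repetition} has a genuine gap at $k=3$. Cutting $\bw$ at the three occurrences of $v$ gives exactly three nontrivial closed sections $\mathbf{s}_1+\mathbf{s}_2+\mathbf{s}_3$; there are no ``extra pieces'' to absorb, so to reach a four-term section-decomposition you must also cut inside some $\mathbf{s}_i$ at a vertex $a\neq v$. But then both $\bw_0+\bw_1$ and $\bw_1+\bw_2$ are closed only if the cut vertices read cyclically $v,a,v,a$, i.e., only if some vertex $a\neq v$ occurs in two \emph{different} sections. When $[\bw]$ is a bouquet of three cycles meeting pairwise only at $v$, no such $a$ exists, your grouping yields no contradiction, and yet this is precisely the configuration that must be excluded. (Your grouping does work for $k\ge 4$, where the four closed sections at $v$ are available.) The paper closes the $k=3$ case with a parity argument you never invoke: writing $\bw=\bw_0+\bw_1+\bw_2$ as three nontrivial closed walks based at $v$, Lemma~\ref{eq:sign:product:closed} gives $\mu(\bw_0)\mu(\bw_1)\mu(\bw_2)=\mu(\bw)=1$, so at least one $\bw_i$ is an even closed walk, contradicting Claim~\ref{lem:primitive:no_subwalk}. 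The evenness of $\bw$ and the multiplicativity of $\mu$ are thus essential here; Claim~\ref{lem:primitive:no_repetition} alone is too weak to rule out three closed loops meeting only at $v$.

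The remainder of your proposal is sound and parallels the paper. Your use of the four-section claim to show that a vertex $w\neq v$ common to both closed sections $\mathbf{a}$ and $\mathbf{b}$ forces the forbidden pattern (cuts $v,w,v,w$, making both $\bw_0+\bw_1=\mathbf{a}$ and $\bw_1+\bw_2$ closed) is exactly the paper's step establishing that $v$ is a cut vertex with $[\bw]-v$ having two components, and your degree count for the blocks --- each vertex has degree two in every block containing it, so each block is a cycle, possibly a doubled edge of length two --- matches the paper's conclusion. Repair the $k=3$ step with the $\mu$-parity argument and the proof goes through.
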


\begin{proof}
We may assume that the first vertex term is $v$.
First, we claim that there is no vertex repeated more than twice.
Suppose that there are $k$ and $\ell$ such that $1<k<\ell \le r$ and $v=v_{i_{k}}=v_{i_{\ell}}$.
Let $\bw=\bw_0+\bw_1+\bw_2$ where each $\bw_t$ is a nontrivial closed walk whose first vertex term  is $v$. Then
$\mu(\bw_0)\mu(\bw_1)\mu(\bw_2)=\mu(\bw)=1$ by Lemma~\ref{eq:sign:product:closed},   which implies that at least one of $\bw_0$, $\bw_1$, and  $\bw_2$ is an even-signed closed walk in $(G,\tau)$, and say $\bw_0$.
This contradicts to Claim~\ref{lem:primitive:no_subwalk}.
Hence, $v$ is repeated twice and so we let $\bw=\bw_0+\bw_1$ where $\bw_t$ is a nontrivial closed walk whose first vertex term is $v$.

By Claim~\ref{lem:primitive:no_repetition}, every internal vertex term $u$ of $\bw_0$ is not appeared in $\bw_1$ at all. Hence $v$ is a cut vertex of $[\bw]$. Moreover, since $v$ is repeated twice, $[\bw]-v$ has exactly two connected components and every block of $[\bw]$ containing $v$  contains exactly two edges incident to $v$.
Thus, each block is a cycle.
\end{proof}

By Claim~\ref{claim:repeat:cutvertex}, it is clear that
every vertex belongs to at most two blocks, which implies (i).

\smallskip

We show the `if' part. Suppose that an even-signed closed walk $\bw$ in $(G,\tau)$ satisfies (i) and (ii).

\begin{claim}\label{claim:dividing:one}
For each edge $e\in E(\bw)$, $e$ divides exactly one of $B_{\bw}^+$ and $B_{\bw}^-$.
\end{claim}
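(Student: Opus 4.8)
The plan is to recast the statement in terms of balanced sections. Recall from the definition of $B_{\bw}$ that, having fixed a balanced section-decomposition $\bw=\bw_0+\cdots+\bw_{2k-1}$, an edge $e\in E(\bw)$ divides $B_{\bw}^+$ (resp. $B_{\bw}^-$) exactly when some edge term equal to $e$ lies in an even-indexed (resp. odd-indexed) section. Since the balanced sections partition the edge terms of $\bw$, an edge occurring only once lies in a single section and hence divides exactly one of $B_{\bw}^+$ and $B_{\bw}^-$. Thus the whole content of the claim is to show that all edge terms equal to a fixed repeated edge $e$ lie in balanced sections of the \emph{same} parity; I would prove this by describing precisely how a repeated edge can occur and then comparing the parities of its two occurrences.

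First I would use (i) to control repetitions. Since $\bw$ is an Eulerian circuit of the multigraph $[\bw]$, two edge terms equal to the same edge $e=ab$ give two parallel edges between $a$ and $b$ in $[\bw]$. Two parallel edges cannot lie in a common cycle of length at least three, and two distinct blocks meet in at most one vertex, so all edge terms equal to $e$ lie in one block, which must therefore be a $2$-cycle (a digon) on $\{a,b\}$; in particular at most two parallel copies are compatible with (i), so $e$ occurs exactly twice. Denote the two occurrences by $e_p$ and $e_q$.

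Next I would use (ii) to fix the local picture and compute. If one endpoint, say $b$, has degree $2$ in $[\bw]$, then $\bw$ traverses $a,e,b,e,a$ consecutively; this digon is a closed walk whose two vertex terms $a,b$ are both unbalanced (as $\tau(e,a)^2=\tau(e,b)^2=1$), hence it is even, and it is a proper closed section, so $\bw=(aeba)+(\text{rest})$ is a section-decomposition into two closed walks one of which is even, contradicting (ii). (The degenerate case where $\bw$ is itself the digon has $B_{\bw}^+=B_{\bw}^-$ and is excluded by requiring $B_{\bw}\neq 0$.) Hence both $a$ and $b$ are cut vertices and I may write $\bw=\bw_a+aeb+\bw_b+bea$, where $\bw_a$ and $\bw_b$ are the closed subwalks on the two sides of the digon. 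Applying (ii) to the decomposition $\bw=\bw_b+(bea+\bw_a+aeb)$ shows $\bw_b$ is odd. Writing $\kappa$ for the section-parity function (flipping at each unbalanced vertex term, as in the proof of Observation~\ref{obervation}), the two occurrences satisfy $\kappa(e_q)=\kappa(e_p)\,(-1)^{N}$, where $N$ counts the unbalanced vertex terms strictly between $e_p$ and $e_q$, namely the two copies of $b$ bracketing $\bw_b$ together with the internal unbalanced terms of $\bw_b$. Using the identity $(-1)^{[\tau(f,v)\tau(g,v)=1]}=-\tau(f,v)\tau(g,v)$ at each junction, together with $\tau(e,b)^2=1$, this collapses to $(-1)^{N}=-\mu(\bw_b)=1$, so $\kappa(e_p)=\kappa(e_q)$ and $e$ divides exactly one of $B_{\bw}^+$ and $B_{\bw}^-$.

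I expect the sign bookkeeping in the final computation to be the main obstacle: the parity function $\kappa$ is defined globally along $\bw$ and is not a function of the underlying vertex (the endpoints $a,b$ are revisited), so a naive ``potential on vertices'' argument would wrongly predict opposite parities; the revisiting of $b$ is exactly what forces the two occurrences to agree, and this must be tracked carefully through $\bw_b$ via its oddness. The secondary points needing care are the structural reduction that repeated edges are digons (using (i)) and the exclusion of the degree-$2$ endpoint configuration (using (ii)).
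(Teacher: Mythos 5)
Your proof is correct and rests on the same mechanism as the paper's: condition (i) confines a repeated edge $e$ to a digon block (hence exactly two occurrences), and condition (ii), applied to the closed walk lying between those two occurrences, controls the parity of the intervening unbalanced vertex terms. The only difference is packaging --- the paper argues by contradiction, showing that opposite section-parities would make the section of $\bw$ whose first and last edge terms are $e$ an even closed proper section violating (ii), whereas you run the equivalent sign computation directly from the oddness of $\bw_b$ (and you explicitly dispatch the consecutive-occurrence case, which the paper's formulation absorbs automatically).
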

\begin{proof}
Suppose to contrary that an edge $e\in E(\bw)$ divides both $B_{\bw}^+$ and $B_{\bw}^-$.
Then $e$ is repeated in $\bw$ and so $e$ is on a cycle of length two in $[\bw]$ by (i).
Moreover,  $\bw$ has at least two balanced sections, and let $\bw_0+\cdots+\bw_{2k-1}$ be a balanced section-decomposition of $\bw$.
Then we may assume that both $\bw_0$ and $\bw_{2i-1}$ contain $e$ for some $i\in [k]$.
Then there are section-decompositions $\bw_0=\mathbf{x}_0+\mathbf{y}_0$ and $\bw_{2i-1}=\mathbf{x}_{2i-1}+\mathbf{y}_{2i-1}$ such that
the first edge term of $\mathbf{y}_0$ and the last edge term of $\mathbf{x}_{2i-1}$ are $e$.
Consider the section $\bw'$ of $\bw$ so that the first and the last edge terms are $e$.
Then by the structure of $[\bw]$ from (i) and (ii), $\bw'$ is a closed walk
 and
$\bw'=\mathbf{y}_0+\mathbf{w}_1+\cdots+\mathbf{w}_{2i-2}+\mathbf{x}_{2i-1}$ is a balanced section-decomposition ($\mathbf{w}_1+\cdots+\mathbf{w}_{2i-2}$ is dropped if $i=1$). Hence, $\bw'$ is an even-signed closed section of $\bw$, a contradiction to (ii).
\end{proof}
Let $A=A(G,\tau)$.
By Claim~\ref{claim:dividing:one}, from the same way in (the proof of) Observation~\ref{obervation}, we can find an integer vector $\mathbb{b}=(b_e)_{e\in E(G)}$ such that $\mathbb{e}^{\mathbb{b}^+}-\mathbb{e}^{\mathbb{b}^-}=B_{\bw}$ and $G_{\mathbb{b}}=[\bw]$ ($G_{\mathbb{b}}$ is the multigraph in Proposition~\ref{rmk:basic:binomial}).
Suppose to contrary that $B_{\bw}$ is not primitive.
Then there is a binomial $\mathbb{e}^{\mathbb{c}^+}- \mathbb{e}^{\mathbb{c}^-}$ in $I_{(G,\tau)}$ (for some $\mathbb{c}=(c_e)_{e\in E(G)}$, other than $\mathbb{b}$) such that $\mathbb{e}^{\mathbb{c}^+}| \mathbb{e}^{\mathbb{b}^+}$, $\mathbb{e}^{\mathbb{c}^-}| \mathbb{e}^{\mathbb{b}^+}$.
It also holds $A\mathbb{c}=\mathbb{0}$. By proposition~\ref{rmk:basic:binomial},
each connected component of $G_{\mathbb{c}}$ has an even-signed Eulerian.
Now consider two multigraphs $G_{\mathbb{b}}$ and $G_{\mathbb{c}}$. Note that $G_{\mathbb{c}}$ is a proper subgraph of $G_{\mathbb{b}}$.
By the condition (i) on $G_{\mathbb{b}}$, each block $B$ of $G_{\mathbb{b}}$ is a cycle and so each block of $G_{\mathbb{c}}$ is also a block of $G_{\mathbb{b}}$.
Thus, $G_{\mathbb{c}}$ is made by taking some blocks of $G_{\mathbb{b}}$.

Let $\mathbb{d}=(d_e)_{e\in E(G)}$ be a vector such that $d_e=b_e-c_e$ for every edge $e$.
By definition,
\[  \mathbb{e}^{\mathbb{d}^+}=\frac{\mathbb{e}^{\mathbb{b}^+}}{\mathbb{e}^{\mathbb{c}^+}}\qquad \text{and}\qquad
 \mathbb{e}^{\mathbb{d}^-}=\frac{\mathbb{e}^{\mathbb{b}^-}}{\mathbb{e}^{\mathbb{c}^-}},\]
and $G_{\mathbb{d}}$ is the graph obtained from $G_{\mathbb{b}}$ by deleting the edges of blocks of $G_{\mathbb{c}}$. Take a nontrivial connected component $D$ of $G_{\mathbb{d}}$.
Since $A{\mathbb{d}}=\mathbb{0}$, by Proposition~\ref{rmk:basic:binomial}, $D$ has an even-signed Eulerian  $\mathbf{w}_D$.
However, $\mathbf{w}_D$ is a nontrivial section of $\bw$, which is an even-signed closed walk in $(G,\tau)$. This is a contradiction to (ii).
\end{proof}

\subsection{Proof of Theorem~\ref{thm:characterization:CI}}\label{sec:ci}

We often use the fact that a graph in $\mathcal{G}^{cis}$ satisfies all statements in Theorem~\ref{graph:useful:facts}, since $\mathcal{G}^{cis}\subset \mathcal{G}^{ci}$.

\begin{proof}[Proof of Theorem~\ref{thm:characterization:CI}]
We show the `only if' part first.
Suppose to contrary that $G$ is a $2$-connected graph in $\mathcal{G}^{cis}$, none of (G1)-(G5) in Theorem~\ref{thm:characterization:CI}.
Since $\mathcal{G}^{cis} \subset \mathcal{G}^{cio}$, by Theorem~\ref{thm:CI:digraph}(i) and Observation~\ref{lem:K4} it follows that $G$ is constructed by clique sums of cycles.
Note that since $G$ is 2-connected, 1-clique sum cannot be done to make $G$.
Thus, $G$ is constructed by $2$-clique sums of cycles.
By Corollary~\ref{cor:sumevencycle}, every induced subgraph which is constructed by $2$-clique sums of cycles belongs to $\mathcal{G}^{cis}$.
Not to be (G1) or (G2), $G$ is constructed by clique sums of at least three cycles.

\begin{claim}\label{claim:edge_two_cycle}
For each edge $e$, there are at most two induced cycles containing $e$.
\end{claim}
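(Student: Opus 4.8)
The plan is to argue by contradiction: assuming some edge $e=uv$ lies in three distinct induced cycles $C_1,C_2,C_3$, I would extract from their union a subdivision of $K_{2,3}$ (with the two hubs additionally joined by $e$), and then contradict the hypothesis $G\in\mathcal{G}^{cis}$ using the stability results already available. Concretely, let $K^{+}$ denote $K_{2,3}$ together with one extra edge joining its two degree-three vertices. Since $K^{+}$ contains $K_{2,3}$ as a subgraph and $\mathcal{G}^{cis}\subseteq\mathcal{G}^{ci}$, Theorem~\ref{graph:useful:facts}(ii) gives $K^{+}\notin\mathcal{G}^{cis}$; hence, by Proposition~\ref{lem:that_lemma}(i), \emph{every} subdivision of $K^{+}$ lies outside $\mathcal{G}^{cis}$. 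On the other hand, every induced subgraph of $G$ stays in $\mathcal{G}^{cis}$ by Proposition~\ref{prop:induced:CIS}. So it suffices to produce an induced subgraph of $G$ that is a subdivision of $K^{+}$.

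For the extraction, write $P_i=C_i-e$ for $i=1,2,3$; each $P_i$ is an induced $u$--$v$ path of length at least two, and the three paths are pairwise distinct. If the $P_i$ were pairwise internally disjoint and had no chords between them, then $P_1\cup P_2\cup P_3$ would be a $K_{2,3}$-subdivision with hubs $u,v$ (both hubs of degree three), and adding the edge $e$ would make the induced subgraph $G[V(P_1)\cup V(P_2)\cup V(P_3)]$ exactly a subdivision of $K^{+}$, completing the contradiction. The first steps are therefore routine: an induced cycle through $e$ has length at least three, so each $P_i$ has length at least two, and distinctness of the three cycles forces distinctness of the three paths.

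The main obstacle is precisely the internal disjointness and chordlessness of the three paths: a priori the $P_i$ may share internal vertices, and $G$ may contain chords joining different paths. To handle this I would pass to a \emph{minimal} counterexample, choosing $e$ together with three induced cycles through it so as to minimize the total number of vertices involved. In such a minimal configuration I expect that any shared internal vertex, or any inter-path chord, can be used to reroute and produce a strictly smaller configuration of three induced cycles through some edge (contradicting minimality), or else to exhibit a subdivision of $K_4$ directly; the latter case is dispatched in the same manner as above, now invoking $K_4\notin\mathcal{G}^{cis}$ (Observation~\ref{lem:K4}) in place of $K^{+}\notin\mathcal{G}^{cis}$. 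Making this rerouting analysis precise --- in particular ruling out the degenerate overlaps while keeping all three cycles induced --- is the technical heart of the claim; once internal disjointness and chordlessness are secured, the contradiction is immediate from Propositions~\ref{prop:induced:CIS} and~\ref{lem:that_lemma}(i) together with Theorem~\ref{graph:useful:facts}(ii).
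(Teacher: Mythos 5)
Your endgame coincides with the paper's (contract ears via Proposition~\ref{lem:that_lemma}(i), pass to induced subgraphs via Proposition~\ref{prop:induced:CIS}, and contradict the $K_{2,3}$-freeness in Theorem~\ref{graph:useful:facts}(ii)), but your proof has a genuine gap exactly where you flag it: the internal disjointness and chordlessness of the three paths $P_i=C_i-e$. You reduce everything to a minimal-counterexample rerouting analysis (``I expect that any shared internal vertex, or any inter-path chord, can be used to reroute \dots or else to exhibit a subdivision of $K_4$'') and never carry it out. That dichotomy is not routine in a general graph: three induced cycles through a common edge can genuinely share internal vertices and carry cross-chords (e.g.\ two triangles $uvw_1$, $uvw_2$ and an induced $4$-cycle $uw_3w_4v$ with the extra edge $w_1w_3$ is such a configuration), and verifying that minimality always yields either a smaller configuration of three induced cycles or a clean induced subdivision of $K_4$ or of $K_{2,3}$-plus-hub-edge is real case analysis that your proposal leaves entirely open. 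As written, the ``technical heart'' is an unproven assertion, so the argument is incomplete.

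The missing idea is that no such analysis is needed, because the claim does not stand in isolation: it occurs inside the proof of Theorem~\ref{thm:characterization:CI}, where it has \emph{already} been established (from $\mathcal{G}^{cis}\subseteq\mathcal{G}^{cio}$, Theorem~\ref{thm:CI:digraph}, and Observation~\ref{lem:K4}, using $2$-connectedness) that $G$ is constructed by $2$-clique sums of cycles. The paper exploits this standing hypothesis directly: in $H=G[V(C^{(1)})\cup V(C^{(2)})\cup V(C^{(3)})]$ every vertex other than the two endpoints of $e$ has degree two, so the three induced cycles pairwise meet exactly in $e$ --- your disjointness and chordlessness are automatic, not something to be extracted. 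Contracting the ears of $H$ (Proposition~\ref{lem:that_lemma}(i)) then produces $H^*\in\mathcal{G}^{cis}$ which is the $2$-clique sum of three triangles along one edge; this $H^*$ contains $K_{2,3}$ as a subgraph (note Theorem~\ref{graph:useful:facts}(ii) forbids $K_{2,3}$ as a \emph{subgraph}, which is why contracting down to triangles suffices and no induced-subdivision bookkeeping is required), a contradiction. If you reinstate the ambient hypothesis on $G$, your argument collapses to the paper's; without it, your central step remains unproved.
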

\begin{proof}[Proof of Claim~\ref{claim:edge_two_cycle}]
Suppose that there are three induced cycles $C^{(1)}$, $C^{(2)}$, and $C^{(3)}$ of $G$, containing the edge $e$.
Let $H=G[V(C^{(1)})\cup V(C^{(2)})\cup V(C^{(3)})]$.
Since $H$ is constructed by 2-clique sums of cycles,
$H\in \mathcal{G}^{cis}$ by Corollary~\ref{cor:sumevencycle}. Moreover, all vertices of $H$ except the endpoints of $e$ have degree two in the graph $H$.
By Proposition~\ref{lem:that_lemma}(i), by contracting ears of $H$, we obtain a graph $H^*\in \mathcal{G}^{cis}$, which is a $2$-clique sum of three triangles at one edge. But $H^*$ contains $K_{2,3}$, a contradiction to  Theorem~\ref{graph:useful:facts}(ii).
\end{proof}

\begin{claim}\label{claim:cycle_share_two}
An induced cycle of $G$ shares an edge with at most two induced cycles.
\end{claim}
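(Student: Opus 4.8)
The plan is to argue by contradiction, following the template of Claim~\ref{claim:edge_two_cycle}. Suppose an induced cycle $C$ shares an edge with three distinct induced cycles $C_1,C_2,C_3$. By Claim~\ref{claim:edge_two_cycle} every edge of $C$ lies in at most one induced cycle other than $C$, so I may choose three \emph{distinct} edges $e_1,e_2,e_3\in E(C)$ with $e_i\in E(C)\cap E(C_i)$. Passing to $H=G[V(C)\cup V(C_1)\cup V(C_2)\cup V(C_3)]$, Proposition~\ref{prop:induced:CIS} gives $H\in\mathcal{G}^{cis}$, and inside $H$ every vertex off the endpoints of the $e_i$ has degree two. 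Contracting ears via Proposition~\ref{lem:that_lemma} (shortening each outer path of $C_i$ to length two exactly as in Claim~\ref{claim:edge_two_cycle}, and collapsing the arcs of $C$ between consecutive $e_i$) I reach a graph $H^\ast\in\mathcal{G}^{cis}$ that is a cycle $C^\ast$ carrying a triangle $T_i$ with apex $w_i$ on each of three of its edges; in the tightest case $C^\ast$ is itself a triangle and $H^\ast$ is the graph $F$ obtained by gluing three triangles onto the three edges of a central triangle.

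Since $\mathcal{G}^{cis}\subseteq\mathcal{G}^{ci}$, it suffices to reach a contradiction by showing $H^\ast\notin\mathcal{G}^{ci}$, so signs may be dropped and I may run Algorithm~\ref{thm:CI_Graphs}. The three apexes $w_i$ are the degree-two vertices processed first, and they contribute three binomials $B_{C_1},B_{C_2},B_{C_3}$, one per triangle, each the binomial of a four-cycle through an apex and hence mixing the two legs at $w_i$ with edges of $C^\ast$. The obstruction is the even closed walk $\bw^\ast$ that runs around the outside of $C^\ast$ detouring through all three apexes: by Theorem~\ref{thm:graph:primitive} it is primitive, and its positive monomial is a product of exactly one leg from each $T_i$. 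No product $e\cdot B_{C_i}$ can create such a monomial, because each monomial of $B_{C_i}$ already carries an edge of $C^\ast$ or only legs of the single triangle $T_i$; hence $B_{\bw^\ast}\in I_{H^\ast}\setminus\langle B_{C_1},B_{C_2},B_{C_3}\rangle$. Concretely, for $F$ label the central edges $a,b,c$ and the legs $p,q$ at $w_1$, $r,s$ at $w_2$, and $t,u$ at $w_3$; then $B_{C_1}=pb-qc$, $B_{C_2}=rc-sa$, $B_{C_3}=ub-ta$, while $B_{\bw^\ast}=qsu-rtp$, and a direct check (the coefficient of $qsu$ cannot be produced by any degree-one multiple of the three generators) shows $qsu-rtp\notin\langle pb-qc,\;rc-sa,\;ub-ta\rangle$. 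Thus Algorithm~\ref{thm:CI_Graphs} returns \textsc{False}, giving $H^\ast\notin\mathcal{G}^{ci}$, the desired contradiction.

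The main obstacle is carrying out the non-generation step uniformly rather than only for $F$: the length of $C^\ast$ and whether $e_1,e_2,e_3$ are mutually adjacent control both the length (hence parity) of $\bw^\ast$ and which edges of $C^\ast$ enter the $B_{C_i}$, so the bookkeeping behind ``$qsu-rtp$ is not generated'' must be done for each arrangement. I expect the cleanest route is either to reduce every arrangement to a short explicit list of minimal graphs by further ear-contractions (Proposition~\ref{lem:that_lemma}) and verify non-membership for each, or to phrase the ``one leg from each triangle'' monomial obstruction abstractly so that it applies to all $C^\ast$ at once. It is worth stressing that the listed necessary conditions of Theorem~\ref{graph:useful:facts} are genuinely insufficient here: $F$ is $K_{2,3}$-free (indeed contains no $K_{2,3}$-subdivision), is $K_4$-free, has a single non-bipartite block, and meets the inequality in Theorem~\ref{graph:useful:facts}(i) with equality, so the contradiction has to come from the generator count itself.
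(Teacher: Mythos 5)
There is a genuine gap, and it sits exactly where you flagged it yourself. Your treatment of the tight case is correct and is in fact the paper's own computation: your $F$ is the graph $G_0$ of Figure~\ref{fig:EXAMPLE2}, your three $4$-cycle binomials and the hexagon binomial are the $B_{\bw_1},\ldots,B_{\bw_4}$ of Example~\ref{ex}, and your non-generation check is sound (every monomial of $pb-qc$, $rc-sa$, $ub-ta$ is divisible by a central edge, so the substitution $a=b=c=0$ annihilates the ideal $\left<pb-qc,\,rc-sa,\,ub-ta\right>$ but not $qsu-rtp$). What is missing is everything else. After contracting ears, the central cycle $C^\ast$ has length $3+k$, where $k\in\{0,1,2,3\}$ is the number of non-adjacent pairs among $e_1,e_2,e_3$ on $C$, and you prove non-membership only for $k=0$. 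Worse, the uniform fix you hope for cannot exist: the outer closed walk through the three apexes has length $(3+k)-3+6=6+k$, so for $k=1$ and $k=3$ it is odd and carries no binomial at all. In those arrangements the ``one leg from each triangle'' obstruction is simply absent, and no amount of bookkeeping will produce it.

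The paper closes the cases $k\ge 1$ with precisely the tool you set aside, parts (iii) and (iv) of Theorem~\ref{graph:useful:facts}, and this is why it never needs a second algorithm run. If $k\in\{1,2\}$, some two shared edges are still adjacent at a vertex $v$, so the corresponding triangles share exactly the vertex $v$; an edge joining them while avoiding $v$ would have to be a chord of $C^\ast$ (the apexes have degree two), and no chord exists once $C^\ast$ has length at least four, violating (iii). If $k=3$, the three triangles are pairwise disjoint and each pair is joined by exactly one edge of $C^\ast$ (the single contracted arc between them; the far side only reaches the third triangle), violating (iv). Together these force $H^\ast=G_0$, and only for this one graph does the paper invoke the explicit generator count of Example~\ref{ex}. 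So your closing remark is half right: conditions (i)--(v) of Theorem~\ref{graph:useful:facts} are indeed insufficient for $G_0$ itself --- note that in $G_0$ the central triangle's third edge supplies exactly the edge that (iii) demands, which is why $G_0$ survives --- but they do eliminate every other arrangement, which is how the paper dissolves the case analysis you identified as the main obstacle.
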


\begin{proof} For an induced cycle $C$ of $G$, suppose that there are three induced cycles $C^{(1)}$, $C^{(2)}$, $C^{(3)}$ of $G$, each of which shares an edge with $C$.
Then for each $i\in[3]$ there is a unique edge $e_i$ which belongs to both  $C$ and $C^{(i)}$.
By Claim~\ref{claim:edge_two_cycle}, $e_1$, $e_2$, $e_3$ are distinct.
Now let $H=G[V(C)\cup V(C^{(1)})\cup V(C^{(2)})\cup V(C^{(3)})]$, and then $H\in\mathcal{G}^{cis}$ by Corollary~\ref{cor:sumevencycle}.
In addition, all vertices of $H$, except the endpoints of $e_i$'s, have degree two  in the graph $H$.
By Proposition~\ref{lem:that_lemma}(i), by contracting ears of $H$, we obtain a graph $H^*\in \mathcal{G}^{cis}$. Note that  $H^*$ is $2$-connected with three triangles. By Theorem~\ref{graph:useful:facts}(iii) and (iv), it follows that $H^*$ must be the graph $G_0$ in Figure~\ref{fig:EXAMPLE2}. By Example~\ref{ex}, $G_0 \not\in \mathcal{G}^{ci}$ and so $G_0\not\in \mathcal{G}^{cis}$, a contradiction.
\end{proof}

By Claims~\ref{claim:edge_two_cycle}~and~\ref{claim:cycle_share_two},
there are induced cycles $C^{(1)}$, $C^{(2)}$, $\ldots$, $C^{(m)}$ (for some $m\ge 3$) such that $G$ is constructed by $2$-clique sums of those $m$ cycles, where for each $i\in [m-1]$, $C^{(i)}$ and $C^{({i+1})}$ share an edge $e_i$. Note that $e_1$, $\ldots$, $e_{m-1}$ are distinct.
For each $i\in [m-2]$ and $\ell \in\{2,\ldots,m-i+1\}$, let $H^{(i)}_\ell=G[V(C^{(i)})\cup V(C^{(i+1)})\cup \cdots \cup V(C^{(i+\ell-1)})]$.
Note that $H_\ell^{(i)}\in \mathcal{G}^{cis}$ by Corollary~\ref{cor:sumevencycle}.  In addition,  Proposition~\ref{lem:that_lemma}(i) says that by contracting two ears of $H_\ell^{(i)}$ lying on the cycles $C^{(i)}$ and $C^{(i+\ell-1)}$, we obtain a graph $F^{(i)}_\ell$ in $\mathcal{G}^{cis}$.
In $F^{(i)}_\ell$, the cycles corresponding to $C^{(i)}$ and $C^{(i+\ell-1)}$ are triangles.

\begin{claim}\label{claim:vertex_cycle}
Let $i\in[m-2]$.
Then $C^{(i+1)}$ has length at most four.
Moreover, if $C^{({i})}$, $C^{(i+1)}$, and $C^{(i+2)}$ share a vertex $v$, then
$C^{(i+1)}$ is a triangle and there is no more induced cycle containing $v$.
\end{claim}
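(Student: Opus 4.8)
The plan is to read the length of $C^{(i+1)}$ directly off the auxiliary graph $F^{(i)}_3\in\mathcal{G}^{cis}$ constructed above, which consists of the cycle $C^{(i+1)}$ with a triangle glued onto each of the two edges $e_i$ and $e_{i+1}$ (the end cycles $C^{(i)}$ and $C^{(i+2)}$ having been contracted to triangles, while $C^{(i+1)}$ is left untouched, so its length is that in $G$). Since $\mathcal{G}^{cis}\subset\mathcal{G}^{ci}$ and $F^{(i)}_3$ is $2$-connected, all parts of Theorem~\ref{graph:useful:facts} are available for it. As $e_i,e_{i+1}$ are two distinct edges of the cycle $C^{(i+1)}$, they either are disjoint or meet in a single vertex, and this dichotomy is exactly whether $C^{(i)},C^{(i+1)},C^{(i+2)}$ avoid or share a common vertex; I would treat the two cases separately.

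In the first case, $e_i\cap e_{i+1}=\emptyset$, the glued triangles $T_i$ (on $e_i$) and $T_{i+1}$ (on $e_{i+1}$) are vertex-disjoint odd cycles of $F^{(i)}_3$, so I would invoke Theorem~\ref{graph:useful:facts}(iv) to obtain two disjoint edges joining them. The key observation is that every edge of $F^{(i)}_3$ with one end in $V(T_i)$ and the other in $V(T_{i+1})$ must lie on $C^{(i+1)}$ and directly join an endpoint of $e_i$ to an endpoint of $e_{i+1}$; that is, it is an arc of length one of $C^{(i+1)}$ between $e_i$ and $e_{i+1}$. Since $C^{(i+1)}$ has exactly two such arcs, two disjoint joining edges force both arcs to have length one, whence $|C^{(i+1)}|=4$; conversely an arc of length $\ge 2$ would leave at most one joining edge, contradicting (iv). This disposes of the length bound when the three cycles share no vertex.

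In the second case, $e_i\cap e_{i+1}=\{v\}$, the three cycles share $v$ and $T_i,T_{i+1}$ meet in exactly $v$, so I would instead apply Theorem~\ref{graph:useful:facts}(iii) to get an edge joining $T_i$ and $T_{i+1}$ not incident to $v$. The only candidate is the arc of $C^{(i+1)}$ joining the two endpoints of $e_i,e_{i+1}$ other than $v$, which must therefore be a single edge; hence $C^{(i+1)}$ is a triangle. This completes the bound $|C^{(i+1)}|\le 4$ in all cases and gives the first half of the ``moreover'' statement.

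For the last assertion I would assume a further induced cycle $C'$ through $v$ and aim to produce a forbidden subgraph. Writing $v_a,v_b$ for the two neighbours of $v$ on the triangle $C^{(i+1)}$, Claim~\ref{claim:edge_two_cycle} shows $vv_a$ and $vv_b$ each already lie in two induced cycles, so $C'$ uses neither and leaves $v$ along two other edges. Passing to the induced subgraph on $V(C^{(i)})\cup V(C^{(i+1)})\cup V(C^{(i+2)})\cup V(C')$ (still in $\mathcal{G}^{cis}$ by Proposition~\ref{prop:induced:CIS}) and contracting ears via Proposition~\ref{lem:that_lemma}(i), the four cycles through $v$ become triangles about the hub $v$; in the generic case their outer vertices close into a $4$-cycle, producing a wheel that contains $K_{2,3}$ and contradicting Theorem~\ref{graph:useful:facts}(ii). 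I expect this final step to be the main obstacle: one must organise the possibilities for the two edges $C'$ uses at $v$, ruling out the configurations where $C'$ re-attaches to an end cycle of the chain by invoking Claim~\ref{claim:cycle_share_two} (an induced cycle meeting three others), and otherwise exhibiting $K_{2,3}$ or the graph $G_0$ of Example~\ref{ex}. By contrast, the length bound is an immediate consequence of parts (iii) and (iv), once one notes that the joining edges between the end triangles are precisely the length-one arcs of $C^{(i+1)}$.
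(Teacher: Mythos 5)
Your treatment of the length bound and of the triangle statement is sound and is essentially the paper's argument: the paper also deduces these from parts (iii) and (iv) of Theorem~\ref{graph:useful:facts}, merely routing through the contracted exemplars $G_1,G_2$ of Figure~\ref{fig:AB} rather than reading the joining edges off $F^{(i)}_3$ directly, and your observation that the only edges of $F^{(i)}_3$ joining the two end triangles are the length-one arcs of $C^{(i+1)}$ is correct. The genuine gap is in the last assertion, and it is exactly where you anticipated trouble, but your proposed tools do not close it. The four cycles through $v$ do \emph{not} close into a wheel: in the $2$-clique-sum chain the further induced cycle through $v$ can be taken (as the paper does, using $2$-connectedness) to be $C^{(i+3)}$, sharing the edge $e_{i+2}$ with $C^{(i+2)}$, while $C^{(i)}$ and $C^{(i+3)}$ meet the configuration only at $v$. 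Applying the triangle statement to the triple $(C^{(i+1)},C^{(i+2)},C^{(i+3)})$ — a step you would also need, since ear contraction alone does not make $C^{(i+2)}$ a triangle without first knowing it is one — the contracted graph $F^{(i)}_4$ is a \emph{fan} of four triangles about the hub $v$: six vertices $v,a,b,c,d,e$ with outer path $a\,b\,c\,d\,e$ and no edge $ae$. This fan contains no $K_{2,3}$ (no two vertices have three common neighbours), it is not the graph $G_0$ of Example~\ref{ex} (different degree sequence), and it satisfies conclusions (iii) and (iv) of Theorem~\ref{graph:useful:facts} vacuously or outright, so none of (ii)--(iv), Claim~\ref{claim:edge_two_cycle}, or Claim~\ref{claim:cycle_share_two} rules it out.

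The missing ingredient is part (i) of Theorem~\ref{graph:useful:facts}, which you never invoke: $F^{(i)}_4$ has $|V|=6$ and $|E|=9$, and deleting the hub $v$ leaves the bipartite path $a\,b\,c\,d\,e$, so $\sum_{x}b\bigl(F^{(i)}_4-x\bigr)\ge 1$ and hence
\[
2|E(F^{(i)}_4)|=18 \;>\; 17 \;\ge\; 3|V(F^{(i)}_4)|-\sum_{x}b\bigl(F^{(i)}_4-x\bigr),
\]
contradicting (i) since $F^{(i)}_4$ is connected, non-bipartite, and in $\mathcal{G}^{cis}\subset\mathcal{G}^{ci}$. This edge-count inequality is precisely how the paper kills the four-triangle fan; without it (or an equivalent substitute) your final paragraph does not yield a contradiction, because the wheel you hope to exhibit never arises.
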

\begin{proof} We firstly show the `moreover' part.
Suppose that $C^{(i)}$, $C^{(i+1)}$, and $C^{(i+2)}$ share a vertex $v$.
If $C^{(i+1)}$ is not a triangle, then by contracting ears of $F^{(i)}_3$ properly, we can obtain $G_1 $ in Figure~\ref{fig:AB}, and note that
$G_1\not\in \mathcal{G}^{ci}$ by  Theorem~\ref{graph:useful:facts}(iii).
Hence, $C^{(i+1)}$ is a triangle.
\begin{figure}[ht!]
  \centering
  \includegraphics[width=7.5cm,page=11]{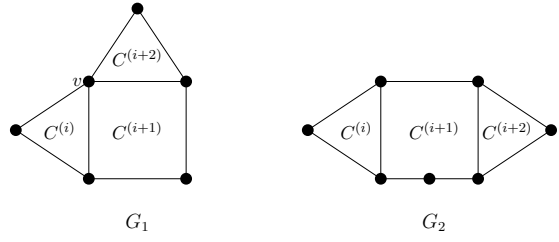}\\\caption{Some graphs not in $\mathcal{G}^{cis}$}\label{fig:AB}
\end{figure}
Suppose that there is another induced cycle $C^{(j)}$ containing the vertex $v$. Since $G$ is $2$-connected, we may assume that $j=i+3$. By the above argument, both $C^{(i+1)}$ and $C^{(i+2)}$ are triangles.
Then $F^{(i)}_4$ has four triangles and so $|V(F^{(i)}_4)|= 6$ and $|E(F^{(i)}_4)|=9$.
By deleting the vertex $v$, it becomes a bipartite graph, and so $\sum_{x} b(F^{(i)}_4-x)\ge 1$. Applying  Theorem~\ref{graph:useful:facts}(i),
we have $2|E(F^{(i)}_4)|< 3|V(F^{(i)}_4)|$, a contradiction.

It remains to show that $C^{(i+1)}$ has length at most 4.
Suppose that $C^{(i+1)}$ has length at least 5. Then by the previous argument,   $C^{(i)}$, $C^{(i+1)}$, and $C^{(i+2)}$ do not share one common vertex. By contracting ears of $F^{(i)}_3$ properly, we obtain $G_2$ in Figure~\ref{fig:AB}.
By Theorem~\ref{graph:useful:facts}(iv), $G_2\not\in\mathcal{G}^{ci}$,
a contradiction.
\end{proof}

Suppose that $C^{(2)}$ is a cycle of at least length 4.
By Claim~\ref{claim:vertex_cycle}, $C^{(2)}$ has length 4.
By the moreover part of Claim~\ref{claim:vertex_cycle}, $C^{(1)}$ and  $C^{(3)}$ do not share a vertex.
Not to be (G4), $m\ge 4$, and so consider $F^{(1)}_4$.
Then the triangles corresponding to $C^{(1)}$ and $C^{(4)}$ in $F^{(1)}_4$ are disjoint by Claim~\ref{claim:vertex_cycle}.
Then there are no two disjoint edges connecting those two triangles,
which is a contradiction to  Theorem~\ref{graph:useful:facts}(iv).
Suppose that $C^{(2)}$ is a triangle.
By Claim~\ref{claim:vertex_cycle}, not to be  (G5), $m\ge 5$.
Consider  $F^{(1)}_5$ and then
there are no two disjoint edges connecting two triangles
corresponding to $C^{(1)}$ and $C^{(5)}$  in $F^{(1)}_5$,
a contradiction to Theorem~\ref{graph:useful:facts}(iv).

\smallskip

Now we prove the `if' part of Theorem~\ref{thm:characterization:CI}.
By Proposition~\ref{lem:that_lemma}(ii),
if (G$i{}^{\prime}$) is in $\mathcal{G}^{cis}$ then (G$i$) is in  $\mathcal{G}^{cis}$, where (G${1}^{\prime}$)-(G${5}^{\prime}$) are in Figure~\ref{fig:Last6}.
Let $G$ be one of (G${1}^{\prime}$)-(G${5}^{\prime}$), and $\tau$ be its sign.
\begin{figure}[ht!]
  \centering
  \includegraphics[width=12.5cm,page=12]{all_figures.pdf}\\
  \caption{Graphs (G${1}^{\prime}$)-(G${5}^{\prime}$)}\label{fig:Last6}
\end{figure}

\smallskip
\noindent\textbf{(G${1}^{\prime}$) and (G${2}^{\prime}$)}
It is trivial that the graph (G${1}^{\prime}$) is in $\mathcal{G}^{cis}$,
since either $I_{(G,\tau)}=\{0\}$ and the triangle $\bw$ is odd-signed in $(G,\tau)$,  or $I_{(G,\tau)}=\left< B_{\bw}\right>$ and $\bw$ is even-signed in $(G,\tau)$.
Suppose that $G$ is (G${2}^{\prime}$).
Let $\bw$ and $\bw'$ be two triangles of $G$.
If one of  $\bw$ and $\bw'$ is even-signed in $(G,\tau)$, then $I_{(G,\tau)}$ is a complete intersection by Proposition~\ref{lem:sumevencycle}.
Suppose that $\bw$ and $\bw'$ are odd-signed in $(G,\tau)$.
Then $(G,\tau)$ has only one primitive walk, by Theorem~\ref{thm:signed:primitive}, which is the cycle $\bw''$ of length four.
Thus, $I_{(G,\tau)}=\left<B_{\bw''} \right>$, and so $I_{(G,\tau)}$ is a complete intersection.

\smallskip

\noindent\textbf{(G${3}^{\prime}$)} Suppose that $G$ is (G${3}^{\prime}$).
If one of triangles having a vertex of degree two is even-signed in $(G,\tau)$, then $I_{(G,\tau)}$ is a complete intersection by
 Proposition~\ref{lem:sumevencycle}, since we already show that the graph in (G${2}^{\prime}$) is in $\mathcal{G}^{cis}$.
Hence, suppose that both  triangles having a vertex of degree two are odd-signed in $(G,\tau)$. We will find $2(=r(G,\tau))$ binomials which generate $I_{(G,\tau)}$.
\begin{figure}[ht!]
  \centering
  \includegraphics[width=13cm,page=13]{all_figures.pdf}\\\caption{All possible primitive walks in $(G,\tau)$ when $G$ is (G$3^{\prime}$)}\label{fig:2connCIS4-1}
\end{figure}
By Theorem~\ref{thm:signed:primitive}, the set of primitive walks is a subset of $\{\bw_1,\ldots,\bw_5\}$.
If the triangle $\bw_1$ is even-signed in $(G,\tau)$, then $G$ has only three  primitive walks $\bw_1$, $\bw_2$ and $\bw_3$, and $B_{\bw_3}\in \left<B_{\bw_1},B_{\bw_2}\right>$ by Corollary~\ref{rmk2:Lem4_1}.
If $\bw_1$ is odd-signed in $(G,\tau)$, then $G$ has only  three primitive walks $\bw_3$, $\bw_4$ and $\bw_5$, and $B_{\bw_3}\in \left<B_{\bw_4},B_{\bw_5}\right>$ by Corollary~\ref{rmk2:Lem4_1}.

\smallskip

\noindent\textbf{(G${4}^{\prime}$)}  Suppose that $G$ is (G${4}^{\prime}$).
Similar to previous case, by Proposition~\ref{lem:sumevencycle}, we may assume that two  triangles are odd-signed in $(G,\tau)$. Then we will find $2(=r({G,\tau}))$ binomials which generate $I_{(G,\tau)}$.
Note that the six walks defined as Figure~\ref{fig:2connCIS5-1} are all possible primitive walks.
\begin{figure}[ht!]
  \centering
  \includegraphics[width=16cm,page=14]{all_figures.pdf}\\
   \caption{All possible primitive walks in $(G,\tau)$ when $G$ is (G$4^{\prime}$)}\label{fig:2connCIS5-1}
\end{figure}

If $\bw_1$ is even-signed in $(G,\tau)$, then   $(G,\tau)$ has only four primitive walks $\bw_1$, $\bw_2$, $\bw_3$ and $\bw_4$, and $B_{\bw_3},B_{\bw_4}\in \left<B_{\bw_1},B_{\bw_2}\right>$ by Corollary~\ref{rmk2:Lem4_1}.
If $\bw_1$ is odd-signed in $(G,\tau)$, then $(G,\tau)$ has only four primitive walks $\bw_3$, $\bw_4$, $\bw_5$ and $\bw_6$, and $B_{\bw_3},B_{\bw_4}\in \left<B_{\bw_5},B_{\bw_6}\right>$ by Corollary~\ref{rmk2:Lem4_1}.

\smallskip

\noindent\textbf{(G${5}^{\prime}$)}  Suppose that $G$ is  (G${5}^{\prime}$).
Similar to previous case, by Proposition~\ref{lem:sumevencycle}, we  may assume that two triangles having a vertex of degree two are odd-signed in $(G,\tau)$.
We will find $3(=r({G,\tau}))$ binomials which generate $I_{(G,\tau)}$.
Consider  closed walks $\mathbf{a}$, $\mathbf{b}$, $\mathbf{x}_1$, $\mathbf{x}_2$, and $\mathbf{x}_3$, defined as Figure~\ref{fig:G5}.

\begin{figure}[ht!]
  \centering
  \includegraphics[width=15cm,page=15]{all_figures.pdf}\\
   \caption{Five closed walks in $G$ when $G$ is (G$5^{\prime}$), where $\mathbf{x}_1\sim \mathbf{x}_3$ are even-signed in $(G,\tau)$}\label{fig:G5}
\end{figure}
We consider cases according to $\mu(\mathbf{a})$ and $\mu(\mathbf{b})$,
and then, in each case we will define six walks $\bw_1\sim \bw_6$ as Figure~\ref{fig:G55}.\footnote{The case where $\mu(\mathbf{a})=1$ and $\mu(\mathbf{b})=-1$ is similar to the second case of Figure~\ref{fig:G55}.}
Then $(G,\tau)$ has only 9 primitive walks, $\bw_1$, $\ldots$, $\bw_6$, $\mathbf{x}_1$, $\mathbf{x}_2$, and $\mathbf{x}_3$, and then   $B_{\bw_1},B_{\bw_2},B_{\bw_3}$ generate $I_{(G,\tau)}$, since
 it follows from Corollary~\ref{rmk2:Lem4_1} that
\[B_{\bw_4}\in \left<B_{\bw_1},B_{\bw_2}\right>,\ \ B_{\bw_5}\in \left<B_{\bw_1},B_{\bw_3}\right>,\ \ B_{\bw_6}\in \left<B_{\bw_2},B_{\bw_3}\right>,
\ \ B_{\bx_1},B_{\bx_2}\in \left<B_{\bw_3},B_{\bw_4}\right>,\ \ B_{\bx_3}\in \left<B_{\bw_1},B_{\bw_6}\right>.\]

\begin{figure}[h!]
\centering
\begin{tabular}{c|c}
\toprule
  $\mu(\mathbf{a})$, $\mu(\mathbf{b})$  & Primitive walks $\bw_1\sim\bw_6$ \\ \hline \hline
\multirow{2}{*}
{\!\!\!\!$\mu(\mathbf{a})=\mu(\mathbf{b})=1$\!\!\!\!}&\\
&{\!\!\!\includegraphics[width=12cm,page=16]{all_figures.pdf}} \\ \hline
\multirow{2}{*}{\!\!$\mu(\mathbf{a})=-1$,
$\mu(\mathbf{b})=1$\!\!\!}     &  \\
&{\!\!\!\includegraphics[width=12cm,page=17]{all_figures.pdf}} \\ \hline
\multirow{2}{*}{$\mu(\mathbf{a})=\mu(\mathbf{b})=-1$} &  \\
&{\!\!\!\includegraphics[width=12cm,page=18]{all_figures.pdf}} \\
\bottomrule
\end{tabular}\vspace{-0.2cm}
\caption{Six primitive walks when $G$ is (G$5^{\prime})$}\label{fig:G55}
\end{figure}

\end{proof}

\subsection{Proof of Theorem~\ref{thm:cis:general:graph}}\label{sec:general}

First, we note that each of $G_1\sim G_8$ in Figure~\ref{fig:graphs:not:cis} does not belong to $\mathcal{G}^{ci}$
by Algorithm~\ref{thm:CI_Graphs}.\footnote{For the graph $G_3$, \cite[Example 4.10]{BGR2015}, it was shown that the toric ideal $I_G$ is not a complete intersection by using the algorithm.
Fix $G_i$ for some in $i\in \{1,2,4,5,6,7,8\}$.
Since $G_i$ is not bipartite,
we need at least  $|E(G_i)|-|V(G_i)|$ nontrivial binomials to generate the ideal $I_{G_i}$.
Let $v$ be a vertex of degree two on the block isomorphic to $K_3$. Note that $b(G_i-v)=b(G_i)=0$, and
let us apply Algorithm~\ref{thm:CI_Graphs} starting from the vertex $v$.
If $i=4$, then there is no such closed walk $\bw$ of even length with $V(\bw)=W$ and so the algorithm returns \textsc{False}.
Otherwise, the binomial associated with any shortest closed walk $\bw$ of even length with $V(\bw)=W$ is trivial. In the remaining process, we consider the graph $G'_i=G_i-v$, which is not bipartite. Since $|E(G'_i)|-|V(G'_i)|=|E(G_i)|-|V(G_i)|-1$,
at most $|E(G_i)|-|V(G_i)|-1$ nontrivial binomials are obtained through the algorithm, and those cannot generate $I_{G_i}$. Consequently, the algorithm returns \textsc{False}.} Therefore, those eight graphs are not in $\mathcal{G}^{cis}$.

\begin{figure}[ht!]
  \centering
  \includegraphics[width=12cm,page=19]{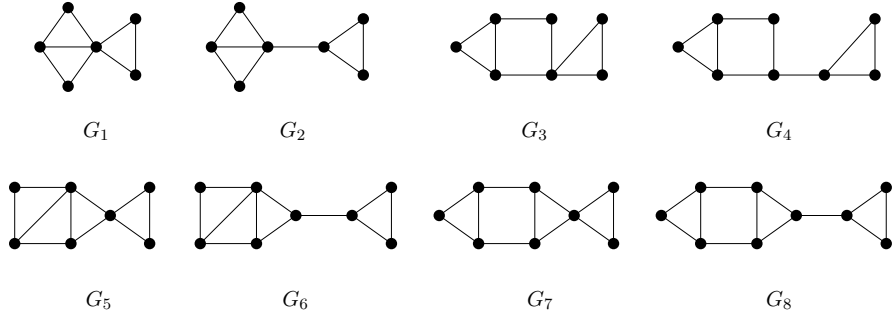}\\
   \caption{Eight graphs $G_1\sim G_8$ whose toric ideals are not complete intersections}  \label{fig:graphs:not:cis}
\end{figure}

\begin{proof}[Proof of Theorem~\ref{thm:cis:general:graph}]
It is sufficient to consider only connected graphs. We first show the `only if' part.
Suppose that $G$ is a connected graph in $\mathcal{G}^{cis}$.
For a nonedge block $F$ of $G$, $F$ is one of (G1)-(G5) by Proposition~\ref{lem:each:block} and Theorem~\ref{thm:characterization:CI}.
If $G$ has at most one nonedge block, then (i) or (ii) holds.
Now suppose that $G$ has at least two nonedge blocks.

\begin{claim}\label{claim:B1:C1}
Suppose that $F_1$ and $F_2$ are two nonedge blocks of $G$ such that $F_1$ is not {\rm(G1)}.
Let $\mathbf{p}$ be a shortest path from a vertex of $F_1$ to a vertex of $F_2$,
and
$C^{(i)}$ an induced cycle of $F_i$ having a vertex of $\mathbf{p}$.
Then $F_1$ is {\rm(G2)}, the cycle $C^{(1)}$ is a triangle, and $\deg_{F_1}(v_1)=2$, where $v_1=V(F_1)\cap V(\mathbf{p})$.
\end{claim}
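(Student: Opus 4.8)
The plan is to prove this claim by leveraging the structural constraints from Theorem~\ref{graph:useful:facts} together with the contraction operations of Proposition~\ref{lem:that_lemma}, exactly as in the preceding claims of this subsection. First I would set up the relevant induced subgraph: take $H=G[V(B_1)\cup V(\mathbf{p})\cup V(B_2)]$, which lies in $\mathcal{G}^{cis}$ by Proposition~\ref{prop:induced:CIS}. Since $B_1$ is a nonedge block and not (G1), it is one of (G2)--(G5) by Theorem~\ref{thm:characterization:CI}. The strategy is to rule out (G3), (G4), (G5) for $B_1$, and simultaneously pin down the geometry near the attachment vertex $v_1$, by contracting all ears of $H$ (via part (i) of Proposition~\ref{lem:that_lemma}) down to a small graph $H^*\in\mathcal{G}^{cis}$ built from triangles, and then comparing $H^*$ against the forbidden list $G_1\sim G_8$ in Figure~\ref{fig:graphs:not:cis}, or directly against the numerical and structural obstructions in Theorem~\ref{graph:useful:facts}.

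The key steps, in order, would be as follows. First I would argue that $B_1$ cannot be (G3), (G4), or (G5): each of these blocks contains two cycles glued to a central cycle, so attaching a path leading to a second nonedge block $B_2$ produces (after contracting ears) a graph containing three odd cycles in a configuration that violates one of (iii), (iv), or (v) of Theorem~\ref{graph:useful:facts}, or else reduces to one of the forbidden graphs $G_1\sim G_8$. In particular, part (v) of Theorem~\ref{graph:useful:facts} limits the number of non-bipartite blocks to two, which already constrains how many odd cycles can coexist once a second block is present. This forces $B_1$ to be (G2), i.e.\ a $2$-clique sum of two cycles. Second, with $B_1$ now a (G2) consisting of two cycles sharing an edge, I would show that the cycle $C^{(1)}$ meeting the path $\mathbf{p}$ must be a triangle: if $C^{(1)}$ had length at least four, contracting the ears of $H$ would yield a graph matching $G_2$ (or a similar configuration in Figure~\ref{fig:graphs:not:cis}), contradicting $H\in\mathcal{G}^{cis}$. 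Third, I would establish $\deg_{B_1}(v_1)=2$, meaning $v_1$ is \emph{not} an endpoint of the shared edge of the (G2): if $v_1$ were incident to the glued edge (degree three in $B_1$), then contracting ears would produce a graph in which an edge connecting the two odd triangles fails to exist away from $v_1$, violating (iii) of Theorem~\ref{graph:useful:facts}.

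The main obstacle I anticipate is the bookkeeping in the contraction-and-compare step: I must verify carefully that after contracting all the ears of $H$, every vertex except the essential attachment and clique-sum vertices has degree two, so that the contractions are legitimate ears in the sense of Proposition~\ref{lem:that_lemma}, and that the resulting $H^*$ is precisely one of the small forbidden graphs rather than something that happens to survive. A subtle point is handling the shortest path $\mathbf{p}$: because $\mathbf{p}$ is chosen to be shortest between $B_1$ and $B_2$, its internal vertices have degree two and it meets each block in a single vertex, which is what makes the ear-contraction clean; I would need to state this explicitly. I expect the (G5) case to be the most delicate, since (G5) already contains an internal diagonal and three triangles, so attaching a second block risks several simultaneous violations and I must select the cleanest one (most likely via the parity count in (i) of Theorem~\ref{graph:useful:facts} or via (v)) to get a crisp contradiction.

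Once these three points are secured, the claim follows immediately: $B_1$ is (G2), $C^{(1)}$ is a triangle, and $v_1$ has degree two in $B_1$. I would close by remarking that the argument is symmetric in nature but is applied only to $B_1$ here, since the hypothesis singles out $B_1$ as the block that is not (G1); the companion constraints on $B_2$ will be handled in the remainder of the proof of Theorem~\ref{thm:cis:general:graph}.
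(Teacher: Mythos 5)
Your skeleton---pass to an induced subgraph via Proposition~\ref{prop:induced:CIS}, contract ears via Proposition~\ref{lem:that_lemma}(i), and compare the contracted graph against forbidden configurations---is indeed the paper's strategy, but your primary mechanism for all three conclusions does not work, and only your parenthetical fallback (``or else reduces to one of the forbidden graphs $G_1\sim G_8$'') is the paper's actual argument. Concretely, parts (iii) and (iv) of Theorem~\ref{graph:useful:facts} are statements about \emph{$2$-connected} graphs, whereas every configuration you need to exclude here consists of a block joined by a (possibly trivial) path to a second block, hence is never $2$-connected; so neither (iii) nor (iv) applies to the contracted graph $H^*$. In particular, your justification of $\deg_{B_1}(v_1)=2$ via (iii) fails outright. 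Part (v) gives no contradiction either: after contraction, a (G3)/(G4)/(G5)-shaped $B_1$ and the triangle coming from $C^{(2)}$ produce exactly two non-bipartite blocks, which (v) permits. Even the edge count (i), which does apply to connected non-bipartite graphs, is insufficient in general: for a contracted (G3) with a triangle attached at a vertex one gets $2|E|=20\le 3|V|-\sum_v b(G-v)=21-1$, so no contradiction. This is why the paper's proof of the claim rests entirely on the eight explicit graphs $G_1\sim G_8$ of Figure~\ref{fig:graphs:not:cis}, whose exclusion from $\mathcal{G}^{ci}$ (hence from $\mathcal{G}^{cis}$) is verified separately by Algorithm~\ref{thm:CI_Graphs}: $G_1,G_2$ force $v_1$ to have degree two in the union of the two local cycles, $G_3,G_4$ force $C^{(1)}$ to be a triangle, and $G_5$--$G_8$ force $B_1$ to be (G2), after which $\deg_{B_1}(v_1)=2$ follows. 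Those eight verifications are exactly the burden your plan defers.

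A second, smaller divergence: you take $H=G[V(B_1)\cup V(\mathbf{p})\cup V(B_2)]$, i.e.\ the whole blocks. Then after contracting ears, $H^*$ need not be any of the eight small graphs (if $B_2$ is, say, (G5), its three cycles all survive into $H^*$), so the compare step as stated does not close without further induced-subgraph reductions that you do not make. The paper keeps $H$ minimal from the start: since $B_1$ is not (G1), it has an induced cycle $C^{(0)}$ sharing an edge with $C^{(1)}$, and one sets $H=G[V(C^{(0)})\cup V(C^{(1)})\cup V(C^{(2)})\cup V(\mathbf{p})]$; then every vertex off the attachment and clique-sum vertices has degree two, the contracted path $\mathbf{p}^*$ has length at most one, and $H^*$ lands directly in the list $G_1\sim G_8$. (Your observation that the shortest path $\mathbf{p}$ has no chords or edges back into the blocks is correct and needed; and invoking Theorem~\ref{thm:characterization:CI} to restrict $B_1$ to (G2)--(G5) is legitimate---the paper does this just before the claim---but it is not what drives the reduction.)
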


\begin{proof}
Since $F_1$ is not (G1), we can take another induced cycle $C^{(0)}$ of $F_1$ which shares an edge with $C^{(1)}$.
Let $H=G[V(C^{(0)})\cup V(C^{(1)})\cup V(C^{(2)})\cup V(\mathbf{p})]$.
Note that $H$ is in $\mathcal{G}^{cis}$ by Corollary~\ref{cor:sumevencycle}.
Using Proposition~\ref{lem:that_lemma}(i),
by contracting ears, we obtain a graph $H^*$ in $\mathcal{G}^{cis}$.
Let $C^{(i)}_*$ and $\mathbf{p}^*$ be the cycle and the path of $H^*$ corresponding to $C^{(i)}$ and $\mathbf{p}$, respectively. Note that $\mathbf{p}^*$ is a path of length at most one.
Since the graphs $G_1$ and $G_2$ in Figure~\ref{fig:graphs:not:cis} are not in $\mathcal{G}^{cis}$,
it follows that $v_1$ has degree two in
$H^*[ V(C^{(0)}_*)\cup V(C^{(1)}_*)]$.

From the fact that the graphs $G_3$ and $G_4$ in Figure~\ref{fig:graphs:not:cis} are not in $\mathcal{G}^{cis}$, together with Proposition~\ref{lem:that_lemma}(i), the cycle $C^{(1)}$ must be a triangle.
Similarly, from the fact that the graphs $G_5$, $G_6$, $G_7$, and $G_8$ in Figure~\ref{fig:graphs:not:cis} are not in $\mathcal{G}^{cis}$,
it follows that $F_1$ must be (G2), and therefore,
$\deg_{F_1}(v_1)=2$.
\end{proof}

By Claim~\ref{claim:B1:C1}, it is sufficient to show that $G$ has at most two nonedge blocks.
Suppose to contrary that $G$ has three nonedge blocks $F_1$, $F_2$, $F_3$.
Without loss of generality, we may assume that the distance between $F_1$ and $F_2$ is maximum among the distances between two of $F_1$, $F_2$ and $F_3$.
Let $\mathbf{p}$ be a shortest path between $F_1$ and $F_2$.
For each $i\in \{1,2\}$, we take an induced cycle $C^{(i)}$ of $F_i$ having a vertex of $\mathbf{p}$.
Let $H$ be a smallest induced connected subgraph of $G$ containing $V(C^{(1)})\cup V(C^{(2)})\cup V(\mathbf{p})\cup V(F_3)$. Note that $H\in\mathcal{G}^{cis}$ by Corollary~\ref{cor:sumevencycle}.
In addition, there is a vertex $w_i$  such that $\deg_{H}(w_i)=2$ and $w_i\in C^{(i)}$ for each $i=1,2$.
By Proposition~\ref{lem:that_lemma}(ii), by subdividing edges incident to $w_1$ and $w_2$ properly so that $C^{(1)}$ and $C^{(2)}$ become cycles of odd length, we can obtain a new graph $H'$ in $\mathcal{G}^{cis}$ with two non-bipartite blocks.
If $F_3$ is not bipartite,
then $H'$ has three non-bipartite blocks, and so
$H'\not\in\mathcal{G}^{ci}$ by  Theorem~\ref{graph:useful:facts}(v), a contradiction.
Thus, $F_3$ is bipartite.
Then we can find a vertex $w_3\in F_3$ with $\deg_{H'}(w_3)=2$ and let $H''$ be the graph obtained from $H'$ by subdividing an edge incident to $w_3$ once. Then $H''$ has three non-bipartite blocks and so $H'\not\in\mathcal{G}^{ci}$
 by Theorem~\ref{graph:useful:facts}(v).
On the other hand, $H'\in\mathcal{G}^{cis}$ by  Proposition~\ref{lem:that_lemma}(ii), and we reach a contradiction.

\smallskip

Now we show the `only if' part.
Note that for every sign $\tau$ of $G$,
any primitive walk in $(G,\tau)$ does not contain a pendent edge $e$
by Theorem~\ref{thm:signed:primitive}.
Thus, each graph satisfying (i) or (ii)  is in $\mathcal{G}^{cis}$ by Theorem~\ref{thm:characterization:CI}.
Consider graphs satisfying (iii).
Together with Proposition~\ref{lem:that_lemma}(ii), it is sufficient to show that each of the six graphs (H$1$)-(H$3$) and (H$1'$)-(H$3'$) in Figure~\ref{fig:reamin_six_graphs} is in $\mathcal{G}^{cis}$. We consider (H$1$)-(H$3$) first.
Let $G$ be one of (H${1}$)-(H${3}$), and $\tau$ be its sign.

\begin{figure}[ht!]
  \centering
  \includegraphics[width=10.5cm,page=20]{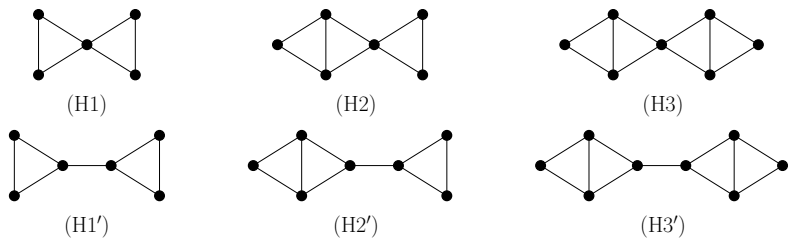}\\
   \caption{Graphs (H$1$)-(H$3$) and (H$1'$)-(H$3'$)}\label{fig:reamin_six_graphs}
\end{figure}

\noindent \textbf{(H$1$)} Suppose that $G$ is (H$1$). Let $\bw$ and $\bw'$ be two triangles in $G$. By Proposition~\ref{lem:sumevencycle}, it is sufficient to suppose that each of $\mathbf{\bw}$ and $\mathbf{\bw'}$ is odd-signed in $(G,\tau)$. Then there is only one primitive walk $\bw+\bw'$ and $r(G,\tau)=1$, which implies that $(G,\tau)$ is a complete intersection.

\smallskip

\noindent \textbf{(H$2$)} Suppose that $G$ is (H$2$). Let $\mathbf{a}$  be the triangle without a vertex of degree two. Since we already show that (G$2'$) in Figure \ref{fig:Last6} and (H$1$) are in $\mathcal{G}^{cis}$, by Proposition~\ref{lem:sumevencycle}, we may assume that the two triangles other than $\mathbf{a}$ are odd-signed in $(G,\tau)$. We will find $2(=r(G,\tau))$ binomials which generate $I_{(G,\tau)}$. By Theorem~\ref{thm:signed:primitive}, the six walks defined as Figure~\ref{fig:H22} are all possible primitive walks.

\begin{figure}[ht!]
  \centering
  \includegraphics[width=15cm,page=21]{all_figures.pdf}\\ \caption{All possible primitive walks in $(G,\tau)$ when $G$ is (H$2$)}\label{fig:H22}
\end{figure}

If $\mathbf{a}$ is even-signed in $(G,\tau)$, then $(G,\tau)$ has only four primitive walks $\bw_1$, $\bw_2$, $\bw_3$, and $\bw_4$, and $B_{\bw_3},B_{\bw_4} \in \left<B_{\bw_1},B_{\bw_2}\right>$ by Corollary \ref{rmk2:Lem4_1}.
If $\mathbf{a}$ is odd-signed in $(G,\tau)$,
then $(G,\tau)$ has only four primitive walks $\bw_3$, $\bw_4$, $\bw_5$, and $\bw_6$, and $B_{\bw_3},B_{\bw_4} \in \left<B_{\bw_5},B_{\bw_6}\right>$ by Corollary \ref{rmk2:Lem4_1}.

\smallskip

\noindent \textbf{(H$3$)} Suppose that $G$ is (H$3$).
Since we already show that (H$2$) is in  $\mathcal{G}^{cis}$,
by Proposition~\ref{lem:sumevencycle}, we may assume that both  triangles having a vertex of degree two are odd-signed in $(G,\tau)$.
Then we will find $3(=r(G,\tau))$ binomials which generate $I_{(G,\tau)}$. Consider six walks $\mathbf{a}$, $\mathbf{b}$, $\mathbf{x}_1\sim\mathbf{x}_4$ defined as Figure~\ref{fig:H3}.

\begin{figure}[h!]
  \centering
  \includegraphics[width=10cm,page=22]{all_figures.pdf}\\
   \caption{Six closed walks in $G$ when $G$ is (H$3$), where $\mathbf{x}_1\sim\mathbf{x}_4$ are even-signed in $(G,\tau)$}\label{fig:H3}
\end{figure}

 Now we divide cases according to $\mu(\mathbf{a})$ and $\mu(\mathbf{b})$. In each case, we define eight walks $\mathbf{w}_1\sim\mathbf{w}_8$ as Figure \ref{fig:H3333}.\footnote{The case where $\mu(\mathbf{a})=-1$ and $\mu(\mathbf{b})=1$ is similar to the second case of Figure \ref{fig:H3333}.} Then $(G,\tau)$ has only 12 primitive walks, $\mathbf{w}_1, \dots, \mathbf{w}_8$, $\mathbf{x}_1$,\dots, $\mathbf{x}_4$, and then  $B_{\bw_1},B_{\bw_2},B_{\bw_3}$ generate $I_{(G,\tau)}$, since it follows from Corollary~\ref{rmk2:Lem4_1} that
\begin{eqnarray*}
&&B_{\bw_4}\in \left<B_{\bw_1},B_{\bw_2}\right>,\quad B_{\bw_5},B_{\bw_6}\in \left<B_{\bw_1},B_{\bw_3}\right>,\quad B_{\bw_7}, B_{\bw_8}\in \left<B_{\bw_2},B_{\bw_3}\right>,\\
&& B_{\bx_1},B_{\bx_2}\in \left<B_{\bw_2},B_{\bw_6}\right>,\quad B_{\bx_3},B_{\bx_4}\in \left<B_{\bw_2},B_{\bw_5}\right>.
\end{eqnarray*}

\begin{figure}[h!]
\centering
\begin{tabular}{c|c}
\toprule
  $\mu(\mathbf{a})$, $\mu(\mathbf{b})$  & Primitive walks $\bw_1\sim\bw_8$ \\ \hline \hline
\multirow{2}{*}
{$\mu(\mathbf{a})=\mu(\mathbf{b})=1$}&\\
& {\includegraphics[width=12.2cm,page=23]{all_figures.pdf}} \\ \hline
\multirow{2}{*}{$\mu(\mathbf{a})=1$,
$\mu(\mathbf{b})=-1$}     &  \\
& {\includegraphics[width=12.2cm,page=24]{all_figures.pdf}} \\ \hline
\multirow{2}{*}{$\mu(\mathbf{a})=\mu(\mathbf{b})=-1$} &  \\
& {\includegraphics[width=12.2cm,page=25]{all_figures.pdf}} \\
\bottomrule
\end{tabular}\vspace{-0.2cm}
\caption{Eight primitive walks when $G$ is (H$3$)}\label{fig:H3333}
\end{figure}

Let $(G,\tau)$ and $(G,\tau')$ be such that
$G$ is (H$i$), $G'$ is (H$i'$) ($i\in \{1,2,3\}$), and the sign coincides on a cycle.
Then $r(G,\tau)=r(G',\tau')$ and the primitive walks are also corresponding.
Similar to the argument of (H$i$), we can conclude that (H$i'$) is in $\mathcal{G}^{cis}$.
\end{proof}

\section*{Acknowledgement}
The authors would like to thank the anonymous reviewers for their insightful comments and suggestions.
JiSun Huh was supported by Basic Science Research Program through the National Research Foundation of Korea (NRF) funded by the Ministry of Science, ICT and Future Planning (NRF-2020R1C1C1A01008524).
Sangwook Kim was supported by Basic Science Research Program through the National Research Foundation of Korea(NRF) funded by the Ministry of Education
(NRF-2017R1D1A3B03031839).
Boram Park was supported by Basic Science Research Program through the National Research Foundation of Korea (NRF) funded by the Ministry of Science, ICT and Future Planning (NRF-2018R1C1B6003577).

\bibliographystyle{plain}

\setcounter{section}{0}
\renewcommand{\thesection}{\Alph{section}}

\section{Appendix}

\subsection{Proof of Propositions~\ref{rmk:basic:binomial}}

\begin{proof}[Proof of Proposition~\ref{rmk:basic:binomial}]

Recall the definition of a signed multigraph $(G_{\mathbb{b}},\tau_{\mathbb{b}})$,
that is, $G_{\mathbb{b}}$ is induced by $|b_e|$ copies of $e$ for every edge $e$, and
$\tau_{\mathbb{b}}(e,v)=\tau(e',v)$ if $e$ is a copy of $e'\in E(G)$.
For simplicity, we let $H=G_{\mathbb{b}}$.

Let $E_H^{++}(v)$ (resp.  $E_H^{+-}(v)$) be the (multi)set of edges $e$ of $H$ incident to $v$ with $b_e>0$ and $\tau_{\mathbb{b}}(e,v)=1$ (resp. $\tau_{\mathbb{b}}(e,v)=-1$).
Similarly, let $E_H^{-+}(v)$ (resp.  $E_H^{--}(v)$) be the (multi)set of edges $e$ of $H$ incident to $v$ if $b_e<0$ and $\tau_{\mathbb{b}}(e,v)=1$ (resp. $\tau_{\mathbb{b}}(e,v)=-1$).
Note that $\deg_H(v)=|E_H^{++}(v)|+|E_H^{--}(v)|+|E_H^{+-}(v)|+|E_H^{-+}(v)|$, where the size $|M|$ of a multiset $M$ counts multiplicity.

\begin{claim}\label{eq:H}
For every vertex $v$ of $G$,\[|E_H^{++}(v)|+|E_H^{--}(v)|=|E_H^{+-}(v)|+|E_H^{-+}(v)|.\]
\end{claim}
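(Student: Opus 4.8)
The plan is to read the identity off directly from the $v$-entry of the equation $A\mathbb{b}=\mathbb{0}$. First I would recall that, by the definition of the incidence matrix $A=A(G,\tau)$, the coordinate of $A\mathbb{b}$ indexed by a vertex $v$ equals $\sum_{e:\,v\in e}\tau(e,v)\,b_e$, the sum ranging over the edges $e$ of the original simple graph $G$ incident to $v$. Since $A\mathbb{b}=\mathbb{0}$ by hypothesis, this sum is $0$, and the whole proof consists of unpacking that single scalar equation into the stated multiset identity.

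Next I would relate the sum to the four multisets. For each edge $e'$ of $G$ incident to $v$ with $b_{e'}\neq 0$, the multigraph $H=G_{\mathbb{b}}$ contains exactly $|b_{e'}|$ copies of $e'$, all carrying the same sign $\tau_{\mathbb{b}}(\cdot,v)=\tau(e',v)$ at $v$. Hence every such $e'$ places its $|b_{e'}|$ copies into exactly one of $E_H^{++}(v)$, $E_H^{+-}(v)$, $E_H^{-+}(v)$, $E_H^{--}(v)$, determined by the sign of $b_{e'}$ together with the value of $\tau(e',v)$; consequently each multiset cardinality equals the sum of $|b_{e'}|$ over the edges $e'$ of $G$ falling in the corresponding class.

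The key observation is then a short case check on the contribution $\tau(e',v)\,b_{e'}$ of a single edge: it equals $+|b_{e'}|$ exactly when $(b_{e'}>0,\ \tau(e',v)=1)$ or $(b_{e'}<0,\ \tau(e',v)=-1)$, and it equals $-|b_{e'}|$ exactly when $(b_{e'}>0,\ \tau(e',v)=-1)$ or $(b_{e'}<0,\ \tau(e',v)=1)$. Summing over all edges incident to $v$ and grouping by class therefore rewrites $0=\sum_{e:\,v\in e}\tau(e,v)b_e$ as
\[
0=|E_H^{++}(v)|+|E_H^{--}(v)|-|E_H^{+-}(v)|-|E_H^{-+}(v)|,
\]
and transposing the negative terms yields the claimed equality.

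I do not expect a genuine obstacle here: the statement is essentially a reformulation of the flow condition $A\mathbb{b}=\mathbb{0}$ at $v$. The only point demanding care is the sign bookkeeping in the case check — making sure that $|b_{e'}|$ is indeed the multiplicity of copies of $e'$ in $H$ and that $\tau(e',v)b_{e'}$ carries the sign matching the superscript pattern of the multiset into which those copies are sorted.
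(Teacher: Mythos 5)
Your proposal is correct and follows essentially the same route as the paper: both unpack the $v$-entry of $A\mathbb{b}=\mathbb{0}$, namely $\sum_{e:\,v\in e}\tau(e,v)b_e=0$, into the four multiset cardinalities via the observation that each edge $e'$ contributes $|b_{e'}|$ copies to exactly one class. The only cosmetic difference is that the paper groups the sum by the value of $\tau(e,v)$ and equates the two halves, while you group by the sign of the product $\tau(e',v)b_{e'}$; the bookkeeping is identical.
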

\begin{proof}[Proof of Claim~\ref{eq:H}]
Let $v$ be a vertex  in $H$. The entry of $A\mathbb{b}$ corresponding to $v$
is $\sum_{e\in E_G(v)}b_e\tau(e,v)=0$, where $E_G(v)$ denotes the set of all edges of $G$ incident to $v$. Note that
\begin{eqnarray*}&&\sum_{e\in E_G(v)}b_e\tau(e,v)=0   \    \
 \Leftrightarrow    \  \ \sum_{\substack{e\in E_G(v)\\ \tau(e,v)=1}}b_e = \sum_{\substack{e\in E_G(v) \\ \tau(e,v)=-1}}b_e, \end{eqnarray*}
 and by definition,
\begin{eqnarray*}&&\sum_{\substack{e\in E_G(v)\\ \tau(e,v)=1}}b_e= |E_H^{++}(v)|-|E_H^{-+}(v)|,
\qquad \qquad \sum_{\substack{e\in E_G(v) \\ \tau(e,v)=-1}}b_e=  |E_H^{+-}(v)|-|E_H^{--}(v)|.
 \end{eqnarray*}
Thus,
$|E_H^{++}(v)|-|E_H^{-+}(v)|=|E_H^{+-}(v)|-|E_H^{--}(v)|$, and so the claim holds.
\end{proof}
For simplicity, let
\begin{eqnarray*}&&
E_H^+(v) = E_H^{++}(v) \cup E_H^{--}(v), \qquad \qquad  E_H^-(v) = E_H^{+-}(v)\cup E_H^{-+}(v).
 \end{eqnarray*}
Take any edge $e_{j_1}\in E(H)$ from a nontrivial connected component $D$ of $H$, say $v_{i_1}$ and $v_{i_2}$ are the endpoints, and let $\bw_1$ be the walk $v_{i_1}e_{j_1}v_{i_2}$.
Without loss of generality, we may assume $e_{j_1}\in E_H^+(v_{i_2}) $ (other cases are similar).
By Claim~\ref{eq:H}, we can take an edge $e_{j_2}\in E_H^-(v_{i_2})$, say the endpoint of $e_{j_2}$ other than $v_{i_2}$ is $v_{i_3}$, and then we have a walk $\bw_2: v_{i_1}e_{j_1}v_{i_2}e_{j_2}v_{i_3}$ so that two edge terms incident to $v_{i_2}$ belong to
$E_H^+(v_{i_2}) $ and $E_H^-(v_{i_2})$, respectively.
We choose a walk repeatedly by a same way. To be precise,
suppose that a walk $\bw_{\ell}: v_{i_1}e_{j_1}v_{i_2} \cdots v_{i_{\ell}} e_{j_{\ell}} v_{i_{\ell+1}}$ is selected. Then repeat the following process ($\S$) until no more edge can be selected.
\begin{itemize}
  \item[($\S$)] If $e_{j_{\ell}}\in E_H^+(v_{i_{\ell+1}})$, then we choose an edge $e_{j_{\ell+1}}\in  E_H^-(v_{i_{\ell+1}})-\{e_{j_1},\ldots,e_{j_{\ell}}\}$ (as long as it is not empty), and
if $e_{j_{\ell}}\in E_H^-(v_{i_{\ell+1}})$, then we choose an edge $e_{j_{\ell+1}}\in  E_H^+(v_{i_{\ell+1}})-\{e_{j_1},\ldots,e_{j_{\ell}}\}$ (as long as it is not empty), and then let $v_{i_{\ell+2}}$ be the other endpoint of $e_{j_{\ell+1}}$ and set $\bw_{\ell+1}: v_{i_1}e_{j_1}v_{i_2} \cdots v_{i_{\ell}} e_{j_{\ell}} v_{i_{\ell+1}}e_{j_{\ell+1}} v_{i_{\ell+2}}$.
\end{itemize}
Let $\bw_D$ be the walk lastly obtained.
Since every edge of $D$ is selected at most once at each step in the process, the length of $\bw_D$ is bounded by the number of edges in $D$. We choose such $\bw_D$ as long as possible (maximizing its length).

\begin{claim}\label{claim:even}
For each connected component $D$ of $H$,
the walk $\bw_D$ is an Eulerian of $D$ and it is an even-signed closed walk in $(G,\tau)$.
\end{claim}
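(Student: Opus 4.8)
The plan is to exploit the defining feature of the selection process~$(\S)$: at every \emph{internal} vertex term $v_i$ of $\bw_D$, the two incident edge terms $e_{i-1}$ and $e_i$ always lie in \emph{opposite} classes, one in $E_H^+(v_i)$ and the other in $E_H^-(v_i)$. Writing $\sigma(e)\in\{1,-1\}$ for the sign of $b_e$, membership of $e$ in $E_H^+(v)$ (resp. $E_H^-(v)$) is exactly the condition $\sigma(e)\tau(e,v)=1$ (resp. $=-1$). Hence the alternation gives $\sigma(e_{i-1})\sigma(e_i)\,\tau(e_{i-1},v_i)\tau(e_i,v_i)=-1$, so that $v_i$ is unbalanced (i.e.\ $\tau(e_{i-1},v_i)\tau(e_i,v_i)=1$) precisely when $\sigma(e_{i-1})\neq\sigma(e_i)$. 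I would record this reformulation at the outset, since it turns both the closedness and the parity questions into bookkeeping about the $\{1,-1\}$-sequence $\sigma(e_1),\dots,\sigma(e_k)$.

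First I would show that the process can only halt at the initial vertex $v_1$, so that $\bw_D$ is closed. Suppose it halted at a terminal vertex $v\neq v_1$, entered along $e_k\in E_H^{X}(v)$ for some $X\in\{+,-\}$; halting means every edge of $E_H^{\bar X}(v)$ is already used. Each occurrence of $v$ other than the terminal one is internal and, by the alternation, consumes exactly one edge from each of $E_H^{+}(v)$ and $E_H^{-}(v)$, while the terminal entry consumes the extra edge $e_k\in E_H^{X}(v)$. Thus the number of used $E_H^{\bar X}(v)$-edges equals the number $t$ of internal occurrences, whereas the number of used $E_H^{X}(v)$-edges equals $t+1$. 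Since all $E_H^{\bar X}(v)$-edges are used, Claim~\ref{eq:H} gives $t=|E_H^{\bar X}(v)|=|E_H^{X}(v)|$, forcing $t+1>|E_H^{X}(v)|$, which is impossible. Hence $\bw_D$ returns to $v_1$. Running the same count at $v_1$, where the start contributes $e_1$ and the terminal contributes $e_k$, shows the halt is consistent only when $e_1$ and $e_k$ lie in opposite classes and \emph{all} edges incident to $v_1$ are used; in particular the used edges are class-balanced at $v_1$ as well.

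Next I would upgrade ``closed'' to ``Eulerian'' using the maximality of $\bw_D$ and a splicing argument. At every vertex the used edges are class-balanced (immediate at internal vertices, just shown at $v_1$), so by Claim~\ref{eq:H} the \emph{unused} edges are class-balanced at every vertex too. If some edge of $D$ were unused, connectivity of $D$ would supply an unused edge $f$ incident to a vertex $v$ lying on $\bw_D$; since all $v_1$-edges are used, $v\neq v_1$, and both $E_H^+(v)$ and $E_H^-(v)$ still contain an unused edge. Applying process~$(\S)$ to the balanced multigraph of unused edges, starting at $v$, yields (by the same ``halts only at its start'' argument) a closed detour based at $v$ whose first and last edges lie in opposite classes. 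Choosing its orientation so that its first edge is class-opposite to the edge of $\bw_D$ leaving $v$, I can splice this detour into $\bw_D$ at an occurrence of $v$; the result is again an alternating trail, hence a strictly longer walk producible by $(\S)$, contradicting the maximality of $\bw_D$. Therefore $\bw_D$ traverses every edge of $D$, and since $(\S)$ never reuses an edge, it is an Eulerian closed walk.

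Finally, evenness is immediate from the reformulation. By the first paragraph, the unbalanced vertex terms of the closed walk $\bw_D$ are exactly the positions where consecutive signs differ in the cyclic sequence $\sigma(e_1),\dots,\sigma(e_k)$, where $e_k$ and $e_1$ are compared at $v_1$; this comparison is legitimate because $e_1,e_k$ were shown to be class-opposite. A cyclic $\{1,-1\}$-sequence always has an even number of sign changes, so $\bw_D$ has an even number of unbalanced vertex terms, i.e.\ $\mu(\bw_D)=1$. I expect the splicing/maximality step to be the main obstacle, being the only point that requires care that the inserted detour respects the class-alternation at the splice vertex and that the enlarged walk is still admissible for $(\S)$; the closedness and parity steps are then routine counting.
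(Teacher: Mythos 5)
Your proof is correct and follows essentially the same route as the paper's: the halting-only-at-the-start count via Claim~\ref{eq:H}, the class-balance of unused edges plus a spliced detour contradicting maximality for the Eulerian property, and sign bookkeeping for evenness (your cyclic sign-change argument is exactly the paper's Claim~\ref{claim:sign}, which it proves right afterward, in place of its direct vertex-by-vertex count of unbalanced terms). If anything, you are more careful than the paper at two points it glosses over: that the first and last edges of $\bw_D$ lie in opposite classes at $v_1$, and that the detour can be oriented so the spliced walk still respects the alternation required by~($\S$).
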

\begin{proof}
For simplicity, we denote $\bw_D$ by $\bw$, and let $\bw:v_{i_1}e_{j_1}\cdots e_{j_t}v_{i_{t+1}}$.
Without loss of generality, we assume that $e_{j_t}\in E_H^+(v_{i_{t+1}})$.
First, we show that $\bw$ is closed.
Suppose that $v_{i_1}\neq v_{i_{t+1}}$. Let $I$ be the set of indices $\ell$ for the vertices $v_{i_{\ell}}$ of $\bw$ such that $v_{i_{\ell}}=v_{i_{t+1}}$. Clearly, $t+1\in I$ and $1\not\in I$.
By the way of choosing the walk, for each $\ell\in I\setminus\{t+1\}$, one of $e_{j_{\ell-1}}$ and $e_{j_{\ell}}$ counts 1 of one of the sets $E_H^+(v_{i_{\ell}})$ and $E_H^-(v_{i_{\ell}})$, and the other edge counts 1 of the other set.
Hence,
\[  |E_H^+(v_{i_{t+1}}) \cap\{e_{j_1},\ldots,e_{j_{t-1}}\}| =|E_H^-(v_{i_{t+1}}) \cap\{e_{j_1},\ldots,e_{j_{t-1}}\}|.\]
Since $e_{j_{t}}\in E_H^+(v_{i_{t+1}})$, by Claim~\ref{eq:H}, it follows that
$E_H^-(v_{i_{t+1}})-\{e_{j_1},\ldots,e_{j_t}\}\neq\emptyset$. Thus, we reach a contradiction that $\bw_t$ is a longest one. Thus, $v_{i_1}=v_{i_{t+1}}$, which means $\bw$ is a closed walk.

Suppose that there is an edge of $D$ not covered by the closed walk $\bw$.
Let $D'$ be the graph obtained from $D$ be deleting the edges of $[\bw]$.
Then by the choice of $\bw$, it follows that
\[  \forall v\in  V(D), \quad |E^+_D(v)|=|E^-_D(v)|.\]
By taking a nontrivial connected component of $D'$, we can proceed the same argument in $(\S)$ to obtain a closed walk $\bw'$.
Since both $\bw$ and $\bw'$ are closed, we may assume that both walks start at the vertex $v_{i_1}$. Then the closed walk $\bw+\bw'$ is a longer closed walk which can be obtained from the procedure $(\S)$, a contradiction.
Hence, $\bw$ is an Eulerian of $D$.

It remains to show that $\bw$ is an even-signed closed walk in $(G,\tau)$.
From the definition, it is clear that $\bw$ is a closed walk in $G$, and so it is sufficient to show that $\bw$ has an even number of unbalanced vertex terms.
Note that if the $\ell$th vertex term $v_{i_{\ell}}$ is unbalanced, then
for the three consecutive terms  $e_{j_{\ell-1}}v_{i_{\ell}}e_{j_{\ell}}$ of $\bw$,
one of the four holds: (1) $e_{j_{\ell-1}}\in E_H^{++}(v_{i_{\ell}})$ and $e_{j_{\ell}}\in E_H^{-+}(v_{i_{\ell}})$;
(2) $e_{j_{\ell}}\in E_H^{++}(v_{i_{\ell}})$ and $e_{j_{\ell-1}}\in E_H^{-+}(v_{i_{\ell}})$;
(3) $e_{j_{\ell-1}}\in E_H^{--}(v_{i_{\ell}})$ and $e_{j_{\ell}}\in E_H^{+-}(v_{i_{\ell}})$;
(4) $e_{j_{\ell}}\in E_H^{--}(v_{i_{\ell}})$ and $e_{j_{\ell-1}}\in E_H^{+-}(v_{i_{\ell}})$.
Then the number of unbalanced vertex terms of $\bw$ is
\[\sum_{v\in V(D)} \left( \left||E_H^{++}(v)|-|E_H^{+-}(v)|\right|  +  \left||E_H^{--}(v)|-|E_H^{-+}(v)|\right| \right) =
\sum_{v\in V(D)} 2 \left||E_H^{++}(v)|-|E_H^{+-}(v)|\right|
,\]
where the equality is from Claim~\ref{eq:H}.
Hence, $\bw$ is even-signed.
\end{proof}

By Claim~\ref{claim:even}, we consider a walk $\bw_D:v_{i_1}e_{j_1}v_{i_2}\cdots v_{i_t}e_{j_t}v_{i_{t+1}}$ and its associated binomial $B_{\bw_D}$ for a fixed connected component $D$ of $H$.
To complete the proof, it is sufficient to show that
\begin{eqnarray*}\label{eq:H:last}
  && \mathbb{e}^{\mathbb{b}^+} = \prod_{\substack{D:\text{ connected }\\ \text{component of }H}} B_{\bw_D}^+,\qquad\text{and}\qquad  \mathbb{e}^{\mathbb{b}^-}
  =\prod_{\substack{D:\text{ connected }\\ \text{component of }H}} B_{\bw_D}^-.
\end{eqnarray*}

\begin{claim}\label{claim:sign}
For any $\ell\in [t]$,
$b_{e_{j_{\ell-1}}} b_{e_{j_{\ell}}}<0$  if and only if the $\ell$th vertex term $v_{i_{\ell}}$ is unbalanced.
\end{claim}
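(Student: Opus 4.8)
The plan is to read the claim directly off the alternating selection rule $(\S)$ that defines $\bw_D$: that rule is designed so that, at each pass through a vertex, the entering edge and the leaving edge fall into opposite classes $E_H^+$ and $E_H^-$ of that vertex. Converting this ``opposite classes'' statement into a sign condition on $b_{e_{i-1}}b_{e_i}$, and comparing it with the defining condition for an unbalanced vertex term, will give the claim at once for every interior pass. First, though, I should fix the cyclic convention $e_0 := e_t$, so that $i=1$ refers to the pass at $v_1 = v_{t+1}$.

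First I would treat the indices $2 \le i \le t$. For such $i$ the edge $e_i$ was chosen by $(\S)$ from the class of $v_i$ opposite to the one containing $e_{i-1}$, so exactly one of $e_{i-1}, e_i$ lies in $E_H^+(v_i)$ and the other in $E_H^-(v_i)$. Since $E_H^+(v)=\{e : b_e\tau(e,v)>0\}$ and $E_H^-(v)=\{e : b_e\tau(e,v)<0\}$, this reads
\[
b_{e_{i-1}}\,b_{e_i}\,\tau(e_{i-1},v_i)\,\tau(e_i,v_i) < 0 .
\]
Because $v_i$ is unbalanced exactly when $\tau(e_{i-1},v_i)\tau(e_i,v_i)=1$ and balanced exactly when this product equals $-1$, the inequality yields $b_{e_{i-1}}b_{e_i}<0$ in the unbalanced case and $b_{e_{i-1}}b_{e_i}>0$ otherwise. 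This is precisely the claim for $2\le i\le t$.

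The hard part will be the remaining index $i=1$, i.e.\ the closing pair $(e_t,e_1)$ at $v_1=v_{t+1}$, since this is the one pass that $(\S)$ does not directly constrain. To handle it I would use that $\bw_D$ is Eulerian on $D$ (Claim~\ref{claim:even}) and that $|E_H^+(v_1)|=|E_H^-(v_1)|$ (Claim~\ref{eq:H}). Being Eulerian, $\bw_D$ uses every edge of $D$ at $v_1$ exactly once, and these edges are grouped into the intermediate passes through $v_1$ together with the closing pair $e_t, e_1$. Each intermediate pass, by the interior case just proved, contributes one edge of $E_H^+(v_1)$ and one of $E_H^-(v_1)$; since the two classes have equal size by Claim~\ref{eq:H}, the closing pair must also split one edge into each class. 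As Claim~\ref{claim:even} is set up with $e_t\in E_H^+(v_{t+1})$, this forces $e_1\in E_H^-(v_1)$, and the very same sign computation then gives the claim for $i=1$. Alternatively, one may argue by parity: the cyclic sequence $\operatorname{sgn}(b_{e_1}),\dots,\operatorname{sgn}(b_{e_t})$ always has an even number of sign changes, while $\bw_D$ has an even number of unbalanced vertex terms by Claim~\ref{claim:even}; since these two counts already agree on $2\le i\le t$, they must agree at $i=1$ as well.
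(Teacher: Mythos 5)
Your proposal is correct, and for the indices $2\le i\le t$ it is essentially the paper's own argument: the paper proves Claim~\ref{claim:sign} by a case analysis on the signs of $b_{e_{i-1}}$ and $b_{e_i}$, using exactly the fact from ($\S$) that $e_{i-1}$ and $e_i$ lie in opposite classes $E_H^+(v_i)$ and $E_H^-(v_i)$; your single inequality $b_{e_{i-1}}b_{e_i}\tau(e_{i-1},v_i)\tau(e_i,v_i)<0$ is a compact repackaging of that case analysis via the identities $E_H^+(v)=\{e: b_e\tau(e,v)>0\}$ and $E_H^-(v)=\{e: b_e\tau(e,v)<0\}$. Where you genuinely go beyond the paper is the closing index $i=1$: the paper's proof treats all $i\in[t]$ uniformly and thereby tacitly assumes that the closing pair $(e_t,e_1)$ also splits between $E_H^+(v_1)$ and $E_H^-(v_1)$, even though the rule ($\S$) only enforces alternation at the positions $2\le i\le t$. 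Your Eulerian counting argument --- each intermediate pass through $v_1$ consumes one edge of each class, $\bw_D$ uses every edge of $D$ exactly once by Claim~\ref{claim:even}, and $|E_H^+(v_1)|=|E_H^-(v_1)|$ by Claim~\ref{eq:H}, so the leftover pair $\{e_1,e_t\}$ must split --- supplies precisely the missing justification, and it is non-circular since Claim~\ref{claim:even} precedes Claim~\ref{claim:sign} and does not rely on it. Your alternative parity argument (a cyclic $\pm1$ sequence has an even number of sign changes, while $\bw_D$ has an even number of unbalanced vertex terms because it is even, and the two indicator functions agree at all positions $2\le i\le t$) is an equally valid, arguably slicker, way to close the loop. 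One cosmetic remark: you need not import the normalization $e_t\in E_H^+(v_{t+1})$ from the proof of Claim~\ref{claim:even}, which was a ``without loss of generality'' local to that proof; your counting already shows the pair splits, which is all the sign computation at $i=1$ requires.
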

\begin{proof}[Proof of Claim~\ref{claim:sign}]
Without loss of generality, we may assume that $b_{e_{j_{\ell}}}>0$. First, we suppose that $b_{e_{j_{\ell-1}}}>0$. Then $e_{j_{\ell-1}}\in E_H^{+-}(v_{i_{\ell}})\cup E_H^{++}(v_{i_{\ell}})$. More precisely, if $e_{j_{\ell-1}}\in E_H^{+-}(v_{i_{\ell}})$ then $e_{j_{\ell}}\in E_H^{++}(v_{i_{\ell}})$, and if $e_{j_{\ell-1}}\in E_H^{++}(v_{i_{\ell}})$ then $e_{j_{\ell}}\in E_H^{+-}(v_{i_{\ell}})$.
Thus, $\tau(e_{j_{\ell-1}},v_{i_{\ell}})\tau(e_{j_{\ell}},v_{i_{\ell}})=-1$, which implies that the vertex term $v_{i_{\ell}}$ is not unbalanced. On the other hand, if we suppose that $b_{e_{j_{\ell-1}}}<0$, then $e_{j_{\ell-1}}\in E_H^{-+}(v_{i_{\ell}})\cup  E_H^{--}(v_{i_{\ell}})$. More precisely, if $e_{j_{\ell-1}}\in E_H^{-+}(v_{i_{\ell}})$ then $e_{j_{\ell}}\in E_H^{++}(v_{i_{\ell}})$, and if $e_{j_{\ell-1}}\in E_H^{--}(v_{i_{\ell}})$ then $e_{j_{\ell}}\in E_H^{+-}(v_{i_{\ell}})$. Thus, $\tau(e_{j_{\ell-1}},v_{i_{\ell}})\tau(e_{j_{\ell}},v_{i_{\ell}})=1$, which implies that $v_{i_{\ell}}$ is unbalanced.
\end{proof}

Take any edge $e$ of $H$. We assume that $b_e>0$, and the other case is similar.
Let $D$ be the connected component of $H$ containing the edge $e$.
We will show that the power of $e$ in
$B_{\bw_D}^+$ is equal to $b_e$.
If there is no unbalanced vertex term in ${\bw_D}$, then $B_{\bw_D}=B^+_{\bw_D}-1$ and so clearly
it holds.
Suppose that $\bw_D$ has an unbalanced vertex term.
Then let $\bw_D=\bw_0+\cdots+\bw_{2k-1}$ be a balanced section-decomposition of $\bw_D$ so that the first edge term $e_{j_1}$ is in $\bw_0$ if $b_{e_{j_1}}>0$ and
$e_{j_1}$ is in $\bw_1$ if $b_{e_{j_1}}<0$.
By Claim~\ref{claim:sign}, for every edge term $e_{j_{\ell}}$ of $\bw_D$, $e_{j_{\ell}}$ is in some $(2s)$th section if $b_{e_{j_{\ell}}}>0$,  and  $e_{j_{\ell}}$ is in some $(2s+1)$th section if $b_{e_{j_{\ell}}}<0$, which completes the proof.
\end{proof}

\subsection{Proof of Proposition~\ref{prop:rank}}

\begin{lemma}\label{rmk:odd:cycle}
For an odd-signed closed walk $\bw$,
$[\bw]$ contains a cycle of $G$, which is odd-signed in $(G,\tau)$. \end{lemma}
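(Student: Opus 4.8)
The plan is to argue by induction on the length of the odd closed walk $\bw$, repeatedly peeling off a strictly shorter \emph{odd} closed walk until only a genuine cycle remains. The whole argument rests on the multiplicativity of the sign $\mu$ under concatenation (Lemma~\ref{eq:sign:product:closed}) together with the observation, recalled in Subsection~\ref{subsec:basic:graph:walks}, that a closed walk with a repeated vertex splits into two closed walks.

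First I would settle the base case by ruling out short degenerate walks. Since $G$ is simple it has no loops, so every closed walk has length at least two; and a closed walk of length two necessarily has the form $v_1e_1v_2e_1v_1$, whose two vertex terms $v_1$ and $v_2$ are both unbalanced (each contributes $\tau(e_1,\cdot)^2=1$), so it is even. Hence every odd closed walk has length at least three. Now if $\bw$ has no repeated vertex, its vertex terms $v_1,\dots,v_t$ are distinct and, as $t\ge 3$, the consecutive edge terms are distinct as well, so $\bw$ is a cycle; its underlying graph is a cycle of $G$ because the underlying simple graph of $[\bw]$ is a subgraph of $G$, and it is odd by hypothesis. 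This is exactly the desired conclusion.

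For the inductive step, suppose $\bw$ has a repeated vertex. Then $\bw$ admits a nontrivial section-decomposition $\bw=\bw_0+\bw_1$ into two closed walks. By Lemma~\ref{eq:sign:product:closed}, $\mu(\bw_0)\mu(\bw_1)=\mu(\bw)=-1$, so exactly one of $\bw_0,\bw_1$ is odd; say $\bw_0$ is odd. Because the decomposition is nontrivial, $\bw_0$ is strictly shorter than $\bw$, and the induction hypothesis yields a cycle of $G$ in $[\bw_0]$ that is odd in $(G,\tau)$. Finally, since $\bw_0$ is a section of $\bw$ we have $V(\bw_0)\subseteq V(\bw)$ and $E(\bw_0)\subseteq E(\bw)$ as a submultiset, so $[\bw_0]$ is a sub-multigraph of $[\bw]$ and this odd cycle also lies in $[\bw]$, closing the induction.

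I do not anticipate a genuine obstacle here: the only point needing care is the parity bookkeeping, which is handled cleanly once Lemma~\ref{eq:sign:product:closed} forces one of the two pieces to remain odd, and the verification that a length-two closed walk is even, which guarantees the descent terminates at an actual cycle rather than at a doubled edge.
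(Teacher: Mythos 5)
Your proof is correct and follows essentially the same route as the paper: induction on the length of $\bw$, splitting at a repeated vertex into $\bw_0+\bw_1$ and using the multiplicativity $\mu(\bw_0)\mu(\bw_1)=\mu(\bw)$ from Lemma~\ref{eq:sign:product:closed} to keep an odd piece. Your only departure is cosmetic but welcome: where the paper dismisses walks of length at most three as trivial, you explicitly check that a length-two closed walk $v_1e_1v_2e_1v_1$ has both vertex terms unbalanced and hence is even, which is precisely what guarantees the descent terminates at a genuine cycle of $G$ rather than a doubled edge.
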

\begin{proof}
We show it by the induction on the length of the walk.
If it has length at most three, then it is trivial.
Suppose that the lemma holds for any odd-signed walk of length less than $\ell$ ($\ell>3$). Let $\bw$ be an odd-signed walk of length $(\ell+1)$.
If there is no repeated vertex in $\bw$, then $[\bw]$ is a cycle.
Suppose that there is a repeated vertex in $\bw$.
If $v$ is repeated in $\bw$, then we let $\bw=\bw_0+\bw_1$ so that each $\bw_{i}$ is a closed nontrivial walk with first vertex term $v$, and then we have $-1=\mu(\bw)=\mu(\bw_0)\mu(\bw_1)$ by Lemma~\ref{eq:sign:product:closed}, which implies that one of $\bw_0$ and $\bw_1$ is an odd-signed closed walk, say $\bw_0$.
Since $\bw_0$ is a proper subwalk of $\bw$, by the induction hypothesis, $[\bw_0]$ contains a cycle which is  odd-signed in $(G,\tau)$ and so $[\bw]$ does.
\end{proof}

\begin{proof}[Proof of Proposition~\ref{prop:rank}] Let $A=A(G,\tau)$ and $A(e)$ be the column of $A$ corresponding to an edge $e$. Take a spanning tree $T$ of $G$. Then clearly, the submatrix obtained by the columns corresponding the edges of $T$ has the rank $|V(G)|-1$. Thus,
$\mathrm{rank}(A)\ge |V(G)|-1$.

Suppose to contrary that $(G,\tau)$ contains no odd-signed closed walk and there are $|V(G)|$ linearly independent columns. Let $A'$ be the submatrix inducted by those columns.
Then the subgraph of $G$ induced by the edges corresponding to  the columns of $A'$ has $|V(G)|$ edges and so it contains a cycle $C$.
By the assumption that $(G,\tau)$ has no odd-signed cycle, $C$ is an even-signed cycle and we let $C$ have a balanced section-decomposition $\bw_0+\cdots+\bw_{r}$.
Without loss of generality, we assume that $\bw_i:v^{i}_{1} e^i_{1} \cdots v^i_{a_i} e^i_{a_i} v^i_{a_i+1}$ for each $i$.
If $r=0$, then
it is easy to see that $\sum_{e\in E(C)} A(e)=\mathbb{0}$, a contradiction to the fact that
 $\{A(e)\mid e\in E(C)\}$ are linearly independent.
Suppose that $r>0$. Then each $v^i_1$ is unbalanced.
Since $v^i_{a_i+1}=v^{i+1}_{1}$,
$ \tau(e^i_1,v^i_{a_i+1})=\tau(e^{i+1}_{1},v^{i+1}_{1})$.
Then it follows that \[\sum_{i=0}^{r} (-1)^{i}\sum_{j=1}^{a_i} A(e^i_j)=\mathbb{0}.\]
This is a contradiction to the fact that
 $\{A(e)\mid e\in E(C)\}$ are linearly independent.

Suppose that $(G,\tau)$ contains an odd-signed closed walk $\bw$, and then  $(G,\tau)$ contains an odd-signed cycle $C$ by Lemma~\ref{rmk:odd:cycle}.
We take a unicyclic spanning subgraph $H$ of $G$, containing the cycle $C$. Let $A'$ be the submatrix of $A$ induced by the columns corresponding to the edges of $H$. Note that $A'$ is an $|V(G)|\times |V(G)|$ matrix and by permuting lines, we may assume that
$A'=\left[\begin{array}{cc}
A_{11} & A_{12} \\
O & A_{22} \\
\end{array}
\right]
$ where $A_{ii}$'s are square matrices, the rows and the columns corresponding to $A_{11}$ are the vertices and the edges of the cycle $C$,
 and $A_{22}$ is a upper triangular matrix without zero diagonal element.
Since $\det(A')=\det(A_{11})\det(A_{22})$ and $\det(A_{22})\neq 0$, it is sufficient to show that the columns of the submatrix $A_{11}$ are linearly independent.

Let $C:v_1e_1v_2\ldots v_re_rv_1$.
Suppose that $\sum_{i=1}^{r} c_iA(e_i)=\mathbb{0}$ for some constant $c_i$.
Since the entry of $\sum_{i=1}^{r} c_iA(e_i)$ corresponding to a vertex $v_j$ ($j\in[r]$) is equal to $c_{j-1} \tau(e_{j-1},v_j) + c_{j} \tau(e_{j},v_j)$, where the indices are modulo $r$,
we have $c_{j-1} \tau(e_{j-1},v_j) + c_{j} \tau(e_{j},v_j)=0$.
If a vertex term $v_j$ of $C$ is not unbalanced
then $c_{j-1}=c_j$, and if a vertex term $v_i$ is unbalanced then $c_{j-1}=-c_j$.
Since $C$ has an odd number of unbalanced vertex terms, then it follows that $c_1=c_2=\cdots=c_r=0$.
\end{proof}

\subsection{Proof of Proposition~\ref{lem:that_lemma}}
\begin{figure}[h!]
 \centering
 \includegraphics[width=16cm,page=26]{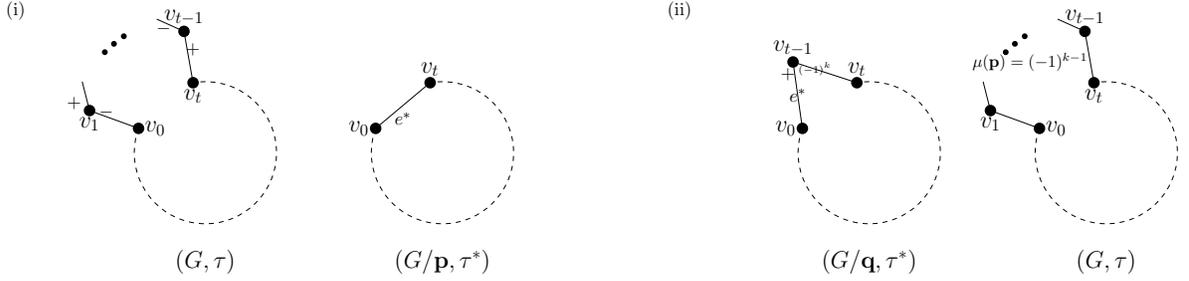}\\
  \caption{An illustration for the proof of Proposition~\ref{lem:that_lemma}}
  \label{fig:ear}
\end{figure}

\begin{proof}[Proof of Proposition~\ref{lem:that_lemma}] Note that $G/\mathbf{p}$ or $G/\mathbf{q}$ in the cases is a simple graph.
For every sign $\tau$ of $G$, every primitive walk $\bw$ in $(G,\tau)$ containing an edge of the path $\mathbf{p}$ contains all edges of $\mathbf{p}$ by Theorem~\ref{thm:signed:primitive}.
In the following, let $G^*$ be $G/\mathbf{p}$ or $G/\mathbf{q}$ (according to the cases), and let $e^*$ be its newly added edge.
To show (i), suppose that $G\in \mathcal{G}^{cis}$. Take a sign $\tau^*$ of $G^*$, and define a sign $\tau$ of $G$ as follows, and see Figure~\ref{fig:ear}.
\[\tau(e,y)=
\begin{cases}
\tau^*(e,y) &\text{if }e\in E(G)\setminus E(\mathbf{p}),\\
\tau^*(e^*,v_0)&\text{if }(e,y)=(v_0v_1,v_0),\\
\tau^*(e^*,v_t)&\text{if }(e,y)=(v_{t-1}v_{t},v_t),\\
1&\text{if }(e,y)=(v_iv_{i+1},v_i) \text{ for some }i\in[t-1],\\
-1&\text{if }(e,y)=(v_{i-1}v_{i},v_i) \text{ for some }i\in[t-1].
\end{cases}\]
For a closed walk $\bw$,
$\bw$ is even-signed in $(G,\tau)$ if and only if the walk $\bw^*$ obtained from $\bw$ by contracting the ear $\mathbf{p}$ is even-signed in $(G^*,\tau^*)$.
Thus, $r(G,\tau)=r(G^*,\tau^*)$ and there are $s$ primitive binomials of $I_{(G,\tau)}$ generating  $I_{(G,\tau)}$ if and only if there are $s$ primitive binomials of $I_{(G^*,\tau^*)}$ generating  $I_{(G^*,\tau^*)}$.
Therefore, $G/\mathbf{p}\in \mathcal{G}^{cis}$ since $G\in\mathcal{G}^{cis}$.

To show (ii), take a sign $\tau$ of $G$.
Consider the balanced section-decomposition of $\mathbf{p}$, and let $k$ be the number of balanced sections of $\mathbf{p}$.
Define a sign $\tau^*$ of $G^*$ as follows, and see Figure~\ref{fig:ear}.
\[\tau^*(e,y)=
 \begin{cases}
\tau(e,y) &\text{if }e\not\in\{e^*,v_{t-1}v_t\}, \\
\tau(v_0v_1,v_0)&\text{if }(e,y)=(e^*,v_0),\\
\tau(v_{t-1}v_t,v_t)&\text{if }(e,y)=(v_{t-1}v_{t},v_t),\\
1&\text{if }(e,y)=(e^*,v_{t-1}), \\
(-1)^k &\text{if }(e,y)=(v_{t-1}v_{t},v_{t-1}).
\end{cases}\]
For a closed walk $\bw$,
$\bw$ is even-signed in $(G,\tau)$ if and only if the walk $\bw^*$ obtained from $\bw$ by contracting the ear $\mathbf{q}$ is even-signed in $(G^*,\tau^*)$.
Similar to the argument of (i),  we have  $G\in\mathcal{G}^{cis}$ since $G/\mathbf{q}\in \mathcal{G}^{cis}$.
\end{proof}

\end{document}